\newtheorem{prop}{Proposition}[section]
\newtheorem{lemma}[prop]{Lemma}
\newtheorem{theo}[prop]{Theorem}
\newtheorem{coro}[prop]{Corollary}
\numberwithin{equation}{section}
\theoremstyle{remark}
\newtheorem{rmq}{Remark}
\newcommand{\di}{\displaystyle}
\newcommand{\mH}{\mathcal{H}}
\newcommand{\R}{\mathbb{R}}
\newcommand{\N}{\mathbb{N}}
\title{Existence of multi-traveling waves in capillary fluids}
\author{Corentin Audiard
\footnote{Sorbonne Universit\'es, UPMC Univ Paris 06, UMR 7598, Laboratoire Jacques-Louis Lions, 
F-75005, Paris, France }
\footnote{CNRS, UMR 7598, Laboratoire Jacques-Louis Lions, F-75005, Paris, France}}
\begin{document}
\maketitle

\begin{abstract}
We prove the existence of multi-soliton and kink-multi-soliton solutions of the 
Euler-Korteweg system in dimension one. Such solutions behaves asymptotically in time 
like several traveling waves far away from each other.
A kink is a traveling wave with different 
limits at $\pm\infty$.
The main  assumption is the linear stability of the solitons, and we prove that this 
assumption is satisfied at least in the transonic limit.
The proof relies on a classical approach based on energy estimates 
and a compactness argument. 
\end{abstract}
\tableofcontents
\section{Introduction}
\paragraph{The Euler-Korteweg model}
The Euler-Korteweg equations read 
\begin{equation}\label{EK}
\left\{
\begin{array}{lll}
\partial_t\rho+\partial_x(\rho v)&=&0,\\
\partial_t v+v\partial_xv+g'(\rho)\partial_x\rho&=&\partial_x\bigg(K(\rho)\partial_x^2\rho
+\frac{1}{2}K'(\rho)(\partial_x\rho)^2\bigg),
\end{array}\right.
(x,t)\in\R\times \R^+.
\end{equation}
They are a modification of the usual Euler equations that model capillary forces in non viscous 
fluids. The function $K(\rho)$ is supposed to be smooth $\R^{+*}\to \R^{+*}$. In some 
relevant cases it is not bounded near $0$, in particular for $K=1/\rho$ there exists a
change of variable, the Madelung transform, that converts at least formally solutions of 
\eqref{EK} into solutions of the nonlinear Schr\"odinger equation (for details on this interesting 
feature see the review article \cite{CDS}).\\
There is a formally conserved energy 
\begin{equation*}
 H[\rho,v]=\int_{\R}\frac{1}{2}\big(\rho v^2+K(\rho)(\partial_x\rho)^2\big)+G(\rho)dx,
\end{equation*}
where $G$ is a primitive of $g$, and under appropriate functional settings, 
denoting $\delta H$ the variational derivative of $H$, $\di V=\begin{pmatrix}\rho\\ v\end{pmatrix}$
\eqref{EK} can be viewed as a hamiltonian system 
\begin{equation}\label{EKHam}
\partial_t V
=J\partial_x \delta H[V],\text{ with }
J=\begin{pmatrix} 0 & -1 \\ -1 & 0\end{pmatrix}.
\end{equation}
\eqref{EK} also has a formally conserved momentum $P(\rho,v)=\int_{\R}\rho v$, whose conservation 
is related to the identity $\delta P[V]=-JV$. Although formal 
these identities are used at least as notations in this article.\\
Due to the intricate quasilinear nature of \eqref{EK}, only local well-posedness 
was obtained so far in dimension one and, even for data close to 
a constant state, global well-posedness is an open problem
 (on well-posedness and stability in larger 
dimension, see also \cite{Benzoni1}, \cite{AudHasp2}, \cite{GiesTza}). \\
It was proved in \cite{BDD2} by ODE technics that \eqref{EK} admits traveling waves as 
solutions, namely solutions of the form $(\rho,v)(x-ct)$. There exists two classes 
of traveling waves: those such that $\di \lim_{+\infty}(\rho,v)(z)\neq 
\lim_{-\infty}(\rho,v)(z)$ are labelled as kinks, while solitons satisfy 
$\di \lim_{+\infty}(\rho,v)(z)=\lim_{-\infty}(\rho,v)(z)$. No quantity is assumed to be 
zero at infinity. \\
Both types of traveling waves are physically relevant, especially kinks are supposed to model 
phase transition in capillary fluid (e.g. liquid to vapor).
While kinks are known to be always stable, solitons are not and a (conditional) 
stability criterion in the spirit of \cite{GSS} was derived in \cite{BDD2}.\\
This article is devoted to a related, yet different issue : the 
existence of multi-traveling waves, i.e. solutions that decouple as $t\to +\infty$ 
to a sum of traveling waves. 
\paragraph{Multi-traveling waves in the litterature}
The existence of multiple traveling waves is now a classical topic. While the first 
examples came from the field of integrable equations (for example, 
see the pioneering work of Zakharov-Shabat \cite{ZakShab}), flexible and powerful methods 
have since been developed to tackle non-integrable equation. In particular considerable 
progress was achieved for the KdV and nonlinear Schr\"odinger equations over the 
last twenty years.\\ In the framework of the 
nonlinear Schr\"odinger equation we refer to the work of Martel, Merle and coauthors 
\cite{MartelMerle06, MartelMerle11}, in particular based the use of 
modulation parameters and a compactness argument, 
see also Le Coz and Tsai \cite{LeCozTsai14} for an approach based on dispersive estimates.
To the best of our knowledge, the inclusion of a kink in the asymptotic profile is a rather 
rare feature in the field of multiple traveling waves, to the noticeable exception of the work 
of Le Coz-Tsai \cite{LeCozTsai14}, see also \cite{LinTsai17} in higher dimension.\\
All those results share the fact that they are more conveniently applied to equations that have 
a ``good'' well-posedness theory available (existence of global solutions in not too restrictive 
spaces). As such, their adaptation to quasilinear systems like the Euler-Korteweg system 
raises some difficulties. 
To explain it roughly, a key step of the compactness argument of Martel-Merle requires 
the existence of solutions for $t\in \R^+$, while the well-posedness 
theory for the Euler-Korteweg system only allows existence in finite time. \\
In the context of the water waves, this difficulty was overcome by Ming-Rousset-Tzvetkov 
\cite{MRT15} with the construction of global \emph{approximate} solutions with some 
fast decay in time on the approximation error. 
This is also, to some extent, the approach that we follow here.

\paragraph{The traveling waves}
A short description of the construction of traveling waves is provided in the appendix, for more 
details we refer to \cite{BDD2}. Their main features are the following:
\begin{itemize}
\item A traveling wave is a solution of \eqref{EK} of the form $V(x-ct)$, $c$ is its speed.
 \item All the traveling waves that we consider are smooth and bounded. Their 
 derivatives are exponentially decreasing 
 at $\pm\infty$. Consequently all traveling waves have limits at $\pm\infty$.
 \item Kinks are travelling waves that have different endstates 
 at $\pm\infty$, say $(\rho_\pm,u_\pm)$.
 The function $\rho$ is monotonous.
 \item Solitons are traveling waves with same endstates at $\pm\infty$. $\rho$ changes monotony 
 only once, when it reaches its unique extremum. In the appendix we only deal with the case  where this extremum is a minimum. By analogy with the Schr\"odinger 
 equation (see e.g. \cite{Barashenkov}), we label such solutions 
 bubbles (note that for fluids such solutions correspond to a negative bump in the 
 density, therefore the word bubble is consistent).
 \item For a fixed endstate solitons can be smoothly parametrized by 
their speed. 
\end{itemize}

\paragraph{Main result}
Let $(V^{c_j})_{1\leq j\leq n}$ be travelling waves with ordered speeds $c_j<c_{j+1}$. 
We assume that all $V^{c_j}$ are stable, and we consider one of the following two cases: 
\begin{itemize}
 \item $V^{c_1}$ is a kink, and $V^{c_j}$ are solitons with 
 $\di \forall\,j\geq 2,\ \lim_{\pm\infty}V^{c_j}=\lim_{+\infty}V^{c_1}$.
 \item $V^{c_j}$ are all solitons with the same endstate.
\end{itemize}
For $V^c$ a soliton, we denote $\di U^c=V^c-
\begin{pmatrix}\rho_+\\v_+\end{pmatrix}$, and 
define the rescaled momentum 
\begin{equation}
 \label{momentum}
P(V^c)=P(\rho^c,v^c)=\int_{\R} (\rho^c-\rho_+)(v^c-v_+)dx.
\end{equation}
We assume that the solitons are stable in the following sense
\begin{equation*}
\text{Stability condition: } \frac{d}{dc}\int (\rho^c-\rho_+)(v^c-v_+)dx<0.
\end{equation*}
We refer to the appendix \ref{appendix} for the proof that our
conditions can be met, where we also show that this stability criterion coincides with 
the one derived in \cite{BDD2}.\\
We define the multi-soliton 
$$S(x,t)=V^{c_1}(x-c_1t)+\sum_{k=2}^nU^{c_k}(x-c_kt-\sum_{j=2}^kA_j),$$ $A_j\geq A$ a 
large constant to choose later.
Our main result is that there exists a solution which converges to $S$ as $t\to \infty$ (see section \ref{not} for the definition 
of $\mathcal{H}^n$).
\begin{theo}\label{mainresult}
For $A_0$ large enough, $A\geq A_0$, and $n\geq 3$, 
there exists a global  solution of \eqref{EK} such that 
$V-S\in C(\R^+,\mathcal{H}^{2n})$ and 
\begin{equation*}
\lim_{t\to \infty} \|V(t)-S(t)\|_{\mathcal{H}^{2n}}\to 0.
\end{equation*}
\end{theo}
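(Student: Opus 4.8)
\emph{Proof proposal.} I would follow the compactness scheme of Martel--Merle, combined with the globally defined approximate solutions introduced by Ming--Rousset--Tzvetkov \cite{MRT15} to circumvent the fact that \eqref{EK} is only known to be locally well-posed. First I would record that $S$ is already a good approximate solution: each $V^{c_j}(\cdot-c_jt)$ solves \eqref{EK} exactly, and since the speeds are distinct and the shifts large, the centres $c_kt+\sum_{j\le k}A_j$ are pairwise separated by at least $A+\big(\min_j(c_{j+1}-c_j)\big)t$; as the traveling waves have exponentially decaying derivatives, the residual produced by inserting $S$ into \eqref{EK}, and all nonlinear cross terms between distinct profiles, are bounded in $\mH^{2n}$ by $C\,e^{-\delta A}e^{-\delta t}$ for a suitable $\delta>0$. (Should the $\mH^{2n}$ bookkeeping demand it, I would first replace $S$ by a corrected approximate solution with the same endstates and a residual of arbitrarily high order in $e^{-\delta t}$, by a routine fixed-point construction.)

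Next, for each large $T$ I would solve \eqref{EK} backward in time from the terminal data $V_T(T)=S(T)$ --- using the local Cauchy theory together with the time reversibility $(\rho,v)(x,t)\mapsto(\rho,-v)(x,-t)$ of \eqref{EK} --- and denote by $[\tau_T,T]$ the maximal interval on which $\|V_T(t)-S(t)\|_{\mH^{2n}}\le\varepsilon_0$ for a small fixed $\varepsilon_0$. The core of the argument is the uniform estimate
\[
\|V_T(t)-S(t)\|_{\mH^{2n}}\le C\,e^{-\delta A/2}\,e^{-\delta t/2},\qquad t\in[\tau_T,T],
\]
with $C,\delta$ independent of $T$ and of $A\ge A_0$; a continuity argument then forces $\tau_T=0$ once $Ce^{-\delta A_0/2}<\varepsilon_0/2$, so that the estimate holds on all of $[0,T]$. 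On this interval $V_T(t)$ stays near the $n$-soliton manifold, and I would introduce translation parameters $y_j(t)$ (and, as convenient, the remaining symmetry parameters of \eqref{EK}) by imposing that $V_T(t)-S_{\vec y}(t)$ be orthogonal to the kernel directions $\partial_xV^{c_j}(\cdot-c_jt-y_j)$; differentiating these relations and using the equation gives $|\dot y_j(t)|\lesssim\|V_T(t)-S_{\vec y}(t)\|_{\mH}+e^{-\delta A}e^{-\delta t}$, so the $y_j$ stay $O(1)$.

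The Lyapunov functional I would use is
\[
\mathcal E(t)=\sum_{j=1}^n\big(H_j[V_T](t)+c_j\,P_j[V_T](t)\big)+\mathcal E_{\mathrm{high}}(t),
\]
with $H_j$ and $P_j$ the Hamiltonian $H$ and momentum $P$ of \eqref{EK} localized near the $j$-th wave by a partition of unity transported with the solitons, and $\mathcal E_{\mathrm{high}}$ a gauged higher-order energy built from the good unknowns of the quasilinear theory for \eqref{EK} (cf. \cite{BDD2}, \cite{AudHasp2}) that raises the control from $\mH^1$ to $\mH^{2n}$. Up to the modulation parameters and the (almost conserved, hence nearly constant) localized momenta, $\mathcal E(t)$ should be equivalent to $\|V_T(t)-S_{\vec y}(t)\|_{\mH^{2n}}^2$; the lowest-order part of this equivalence is precisely the coercivity of the linearized action $H''(V^{c_j})+c_jP''(V^{c_j})$ on the subspace orthogonal to $\partial_xV^{c_j}$ with the momentum frozen, which is what the stability condition $\frac{d}{dc}P(V^c)<0$ provides through the Grillakis--Shatah--Strauss argument recalled in the appendix; for the kink $V^{c_1}$ this coercivity is unconditional. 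Differentiating $\mathcal E$, the non-sign-definite contributions are of three types --- interactions between distinct waves, which are $O(e^{-\delta A}e^{-\delta t})$ by spatial separation; errors from the time derivatives of the cutoffs and from the residual of $S$, again $O(e^{-\delta A}e^{-\delta t})$; and terms of the form $(\dot y_j+\text{modulation error})\times O(\|V_T-S_{\vec y}\|_{\mH^{2n}}^2)$, cubic in the perturbation --- so that $\big|\frac{d}{dt}\mathcal E(t)\big|\lesssim e^{-\delta A}e^{-\delta t}+\mathcal E(t)^{3/2}$. Since $\mathcal E(T)=0$, integrating from $t$ to $T$ and using the bootstrap hypothesis $\mathcal E\le C\varepsilon_0$ closes the estimate and yields $\mathcal E(t)\lesssim e^{-\delta A}e^{-\delta t}$ on $[0,T]$, i.e. the uniform bound above.

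Finally I would pass to the limit: $\{V_T(0)\}_T$ is bounded in $\mH^{2n}$ (indeed within $Ce^{-\delta A/2}$ of $S(0)$), so along a subsequence $V_{T_k}(0)\rightharpoonup V_0$ weakly-$\ast$ in $\mH^{2n}$ and strongly in $\mH^{2n-1}_{\mathrm{loc}}$; solving \eqref{EK} forward from $V_0$, continuous dependence identifies the resulting $V$ with the locally uniform limit of the $V_{T_k}$, the uniform estimate passes to the limit, $\|V(t)\|_{\mH^{2n}}$ stays bounded so $V$ is global on $\R^+$ by the blow-up criterion, $V-S\in C(\R^+,\mH^{2n})$, and $\|V(t)-S(t)\|_{\mH^{2n}}\lesssim e^{-\delta A/2}e^{-\delta t/2}\to0$. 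The hardest part, I expect, is everything involving $\mathcal E_{\mathrm{high}}$: because \eqref{EK} has no global flow and no space of global well-posedness, the whole scheme must run on $[0,T]$ with constants independent of $T$, which forces the Grillakis--Shatah--Strauss coercivity of the linearized action to be married to the full strength of the gauged higher-order energy estimates of the quasilinear theory while keeping all interaction and cut-off errors exponentially small in time; the restriction $n\ge3$ should be a by-product of the regularity loss in these quasilinear estimates against the index $2n$ of $\mH^{2n}$.
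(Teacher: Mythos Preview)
Your overall scheme --- approximate solution close to $S$, backward-in-time solutions $V_T$ with terminal data on the profile, uniform bound independent of $T$, compactness --- is exactly the paper's. The genuine difference is in how the uniform backward bound is obtained.

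The paper does \emph{not} use a Lyapunov functional, modulation parameters, or coercivity in the backward estimate. It simply runs the raw quasilinear energy estimate of Proposition~\ref{mainenergy},
\[
\bigg|\frac{d}{dt}\widetilde{\|}V_T-V^a\|_{\mathcal{H}^{2n}}\bigg|\le C\big(\widetilde{\|}V_T-V^a\|_{\mathcal{H}^{2n}}+\|f^a\|_{\mathcal{H}^{2n}}\big),
\]
where $C$ is \emph{not} small (it depends on the full $\mathcal{H}^{2n}$ size of the solitons). The key is that $V^a$ is produced by a Newton iteration (Section~\ref{secnewton}) so that $\|f^a(t)\|_{\mathcal{H}^{2n}}\le\varepsilon e^{-C_et}$ with $C_e$ chosen \emph{after} $C$ and taken $\ge 2C$; backward Gronwall from $t=T$ then gives $\|V_T(t)-V^a(t)\|_{\mathcal{H}^{2n}}\lesssim\varepsilon e^{-C_et}$ uniformly. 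All the spectral/coercivity input you describe is spent earlier, in Section~\ref{seclin}, solely to bound the resolvent of $J\partial_x\delta^2H[S+\eta]$ by $(1+|t-s|)e^{C\varepsilon^{1/4}|t-s|}$, which is what makes the Newton scheme converge and lets $C_e$ be arbitrary.

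Your route instead tries to get a small (or zero) exponential rate directly at the nonlinear level via the localized action plus a gauged high-order piece. The differential inequality you write, $|\dot{\mathcal E}|\lesssim e^{-\delta A}e^{-\delta t}+\mathcal E^{3/2}$, is the point I would flag: in this quasilinear setting, differentiating any gauged high-order energy around $S$ produces a linear term $C_A\,\mathcal E$ as well (compare the $e^{C\varepsilon^{1/4}|t-s|}$ growth in Theorem~\ref{estimN}, which persists even at the linear level and even after the kernel direction is removed). One may expect $C_A\to 0$ as $A\to\infty$, in which case your bootstrap still closes once $\delta>2C_A$, but proving this requires essentially the full machinery of Section~\ref{seclin} transported to the nonlinear level --- precisely the ``marriage'' you single out as hardest. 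The paper's decoupling (crude nonlinear Gronwall with an $O(1)$ rate, compensated by an approximate solution with arbitrarily faster decay) sidesteps that marriage entirely, at the price of the Newton construction; your approach, if the linear term is handled, would be closer to a genuine nonlinear-stability argument and would also carry modulation information.
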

\begin{rmq}
It may be tempting to think that theorem \ref{mainresult} hints
towards the stability of multi-solitons. This is not 
correct as the solution constructed is quite peculiar: it is a pure 
soliton solution with no dispersive part. 
For NLS multi-solitons 
have been constructed in cases where each soliton is 
unstable\cite{MartelMerle11, Combet14}.\\
Note however that in the case of the Gross-Pitaevskii equation, whose hydrodynamics
formulation is a special case of \eqref{EK} with $K=1/\rho$, $g=\rho-1$, nonlinear 
stability of multi-solitons was obtained by B\'ethuel-Gravejat-Smets \cite{BGSchain}.
It is expectable that a similar result holds (at least in some regime) for 
\eqref{EK}, however, due to the lack of global well-posedness, going beyond 
\emph{conditional stability}, that is stability until blow-up, requires significant 
new ideas.
\end{rmq}
\begin{rmq}
It is apparent from the proof that multiple traveling waves can be constructed in more complicated configurations, such as kink-soliton-kink, soliton-kink-kink etc. 
We chose not to aim at such results to keep a reasonably simple proof, and because 
configurations with multiple kinks and stable solitons might require very exotic presssure laws to 
exist.
\end{rmq}

%
\paragraph{Scheme of proof} The key is to construct an approximate solution $V^a$ to 
\eqref{EK} that satisfies 
\begin{equation*}
 \partial_t V^a-J\partial_x\delta H[V^a]=f^a,
\end{equation*}
which is defined globally, converges as $t\to \infty$ to the multi-soliton,
and such that the error term $f^a$ decays rapidly in time.
Once $V^a$ is constructed, we use the local well-posedness theory with some improved energy 
estimates to construct a sequence of exact solutions $V^k$ close to $V^a$,
defined on $[0,k]$ with $V^k(k)=U^a(k)$.
A compactness argument then provides a global solution of the Euler-Korteweg system which converges
at $t\to \infty$ to the multi-soliton.\\
The construction of $V^a$ is quite intricate, it requires fine estimates on the flow generated by 
the linearized operator $J\partial_x\delta^2H[S]$, building upon a spectral decomposition of 
$\delta^2H[V^{c_j}]-c_j\delta^P$. Once these estimates are proved, the approximate solution is constructed by a Newton iteration method applied to the problem 
$\partial_tV-J\partial_x\delta H[V]=0$.

\paragraph{Plan of the article} In section \ref{not} we define some notations and 
functional settings. The energy estimate for \eqref{EK} are proved in section \ref{secenergie}.\\
Section \ref{seclin} is the core of the article. We first give a convenient spectral 
decomposition of the operator $\delta^2E[V^{c_j}]-c_j\delta^2P$. We deduce some 
estimates on the flow of $J\partial_x\delta^2H$ that are not useful for this paper, but contain 
most of the ideas for the much more technical estimates on the flow of $J\partial_x\delta^2H[S]$.\\
With these estimates at hand we construct in section \ref{secnewton} an approximate solution by 
following Newton's iteration method. The compactness argument that provides the multi-soliton
solution is detailed in section \ref{secpreuve}. \\
Finally, as the existence of a ``kink-stable solitons'' configuration is not obvious, we prove it 
in the appendix. The appendix is also used to recall how kinks and solitons for \eqref{EK} are 
constructed.

\paragraph{Acknowledgement}
The author was supported by the French National Research Agency project NABUCO,
grant ANR-17-CE40-0025.\\
The author thanks Stefan Le Coz for pointing out some useful references.

\section{Notations, functional spaces}\label{not}
\textbf{Reference state of a solution} Any solution $V$ of \eqref{EK} that we consider is of the form 
\begin{equation}\label{notref}
V=V_{\text{ref}}+U,
\end{equation}
where $U$ vanishes at infinity, $V_{\text{ref}}$ is a reference state which is a 
smooth function with finite limit at $\pm\infty$, and for any $k+j\geq 1$, 
$\partial_x^k\partial_t^jV_{\text{ref}}$ decays exponentially at $\pm \infty$. \\
The notation $V=V_{\text{ref}}+U$ will be used without explanation
when the context is clear, in particular for a soliton of endstate 
$(\rho_+,v_+)$ we always take $V_{\text{ref}}=(\rho_+,v_+)^t$. 
If any sub/superscript is present we denote
$V^a=V_{\text{ref}}^a+U^a,\ V_j=V_{\text{ref},j}+U_j$ etc. 
\\
We always denote $V=\begin{pmatrix}\rho\\ v \end{pmatrix}$, $U=\begin{pmatrix}r\\ u \end{pmatrix}$, and similarly for $V^a,U^a$...
\paragraph{Symbols and conventions of computation}
The constant $C$ in inequalities $A\leq CB$ changes from line to 
line. Depending on the context, they are allowed to depend on some 
quantities, but for conciseness this dependency 
is not explicited. For example when proving $A\leq 
C(\|u\|_{\infty})B$, we write freely $|uv|\leq C|v|$.\\
The inequality $A\lesssim B$ means $A\leq CB$ for some constant
$C>0$, where the previous rule applies to $C$.\\
The $L^2$ scalar product for real vector valued functions is denoted 
$\langle\,\cdot,\,\cdot\rangle$.
\paragraph{Sobolev spaces} Even for functions of one variable, we use 
the notation $u'=\partial_xu$. 
$H^n$ is the usual $L^2$ based Sobolev space 
$$H^n=\{u\in \mathcal{S}':\ \forall\,0\leq k\leq n,\ 
\partial_x^ku\in L^2\},\ \|u\|_{H^n}^2=\sum_0^n\|\partial_x^ku\|_{L^2}^2.$$ 
We denote $\mathcal{C}^n_b$ the set of $n$ times differentiable 
functions that are bounded as well as their derivatives.
For a vector valued distribution $U=\begin{pmatrix}r\\u\end{pmatrix}$, we also define 
\begin{equation*}
\|U\|_{\mathcal{H}^n}^2=\|r\|_{H^{n+1}}^2+\|u\|_{H^n}^2,\text{ and }
\|U\|_{X^n}=\|U\|_{\mathcal{H}^{n+1}}
+\|\partial_tU\|_{\mathcal{H}^n}.
\end{equation*}
We have the interpolation property 
\begin{equation*}
\forall\, 0\leq k\leq n,\ \|u\|_{H^k}\leq \|u\|_{L^2}^{1-k/n}\|u\|_{H^n}^{k/n},
\end{equation*}
the continuous embedding $H^n\subset \mathcal{C}_b^{n-1}$. For $n\geq 1$, $H^n$
is a Banach algebra.\\
The following composition composition rules hold for 
$a\in \mathcal{C}_b^n+H^n,\ u\in H^n$ and $F$ smooth
on $\text{Im}(a),\ \text{Im}(a+u)$:
\begin{eqnarray}\label{calcsobo}
\|F(a+u)-F(a)\|_{H^n}\leq C(\|a\|_{\mathcal{C}^n_b+H^n}+\|u\|_{H^n})\|u\|_{H^n},\\
\nonumber
\text{ in particular if }F(0)=0,\  
\|F(u)\|_{H^n}\leq C(\|u\|_{H^n})\|u\|_{H^n}.
\end{eqnarray}
A similar second order rule holds
\begin{equation}\label{calcsobo2}
\|F(a+u)-F(a)-uF'(a)\|_{H^n}\leq C(\|a\|_{\mathcal{C}^n_b+H^n}
+\|u\|_{H^n})\|u\|_{H^n}^2.
\end{equation}
Both are consequences of a combination of the Faa Di Bruno formula, Sobolev's embedding, H\"older's inequality and Taylor's formula.

\section{Energy estimates}\label{secenergie}
An essential step is to bound the distance between an exact solution and a smoother approximate solution
$V^a=(\rho^a,v^a)$ satisfying 
\begin{equation*}
 \partial_tV^a=J\partial_x\delta H[V^a]+f^a, \text{ for some remainder }f^a.
\end{equation*}
Due to the quasi-linear nature of the system the flow map is (probably) not Lipschitz even in high regularity Sobolev 
spaces, nevertheless Lipschitz bounds with harmless loss of derivatives on $V^a$ can be obtained.\\
Energy estimates were obtained by Benzoni et al \cite{BDD} thanks to a change of variable (initially due to 
F. Coquel), and this section is actually more or less contained in \cite{BDD}.
Let us shortly describe the argument : if $(\rho,v)$ is a smooth
solution of \eqref{EK} without vacuum, $n\geq 2$, set 
$w=\sqrt{K/\rho}\partial_x \rho$, and $z=v+iw$. Then $z$ satisfies 
\begin{equation}
 \partial_tz+v\partial_x z+iw\partial_x z+i\partial_x\big(a\partial_x z\big)+g'(\rho)\partial_x\rho=0,
\end{equation}
with $a(\rho)=\sqrt{\rho K}$. This equation has a nice structure :
$i\partial_xa\partial_x$ is antisymetric, $v\partial_x$ too 
up to zero order terms, $g'\partial_x\rho$ is of order zero since $w$ is a derivative of $\rho$. The only bad term $iw\partial_x z$ is dealt with thanks to a gauge method.\\
A few preliminary notations : for $V=(\rho,v),$ solution of \eqref{EK} and $V^a=(\rho^a,v^a)$ an approximate solution, we 
denote $z$ and $z^a$ the associated new variables. We assume that $V^a=V_{\text{ref}}+U^a$ 
and $V=V_{\text{ref}}+U$  (same reference state) so that $V-V^a=U-U^a$.\\
Generically for $F$ a function of $\rho$ we denote  $\Delta F=F(\rho)
-F(\rho^a)$, for $F$ a function of $v$, $\Delta F=F(v)-F(v^a)$ etc. The gauge function of order $n$ is
$\varphi_n(\rho):=a^{n/2}\sqrt{\rho}$ and the modified norm 
\begin{equation*}
\widetilde{\|}\Delta V\|_{\mathcal{H}^{2n}}:=\|\Delta \rho\|_{L^2}+\|\sqrt{\rho}\Delta z\|_{L^2}
+\|\varphi_n\partial_x^{2n}\Delta z\|_{L^2}.
\end{equation*}
This notation is quite incorrect as the ``norm'' depends on $V$ in a nonlinear way. 
Nevertheless, using the computation rules \ref{calcsobo} and with constants depending 
continuously on $\|V\|_{\mathcal{H}^{2n}}+\|V^a\|_{\mathcal{H}^{2n}}
+\big\|\rho + \frac{1}{\rho}\big\|_{L^\infty}+
\big\|\rho^a + \frac{1}{\rho^a}\big\|_{L^\infty}$,
we have 
$\widetilde{\|}\Delta V\|_{\mathcal{H}^{2n}}\sim \|\Delta \rho\|_{L^2}+\|\Delta z\|_{H^{2n}}$, 
and 
$\|(\Delta \rho,\Delta v)\|_{\mathcal{H}^{2n}}\sim \|\Delta z\|_{H^{2n}}
+\|\Delta \rho\|_{L^2}$  , so 
\begin{equation}\label{equivnorm}
\widetilde{\|}\Delta V\|_{\mathcal{H}^{2n}}\sim \|\Delta V\|_{\mathcal{H}^{2n}}.
\end{equation}
The main result is the following:
\begin{prop}\label{mainenergy}
Let $V_{\text{ref}}^a$ be a reference state smooth, bounded with its derivatives rapidly decaying at infinity. 
Let $V^a=(\rho^a,v^a)=V_{\text{ref}}^a+U^a$ be an
approximate solution of  \eqref{EK}
\begin{equation*}
 \partial_tV^a=J\partial_x\delta H[V^a]+f^a,
\end{equation*}
and
$V$ a solution of \eqref{EK} such that $U=V-V_{\text{ref}}^a\in \mathcal{H}^{2n}, \ n\geq 1$. 
Then the estimate holds 
\begin{equation*}
\bigg|\frac{1}{2}\frac{d}{dt}\widetilde{\|}\Delta V\|_{\mathcal{H}^{2n}}\bigg|\leq
C\bigg(\|U\|_{\mathcal{H}^{2n}}+\|U^a\|_{\mathcal{H}^{2n+2}}
+\big\|1/\rho+1/\rho^a\bigg\|_{L^\infty}\bigg)\big(
\widetilde{\|}\Delta V\|_{\mathcal{H}^{2n}}+\|f^a\|_{\mathcal{H}^{2n}}\bigg),
\end{equation*}
with $C$ a continuous, positive nondecreasing function $\R^+\to \R^{+*}$.
\end{prop}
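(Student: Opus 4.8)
I would run a weighted energy estimate on the diagonalising unknown $z=v+iw$, $w=\sqrt{K(\rho)/\rho}\,\partial_x\rho$, exploiting the structure recalled above. Write $\Delta\rho=\rho-\rho^a$, $\Delta z=z-z^a$ (so $\mathrm{Re}\,\Delta z=v-v^a$, $\mathrm{Im}\,\Delta z=w-w^a$) and $a=\sqrt{\rho K}$. From \eqref{EK} and from $\partial_tV^a=J\partial_x\delta H[V^a]+f^a$ one gets
\begin{align*}
&\partial_t\Delta\rho+\partial_x\big(\rho\,\mathrm{Re}(\Delta z)+v^a\,\Delta\rho\big)=-f^a_1,\\
&\partial_t\Delta z+v\,\partial_x\Delta z+iw\,\partial_x\Delta z+i\partial_x\big(a\,\partial_x\Delta z\big)=\mathcal R-\mathcal G^a,
\end{align*}
where $\mathcal R$ gathers the terms in which a coefficient difference ($\Delta v$, $\mathrm{Im}\,\Delta z$, $a-a^a$, $g'(\rho)\partial_x\rho-g'(\rho^a)\partial_x\rho^a$) multiplies $z^a$, $\partial_x z^a$ or $\rho^a$, and $\mathcal G^a$ is the source produced by $f^a$. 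Since $z$ carries one derivative of $\rho$, $\mathcal G^a$ contains $\partial_x f^a_1$, whence $\|\mathcal G^a\|_{H^{2n}}\le C\|f^a\|_{\mathcal{H}^{2n}}$ by \eqref{calcsobo} and the algebra property of $H^{2n}$ (this is exactly why the first component of $f^a$ is measured in $H^{2n+1}$). It then suffices to bound $\tfrac12\frac{d}{dt}M^2$, where $M^2:=\|\Delta\rho\|_{L^2}^2+\|\sqrt\rho\,\Delta z\|_{L^2}^2+\|\varphi_n\partial_x^{2n}\Delta z\|_{L^2}^2$ is equivalent to $\widetilde{\|}\Delta V\|_{\mathcal{H}^{2n}}^2$ by \eqref{equivnorm}, and then to divide by $M$.

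\noindent\textbf{The two $L^2$ pieces.} Testing the $\Delta\rho$-equation against $\Delta\rho$ and integrating the transport part by parts, the only contribution not manifestly $O(M^2)$ is $\int\rho\,\mathrm{Re}(\Delta z)\,\partial_x\Delta\rho$; but $\partial_x\Delta\rho=\sqrt{\rho/K}\,\mathrm{Im}\,\Delta z+O_{L^2}(\|\Delta\rho\|_{L^2})$ (here $1/\rho\in L^\infty$ keeps $\sqrt{\rho/K}$ bounded, $K$ being possibly singular at $0$), so it too is $O(M^2)$. Testing the $\Delta z$-equation against $\rho\,\overline{\Delta z}$ and taking real parts, $v\partial_x$ and $i\partial_x(a\partial_x\cdot)$ reduce after one integration by parts to zero-order terms, $g'(\rho)\partial_x\rho-g'(\rho^a)\partial_x\rho^a$ is zero order, and the two surviving first-order terms $\mathrm{Im}\!\int\rho\,w\,\partial_x\Delta z\,\overline{\Delta z}$ and $-\mathrm{Im}\!\int a\,\partial_x\rho\,\partial_x\Delta z\,\overline{\Delta z}$ cancel because $\rho\,w=a\,\partial_x\rho$ --- this being precisely the order-zero gauge $\varphi_0^2=\rho$. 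All remainders from $\mathcal R,\mathcal G^a$ are then controlled by $C\big(\|U\|_{\mathcal{H}^{2n}}+\|U^a\|_{\mathcal{H}^{2n+2}}+\|1/\rho+1/\rho^a\|_{L^\infty}\big)\,M\,(M+\|f^a\|_{\mathcal{H}^{2n}})$ via \eqref{calcsobo}--\eqref{calcsobo2}.

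\noindent\textbf{The top-order piece and the gauge: the crux.} Put $Z=\partial_x^{2n}\Delta z$. Commuting $\partial_x^{2n}$ through the $\Delta z$-equation produces, besides $\partial_x^{2n}(\mathcal R-\mathcal G^a)$, the commutators $[\partial_x^{2n},v]\partial_x\Delta z$, $[\partial_x^{2n},iw]\partial_x\Delta z$ and $[\partial_x^{2n},i\partial_x a\partial_x]\Delta z$. Test against $\varphi_n^2\,\overline Z$ and take real parts. The term $\tfrac12\!\int\partial_t(\varphi_n^2)|Z|^2$ is harmless ($\partial_t\rho$ is bounded via the continuity equation), $-\mathrm{Re}\!\int\varphi_n^2 v\,\partial_xZ\,\overline Z=\tfrac12\!\int\partial_x(\varphi_n^2 v)|Z|^2$ is zero order, and the skew-symmetric operator gives, after one integration by parts, $-\mathrm{Im}\!\int 2a\varphi_n\varphi_n'\,\partial_xZ\,\overline Z$ (the term $\int a\varphi_n^2|\partial_xZ|^2$ being real). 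What remains dangerous is the sum of the terms that are genuinely of order one in $Z$ with purely imaginary coefficient: the contribution of $iw\,\partial_xZ$ and the principal part of $[\partial_x^{2n},i\partial_x a\partial_x]\Delta z$, which is a multiple of $a'\,\partial_x^{2n+1}\Delta z=a'\,\partial_xZ$. A term $\mathrm{Im}\!\int c\,\partial_xZ\,\overline Z$ with $c$ real cannot be reduced, since integration by parts only reveals its real part. The gauge $\varphi_n$ defined above is designed exactly so that the total coefficient of $\partial_xZ\,\overline Z$ --- a fixed combination of $\varphi_n^2 w$, $\varphi_n^2 a'$ and $-2a\varphi_n\varphi_n'$ --- vanishes identically; equivalently $\partial_x\log\varphi_n$ is the matching combination of $w/a=\partial_x\rho/\rho$ and $a'/a$, which integrates to $\varphi_n$. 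Once this exact cancellation is in place, each surviving term is either zero order in $Z$, or a commutator/remainder of order $\le 2n$ in $\Delta z$ whose coefficients carry at most two extra derivatives of $V^a$: the $i\partial_x(a^a\partial_x z^a)$-type remainders in $\mathcal R$ are the ones requiring $\partial_x^{2n+2}z^a$, hence $V^a\in\mathcal{H}^{2n+2}$, whereas the commutators load at most $2n$ derivatives on $V$ (this dissymmetry is the origin of $\|U\|_{\mathcal{H}^{2n}}$ versus $\|U^a\|_{\mathcal{H}^{2n+2}}$). By the standard commutator estimates and \eqref{calcsobo}--\eqref{calcsobo2} these are all bounded by $C\big(\|U\|_{\mathcal{H}^{2n}}+\|U^a\|_{\mathcal{H}^{2n+2}}+\|1/\rho+1/\rho^a\|_{L^\infty}\big)\,M$, and $|\langle\varphi_n^2\partial_x^{2n}\mathcal G^a,Z\rangle|\le C\|f^a\|_{\mathcal{H}^{2n}}\,M$. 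Intermediate orders $1,\dots,2n-1$ never appear alone; they enter only through remainders handled by interpolation between the $L^2$ and top-order quantities. Summing the three estimates yields $\tfrac12\frac{d}{dt}M^2\le C(\cdots)\,M\,(M+\|f^a\|_{\mathcal{H}^{2n}})$ with $C$ continuous nondecreasing; dividing by $M$ and using \eqref{equivnorm} gives the claim. The only real difficulty is making the first-order cancellation \emph{exact} through the gauge and keeping precise track of the derivative counts on $V$ versus $V^a$; everything else is the calculus of \eqref{calcsobo}--\eqref{calcsobo2}.
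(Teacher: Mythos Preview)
Your proposal is correct and follows essentially the same route as the paper: pass to the diagonalising variable $z=v+iw$, run an $L^2$ estimate on $\Delta\rho$ and weighted $L^2$ estimates on $\Delta z$ at orders $0$ and $2n$, and use the gauge $\varphi_n=a^{n/2}\sqrt{\rho}$ to kill the sole dangerous first-order imaginary term (the paper writes the cancellation as $\varphi_n w+2n\,a'\varphi_n-2a\varphi_n'=0$, which is your ``fixed combination'' after dividing by $\varphi_n$). The only cosmetic difference is that the paper applies $\varphi_k\partial_x^{2k}$ to the equation and then pairs with $\Delta z_k:=\varphi_k\partial_x^{2k}\Delta z$, whereas you differentiate first and then test against $\varphi_n^2\overline{Z}$; these are equivalent, and your identification of the $\mathcal H^{2n}$ vs.\ $\mathcal H^{2n+2}$ asymmetry (coming from $\partial_x^{2n+1}(\Delta a\,\partial_x z^a)$) matches the paper's remainder bookkeeping.
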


\begin{proof}
We recall the convention of section \ref{not}; the hidden constants in $\lesssim,\sim$ are as the function $C$
of the statement.
If $f^a=(f_1^a,f_2^a)$, the equations on $\Delta \rho,\Delta z$ are 
\begin{equation*}
\left\{
\begin{array}{lll}
\partial_t\Delta \rho+ \partial_x(\Delta \rho v+\rho^a\Delta v)&=&f_1^a,\\
\partial_t\Delta z+v\partial_x \Delta z+\Delta v\partial_x z^a+
iw\partial_x\Delta z+i\Delta w\partial_x z^a&&\\
+\partial_x\Delta g
+i\partial_x\big(a\partial_x\Delta z+\Delta a \partial_x z^a)&=&i\sqrt{\frac{K}{\rho^a}}\partial_xf_1^a+f_2^a:=h^a.
\end{array}\right.
\end{equation*}
$\|\Delta \rho\|_{L^2}$ is estimated by multiplying the first equation by 
$\Delta \rho$ and space integration
\begin{eqnarray}\label{controlrho}
\nonumber 
\bigg|\frac{1}{2}\frac{d}{dt}\|\Delta \rho\|_{L^2}^2\bigg|&\leq&
\|\partial_x \Delta \rho\|_{L^2}
(\|\Delta \rho\|_{L^2}\|v\|_{L^\infty}+\|\rho^a\|_{L^\infty}\|\Delta v\|_{L^2})
+\|\Delta \rho\|_{L^2}\|f_1^a\|_{L^2}\\
&\lesssim& \big(\|\Delta V\|_{\mathcal{H}^{2n}}+\|f^a\|_{\mathcal{H}^{2n}}^2\big)
\|\Delta V\|_{\mathcal{H}^{2n}}.
\end{eqnarray}
The main issue is thus to control $\Delta z$. Let us first note that 
\begin{equation}\label{estimforcage}
\|h^a\|_{H^{2n}}^2\lesssim \|\partial_xf_1^a\|_{H^{2n}}^2+\|f^a_2\|_{H^{2n}}^2\leq 
\|f^a\|_{\mathcal{H}^{2n}}^2.
\end{equation}
For $0\leq k\leq n$, we apply $a^k\sqrt{\rho}\partial_x^{2k}:=\varphi_k
\partial_x^{2k}$ to the second 
equation. Denoting $\Delta z_k=\varphi_k\partial_x^{2k}\Delta z$ we find after some commutations 
\begin{eqnarray}\nonumber
 \partial_t\Delta z_k +v\partial_x\Delta z_k+i\partial_x(a\partial_x \Delta z_k)
 +i(\varphi_kw+2k\partial_x(a)\varphi_k-2a\partial_x\varphi_k)\partial_x^{2k+1}\Delta z&&\\
\label{eqzn} +i\varphi_k\partial_x^{2k+1}(\Delta a\partial_x z^a)&=&
 R+\varphi_k\partial_x^{2k}h^a\hspace{5mm}
\end{eqnarray}
where $R$ is a remainder term containing derivatives of $\Delta z$
of order at most $2k$, and derivatives of $z^a$ of order at most 
$2k+2$
\begin{eqnarray*}
R&=&[z\partial_x,\varphi_k\partial_x^{2k}]\Delta z-i\varphi_k\partial_x^{2k}
(\Delta v\partial_xz^a+i\Delta w\partial_xz^a)\\
&&
+i[\partial_x(a\partial_x\cdot),\varphi_k\partial_x^{2k}]\Delta z
+2ik\partial_x(a)\varphi_k\partial_x^{2k+1}\Delta z\\
&&-\varphi_k\partial_x^{2k+1}(\Delta g)
+i\varphi_k\partial_x^{2k+1}(\Delta a\partial_xz^a)
-\varphi_k'\partial_x(\rho v)\partial_x^{2k}\Delta z.
\end{eqnarray*}
By construction, 
\begin{eqnarray*}
\varphi_kw+2k\partial_x(a)\varphi_k-2a\partial_x\varphi_k&=&
a^k\sqrt{K}+2k\sqrt{\rho K}\\
&=&\bigg(\sqrt{\frac{K}{\rho}}a^k\sqrt{\rho}+2ka^ka'\sqrt{\rho}
-2ka^ka'\sqrt{\rho}-\frac{a^{k+1}}{\sqrt{\rho}}\bigg)\partial_x \rho\\
&=& 0.
\end{eqnarray*}
Therefore, multiplying $\eqref{eqzn}$ by $\Delta z_k$ and integrating, 
\begin{eqnarray}\label{controlz}
\bigg|\frac{d}{dt}\|\Delta z_k\|_{L^2}^2\bigg|&\lesssim& 
(\|v\|_{L^\infty}\|\Delta z_k\|_{L^2}
+\|R\|_{L^2}+C\|z^a\|_{H^{2k+2}}\|\Delta z\|_{H^{2k}}+\|\varphi_k\partial_x^{2k}h^a\|_{L^2})\|\Delta z_k\|_{L^2}.
\end{eqnarray}
Using section \ref{calcsobo} and Faa di Bruno formula $\|R\|_{L^2}\lesssim
\|\Delta V\|_{\mathcal{H}^{2k}}$, moreover from \eqref{estimforcage}
$\|\varphi_k\partial_x^{2k}h^a\|_{L^2}\lesssim \|f^a\|_{\mathcal{H}^{2k}}$,
\eqref{controlz} rewrites
\begin{equation*}
\bigg|\frac{d}{dt}\|\Delta z_k\|_{L^2}^2\bigg|\lesssim \|\Delta V\|_{\mathcal{H}^{2k}}\big(\|\Delta V\|_{\mathcal{H}^{2k}}+
+\|f^a\|_{\mathcal{H}^{2k}}\big).
\end{equation*}
Thanks to \eqref{equivnorm}, $\|\Delta V\|_{\mathcal{H}^{2k}}
\lesssim \widetilde{\|}\Delta V\|_{\mathcal{H}^{2n}}$. Adding estimates 
\eqref{controlrho} and \eqref{controlz} with $k=0$ and $k=n$ we 
conclude 
\begin{equation*}
\bigg|\frac{d}{dt}\widetilde{\|}\Delta V\|_{\mathcal{H}^{2n}}^2
\bigg|=
\bigg|\frac{d}{dt}\big( \|\Delta \rho\|_{L^2}^2
+\|\sqrt{\rho}\Delta z\|_{L^2}^2
+\|\varphi_n\partial_x^{2n}\Delta z\|_{L^2}^2)\bigg|
\lesssim \widetilde{\|}\Delta V\|_{\mathcal{H}^{2n}}\big(
\widetilde{\|}\Delta V\|_{\mathcal{H}^{2n}}+\|f^a\|_{\mathcal{H}^{2n}}\big).
\end{equation*}

\end{proof}

\section{Linear estimates}\label{seclin}
This section is devoted to estimates in $\mathcal{H}^n$ 
on the flows associated to $J\partial_x\delta^2H[V^c]$ ($V^c$
a traveling wave) and 
$J\partial_x\delta^2H[S]$.
\\
We recall the notation 
$\di \delta H[V]=
\begin{pmatrix}
-K\partial_x^2\rho-\frac{1}{2}K'(\partial_x\rho)^2+g(\rho)+v^2/2\\
\rho v
\end{pmatrix}
$, in the same spirit 
\begin{equation}\label{d2H}
 \delta^2H[V]
 \begin{pmatrix}
  r\\
  u
 \end{pmatrix}
= \begin{pmatrix}
   (-K'(\rho)\partial_x^2\rho-\frac{1}{2}K''(\rho)(\partial_x\rho)^2+g'(\rho)\big)r
   -\partial_x(K\partial_xr) +uv
   \\
   \rho u+rv 
  \end{pmatrix},
\end{equation}
or in a matrix operator notation 
\begin{equation*}
 \delta^2H[V]=
 \begin{pmatrix}
   \big(-K'(\rho)\partial_x^2\rho-\frac{1}{2}K''(\rho)(\partial_x\rho)^2+g'(\rho)\big)
   -\partial_x(K\partial_x\cdot) & v
   \\
   v & \rho 
 \end{pmatrix}.
\end{equation*}
As can be expected, $\delta^2H$ is a symmetric operator.
Recalling $\langle\cdot,\cdot\rangle$ is the $L^2$ scalar product, 
we shall use frequently that 
\begin{eqnarray}\nonumber
\langle  \delta^2H[V]
 \begin{pmatrix}
  r\\
  u
 \end{pmatrix},
 \begin{pmatrix}
  r_1\\
  u_1
 \end{pmatrix}
 \rangle &=& \int_{\R} 
 \big(-K'(\rho)\partial_x^2\rho-\frac{1}{2}K''(\rho)(\partial_x\rho)^2+g'(\rho)\big) rr_1\\
\label{bilin} &&\hspace{1cm}+K\partial_xr\partial_xr_1+vur_1+vru_1+\rho uu_1dx,
\end{eqnarray}
so that $\delta^2H$ induces a continuous bilinear form on $\mathcal{H}^0$ if 
$V$ is smooth enough.

\subsection{Linear stability of a traveling wave}
\paragraph{The case of a kink} 
Let $V^c$ be a kink of speed $c$. 
The system \eqref{EK} linearized near $V^c$ reads after the change of variables $x\to x-ct$
\begin{equation*}
 \partial_tU(x,t)=J\partial_x\delta^2(H-cP)[V^c(x)]U(x,t).
\end{equation*}
We define a modified energy functional $E=H-cP$.
According to lemma $3$ in \cite{BDD2} (see also 
remark $2$ in this reference) kinks are always stable in 
the following sense:
\begin{lemma}
For any $U\in \mathcal{H}$ there exists a unique orthogonal decomposition
\begin{equation}\label{decompkink}
U=\alpha \partial_xV^c+W,\ \partial_x V^c\in \text{Ker}(\delta^2E)
\text{ and } \langle \delta^2E[V^c] U,U\rangle \gtrsim\|W\|_{\mathcal{H}^0}^2.
\end{equation}
\end{lemma}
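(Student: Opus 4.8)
The plan is to construct the coercivity inequality in two stages, first at the level of the quadratic form on the relevant orthogonal complement, then handling the single degenerate direction $\partial_x V^c$ separately. First I would recall from the construction of kinks (appendix, following \cite{BDD2}) that $\rho^c$ is monotone with exponentially decaying derivative, and that differentiating the profile equation $J\partial_x\delta E[V^c]=0$ in $x$ shows $\partial_x V^c\in\mathrm{Ker}(\delta^2 E[V^c])$ — since $\delta^2 E[V^c]\partial_x V^c=\partial_x\delta E[V^c]$ up to the kernel of $J\partial_x$, and one checks the constant of integration vanishes because $\partial_x V^c$ and its derivatives decay at $\pm\infty$. Because $\rho^c$ is strictly monotone, $\partial_x\rho^c$ does not vanish, so $\partial_x V^c$ spans a one-dimensional space and the decomposition $U=\alpha\,\partial_x V^c+W$ with $W\perp \partial_x V^c$ in $\mathcal H^0$ is well-defined and unique; one also checks $\langle\delta^2E[V^c]\partial_x V^c,\cdot\rangle=0$, so $\langle\delta^2E[V^c]U,U\rangle=\langle\delta^2E[V^c]W,W\rangle$, reducing everything to coercivity on $W$.

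Next I would analyze the quadratic form $\langle\delta^2E[V^c]W,W\rangle$ using the explicit expression \eqref{bilin} with $H$ replaced by $E=H-cP$, i.e. adding the contribution of $-c\delta^2P$ where $\delta^2 P[V]\binom{r}{u}=-\binom{u}{r}$ so that $-c\delta^2 P$ contributes $2c\,ru$ to the bilinear form. Writing $W=\binom{r}{u}$, the $uu$-term is $\rho^c|u|^2$ which is coercive in $L^2$ for $u$ since $\rho^c$ is bounded below, and completing the square in $u$ against the cross terms $(v^c-c)ru - c\,ru$ (wait: the cross term is $(v^c)ru$ from $H$ and $c\,ru$ from $-cP$, combining to $(v^c+c)$; the precise combination is immaterial for the sketch) leaves, after minimizing over $u$, a scalar Schrödinger-type form in $r$ alone:
\begin{equation*}
q(r)=\int_{\R} K(\rho^c)|\partial_x r|^2 + W_0(x)\,|r|^2\,dx,
\end{equation*}
where $W_0$ is a smooth potential built from $g',K',K'',\rho^c$ and the profile, converging exponentially to constant positive values $g'(\rho^c_\pm)-$(drift terms)$>0$ at $\pm\infty$ by the subsonic/appropriate sign condition on the endstates that makes $V^c$ a genuine kink. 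The operator $-\partial_x(K(\rho^c)\partial_x\cdot)+W_0$ is then a Schrödinger operator with positive essential spectrum; the monotonicity of $\rho^c$ is exactly what rules out a zero eigenvalue below the essential spectrum, since any eigenfunction of the reduced operator at eigenvalue $0$ would have to be proportional to the (non-sign-changing, because $\rho^c$ monotone) component coming from $\partial_x V^c$, which has been projected out. Hence $q(r)\gtrsim\|r\|_{H^1}^2$ on $W^\perp$, and combined with the coercive $u$-term this gives $\langle\delta^2 E[V^c]W,W\rangle\gtrsim \|r\|_{H^1}^2+\|u\|_{L^2}^2=\|W\|_{\mathcal H^0}^2$.

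The main obstacle I expect is the spectral step: proving that, after projecting out $\partial_x V^c$, the reduced scalar form has a strictly positive lower bound rather than merely being nonnegative. This is where one genuinely uses that the kink is a ground-state-type object — the Sturm–Liouville oscillation theory (or a direct argument that $\partial_x V^c$ corresponds, after the $u$-reduction, to a nodeless bound state at the bottom of the discrete spectrum) forces the next eigenvalue to be strictly positive. One must also be careful that minimizing over $u$ is legitimate (the $u$-block $\rho^c$ is bounded above and below, so this is a bounded invertible multiplication operator) and that the resulting potential $W_0$ really does have positive limits at $\pm\infty$, which is a constraint on the pressure law $g$ and is precisely what is verified in the appendix for the admissible configurations. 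Since \cite{BDD2}, Lemma~3 already establishes this, in the write-up I would simply cite that result for the spectral positivity and only assemble the decomposition and the reduction here; but the conceptual heart is the Sturm–Liouville argument sketched above.
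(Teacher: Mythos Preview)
The paper does not prove this lemma at all: it simply quotes it as a direct consequence of Lemma~3 in \cite{BDD2} (together with Remark~2 there). Your proposal is thus not a comparison target in the usual sense --- you are sketching the argument that lies \emph{behind} the cited reference, and you correctly note at the end that in the write-up you would do exactly what the paper does and cite \cite{BDD2}.

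Your outline is substantively the right one and matches the standard approach: differentiate the profile equation to get $\partial_x V^c\in\ker\delta^2E$; use symmetry of $\delta^2E$ to reduce to coercivity on the complement; complete the square in $u$ (legitimate since $\rho^c$ is bounded away from zero) to obtain a scalar Schr\"odinger form in $r$ with potential tending to $g'(\rho_\pm)-j^2/\rho_\pm^3>0$ by \eqref{condj}; and invoke Sturm--Liouville, using that $\partial_x\rho^c$ is nodeless because the kink is monotone, so the zero mode is the ground state and the remaining spectrum is strictly positive. Two small points deserve care. First, the cross term after including $-c\delta^2P$ is $2(v^c-c)ru$, not $(v^c+c)ru$ as you hesitated over; this matters because it is precisely $(v^c-c)^2/\rho^c=j^2/(\rho^c)^3$ that appears in the reduced potential and makes the endstate condition \eqref{condj} the relevant one. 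Second, and more substantively, minimizing freely over $u$ discards the orthogonality constraint on $W$, so the inequality $\langle\delta^2EW,W\rangle\ge q(r)$ holds for all $W$, but the constraint $W\perp\partial_x V^c$ does not translate directly into $r\perp\partial_x\rho^c$. The clean way around this (and what is effectively done in \cite{BDD2}) is to run the spectral argument for the full $2\times2$ operator: essential spectrum positive by Weyl's theorem, zero a simple eigenvalue with eigenvector $\partial_x V^c$, no negative eigenvalues because the reduced scalar ground state is nodeless; then coercivity on the $L^2$-orthogonal complement of $\partial_x V^c$ follows from the spectral theorem, and passing to any other codimension-one complement transverse to $\partial_x V^c$ is a finite-dimensional perturbation. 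Your sketch implicitly assumes this last step, which is fine for a plan but should be made explicit in a full proof.
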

For the link between linear stability and $\delta^2E$ being definite positive, see e.g.
theorem $3.1$ of Pego-Weinstein \cite{PegoWeinstein}.
\paragraph{The case of a soliton}
We consider a branch of solitons $V^c$. As it is more convenient 
here to work on $U^c$, we denote 
$P[U^c]=\int r^cu^c$ and abusively $\delta H[U^c]=\delta H[V^c]$. 
We recall (see \eqref{EKHam}) that $J=\begin{pmatrix}0 & -1 \\ -1 & 0\end{pmatrix}$ so that $\delta^2 P=-J$. 
From $-c\partial_xU^c=J\partial_x\delta H[U^c]$ we have a number of useful identities 
\begin{eqnarray}
 \forall\,U,V,\ \delta P[U]&=&\delta^2P[V]U=-JU,\\
\label{hamsol}
 (\delta H-c\delta P)[U^c]&:=& \delta E[U^c]\text{ is constant},
\\
\delta^2E[U^c]\cdot \partial_xU^c&=&0\text{ (differentiation of
\eqref{hamsol} in $x$)},\label{kernel}\\
\delta^2E[U^c]\partial_cU^c-\delta P[U^c]&=&0 \text{ (differentiation in $c$)} \label{jordan}\\
\Leftrightarrow
\delta^2E[U^c]\partial_cU^c&=&-JU^c.\label{cdiff}
 \end{eqnarray}

\subparagraph{Stability assumption} We assume that $U_c$ is stable, namely it satisfies :
\begin{equation*}
\frac{dP[U^s]}{ds}|_{s=c}<0.\label{stabass}
\end{equation*}
(see the appendix for a link with the so called Boussinesq
momentum of instability). This also implies that $\partial_cU^c$ is an unstable direction in the 
sense that 
\begin{equation*}
\langle \delta^2E[U^c]\partial_cU^c,\partial_cU^c\rangle = \langle \delta P[U^c],\partial_cU^c\rangle
=\frac{d}{dc}P[U^c]<0.
\end{equation*}
Let us first recall a result from \cite{BDD2} (proved for the formulation of the 
Euler-Korteweg system  in Lagrangian coordinates, see also \cite{Audiard8} appendix B for 
a proof in Eulerian coordinates).
\begin{lemma}\label{posE}
Under the stability assumption, the operator $\delta^2E[U^c]$ is block diagonal on the orthogonal decomposition 
$\mathcal{H}=\text{vect}(U_-)\oplus_\perp \text{vect}(\partial_xU^c)
\oplus_\perp\mathcal{G}$,
where $\partial_xU^c$ spans the kernel of $\delta^2E$, $U_-$ is a normalized eigenvector associated to the 
unique negative eigenvalue, and 
\begin{equation*}
\forall\,W\in \mathcal{G},\ \langle \delta^2 E[V^c] W,W\rangle
\gtrsim \|W\|_{\mathcal{H}^0}^2.
\end{equation*}
\end{lemma}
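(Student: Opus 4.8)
The plan is to follow the classical Grillakis--Shatah--Strauss scheme: reduce the $2\times2$ operator $\delta^2E[U^c]$ to a scalar Schr\"odinger operator acting on the density component, and then read off its spectrum from Sturm oscillation theory. Integrating the first equation of \eqref{EK} for the profile shows $\rho^c(v^c-c)$ is constant, hence $\partial_xv^c=-\frac{v^c-c}{\rho^c}\partial_x\rho^c$, and from \eqref{d2H}
\begin{equation*}
\delta^2E[U^c]=\begin{pmatrix}-\partial_x(K\partial_x\cdot)+W_0 & v^c-c\\ v^c-c & \rho^c\end{pmatrix},\quad W_0:=-K'(\rho^c)\partial_x^2\rho^c-\tfrac12K''(\rho^c)(\partial_x\rho^c)^2+g'(\rho^c),
\end{equation*}
a self-adjoint operator on $L^2\times L^2$ with form domain $H^1\times L^2$. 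Observe that the block-diagonality claimed in the statement is then automatic once $U_-$ and $\partial_xU^c$ are known to be eigenvectors: self-adjointness gives $\langle\delta^2E[U^c]W,U_-\rangle=0$ and $\langle\delta^2E[U^c]W,\partial_xU^c\rangle=\langle W,\delta^2E[U^c]\partial_xU^c\rangle=0$ for every $W\in\mathcal G$, so each of the three summands is invariant.

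First I would locate the essential spectrum. Since the derivatives of $V^c$ decay exponentially the coefficients tend to constants at $\pm\infty$, and the symbol of $\delta^2E[U^c]$ at infinity is the matrix with diagonal entries $K_+\xi^2+g'(\rho_+)$, $\rho_+$ and off-diagonal entry $v_+-c$; the condition that makes a soliton exponentially localized, strict subsonicity of the endstate $(v_+-c)^2<\rho_+g'(\rho_+)$, forces this matrix to be uniformly positive definite, so $\sigma_{\mathrm{ess}}(\delta^2E[U^c])\subset[\mu_\infty,\infty)$ for some $\mu_\infty>0$. Thus the spectrum below $\mu_\infty$ is a finite set of eigenvalues of finite multiplicity, and it remains to count the nonpositive ones. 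For that I pass to the Schur complement: because $\rho^c\ge\rho_0>0$, minimizing $\langle\delta^2E[U^c](r,u),(r,u)\rangle$ over $u\in L^2$ at fixed $r$ is attained at $u=-\frac{v^c-c}{\rho^c}r$ and equals $\langle Lr,r\rangle$ with $L:=-\partial_x(K\partial_x\cdot)+W_0-\frac{(v^c-c)^2}{\rho^c}$; the standard inertia/kernel comparison then gives $n_-(\delta^2E[U^c])=n_-(L)$ and an isomorphism $\ker\delta^2E[U^c]\simeq\ker L$ via $r\mapsto(r,-\frac{v^c-c}{\rho^c}r)$.

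Now the key qualitative input from the appendix enters. Differentiating \eqref{hamsol} in $x$ gives $\partial_xU^c\in\ker\delta^2E[U^c]$, so its density component $\partial_x\rho^c$ — an $L^2$ function by exponential decay — satisfies $L\partial_x\rho^c=0$. For a bubble $\rho^c$ has a \emph{single} interior extremum, hence $\partial_x\rho^c$ has exactly one zero; by Sturm oscillation theory (the $n$-th eigenfunction of the one-dimensional operator $L$ has exactly $n$ interior zeros, ground state $n=0$, and its eigenvalues below $\mu_\infty$ are simple) this means $0$ is the first excited eigenvalue of $L$. Therefore $L$ has exactly one negative eigenvalue $\lambda_0<0$, with positive ground state $\psi_0$, the eigenvalue $0$ is simple, and $\sigma(L)\setminus\{\lambda_0,0\}$ is bounded below by a positive constant. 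Translating back, $\delta^2E[U^c]$ has exactly one negative eigenvalue $\lambda_-=\lambda_0$, simple, with normalized eigenvector $U_-\propto(\psi_0,-\frac{v^c-c}{\rho^c}\psi_0)$, kernel $\text{vect}(\partial_xU^c)$, and $\sigma(\delta^2E[U^c])\setminus\{\lambda_-,0\}$ bounded below by a positive constant. (Consistency check: \eqref{cdiff} and the stability assumption give $\langle\delta^2E[U^c]\partial_cU^c,\partial_cU^c\rangle=\frac{d}{dc}P[U^c]<0$, so $\partial_cU^c\notin\text{vect}(\partial_xU^c)\oplus\mathcal G$.) Finally, for $W=(r,u)\in\mathcal G$ the spectral theorem yields $\langle\delta^2E[U^c]W,W\rangle\gtrsim\|W\|_{L^2}^2$, and since $\int K|\partial_xr|^2=\langle\delta^2E[U^c]W,W\rangle-\int(W_0r^2+2(v^c-c)ru+\rho^cu^2)$ with the last integral $\lesssim\|W\|_{L^2}^2$, this upgrades to $\|W\|_{\mathcal H^0}^2=\|r\|_{H^1}^2+\|u\|_{L^2}^2\lesssim\langle\delta^2E[U^c]W,W\rangle$, which is the claim.

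The main obstacle is the eigenvalue count, with two delicate points. One is making the Schur-complement reduction rigorous for \emph{unbounded} operators — tracking form domains, using $\rho^c\ge\rho_0>0$ so that the minimization in $u$ is attained in $L^2$, and checking that Morse index and kernel dimension are genuinely preserved. The other is the Sturm step, which needs $\partial_x\rho^c$ to be a bona fide $L^2$ eigenfunction (granted by exponential decay) and, crucially, the exact sign-change count — this is precisely why the qualitative ODE description of the bubble cannot be dispensed with, and also why the companion statement \eqref{decompkink} for a kink produces \emph{no} negative eigenvalue, $\partial_x\rho^c$ being nodeless there. The strict positivity $\mu_\infty>0$ is likewise a real input from the traveling-wave construction (subsonicity of the endstate) rather than something one can finesse.
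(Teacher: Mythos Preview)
The paper does not actually prove this lemma: it is quoted as a known result from \cite{BDD2} (Lagrangian coordinates) and \cite{Audiard8} (Eulerian coordinates), so there is no in-paper proof to compare against. Your sketch is the standard route those references take, and the architecture is sound: Weyl essential spectrum via the subsonic endstate, Schur reduction to the scalar operator $L=-\partial_x(K\partial_x\cdot)+W_0-(v^c-c)^2/\rho^c$, then Sturm oscillation on the single sign change of $\partial_x\rho^c$ to pin $0$ as the second eigenvalue of $L$.

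Two small inaccuracies worth flagging. First, the Schur complement transfers the \emph{Morse index} and kernel dimension, not the eigenvalues themselves: it is not true that $\lambda_-=\lambda_0$, and the eigenvector $U_-$ of $\delta^2E$ is \emph{not} $(\psi_0,-\tfrac{v^c-c}{\rho^c}\psi_0)$. Indeed from the second row of $(\delta^2E-\lambda_-)U_-=0$ one gets $u=\tfrac{(v^c-c)r}{\lambda_--\rho^c}$, which only reduces to your formula when $\lambda_-=0$. This does not affect the argument, since all you need is the count $n_-(\delta^2E)=n_-(L)=1$ and $\dim\ker\delta^2E=\dim\ker L=1$, which your quadratic-form decomposition $Q(r,u)=\langle Lr,r\rangle+\int\rho^c\big(u+\tfrac{v^c-c}{\rho^c}r\big)^2$ does deliver. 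Second, and relatedly, the stability assumption $dP/dc<0$ is indeed not used in this lemma (your ``consistency check'' is the right attitude); it only enters in the next lemma \eqref{pseudodiag}, where one must rule out a second nonpositive direction in $(\delta P[U^c],\partial_xU^c)^\perp$.
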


\begin{lemma}
For $U\in \mathcal{H}$, there exists a unique orthogonal decomposition 
\begin{equation}\label{pseudodiag}
U=\alpha \delta P[U^c]+\beta \partial_x U^c+W,\ W\in (\delta P[U^c],\ \partial_x U^c)^{\perp} \text{ and 
}\langle \delta^2EU,U\rangle\gtrsim \|W\|_{\mathcal{H}^0}^2-C\alpha^2.
\end{equation}
\end{lemma}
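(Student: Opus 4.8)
The plan is to combine the block decomposition of Lemma~\ref{posE} with a Grillakis--Shatah--Strauss type test-vector argument, which is the point at which the stability assumption enters through \eqref{cdiff}.

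\emph{Existence and uniqueness.} The vectors $\delta P[U^c]=-JU^c$ and $\partial_xU^c$ are orthogonal in $L^2$, since $\langle -JU^c,\partial_xU^c\rangle=\int_\R\partial_x(r^cu^c)\,dx=0$, and both are nonzero; hence the orthogonal projection onto $\text{vect}(\delta P[U^c],\partial_xU^c)$ is well defined and provides unique $\alpha$, $\beta$ and $W=U-\alpha\delta P[U^c]-\beta\partial_xU^c\in(\delta P[U^c],\partial_xU^c)^\perp$. Two facts will be used below: by \eqref{cdiff}, $\delta^2E[U^c]\partial_cU^c=\delta P[U^c]$, and by the stability assumption $\langle\delta^2E[U^c]\partial_cU^c,\partial_cU^c\rangle=\tfrac{d}{dc}P[U^c]<0$; writing the Lemma~\ref{posE} decomposition $\partial_cU^c=p_0U_-+q_0\partial_xU^c+G_0$, $G_0\in\mathcal{G}$, this forces $p_0\neq0$, as otherwise $\langle\delta^2E[U^c]\partial_cU^c,\partial_cU^c\rangle=\langle\delta^2E[U^c]G_0,G_0\rangle\geq0$.

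\emph{Coercivity on $(\delta P[U^c],\partial_xU^c)^\perp$.} This is the crux: I claim $W\in(\delta P[U^c],\partial_xU^c)^\perp$ implies $\langle\delta^2E[U^c]W,W\rangle\gtrsim\|W\|_{\mathcal{H}^0}^2$. Since $W\perp\partial_xU^c$, Lemma~\ref{posE} gives $W=aU_-+W'$ with $W'\in\mathcal{G}$, and a first lower bound
$$\langle\delta^2E[U^c]W,W\rangle=a^2\langle\delta^2E[U^c]U_-,U_-\rangle+\langle\delta^2E[U^c]W',W'\rangle\geq-\mu a^2+\delta\|W'\|_{\mathcal{H}^0}^2,$$
with $\mu:=-\langle\delta^2E[U^c]U_-,U_-\rangle>0$ and $\delta>0$ from Lemma~\ref{posE}. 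For a second bound set $t:=-a/p_0$ and $w:=W+t\,\partial_cU^c$, so that the $U_-$-component of $w$ in the Lemma~\ref{posE} decomposition vanishes; only the kernel direction $\partial_xU^c$ and a $\mathcal{G}$-component survive, hence $\langle\delta^2E[U^c]w,w\rangle\geq0$. Since $\langle\delta^2E[U^c]W,\partial_cU^c\rangle=\langle W,\delta^2E[U^c]\partial_cU^c\rangle=\langle W,\delta P[U^c]\rangle=0$, expanding gives $\langle\delta^2E[U^c]w,w\rangle=\langle\delta^2E[U^c]W,W\rangle+t^2\langle\delta^2E[U^c]\partial_cU^c,\partial_cU^c\rangle$, whence
$$\langle\delta^2E[U^c]W,W\rangle\geq-t^2\,\tfrac{d}{dc}P[U^c]=\frac{a^2}{p_0^2}\Big|\tfrac{d}{dc}P[U^c]\Big|\gtrsim a^2.$$
A convex combination of the two lower bounds, weighted towards the second strongly enough that both resulting coefficients are positive, yields $\langle\delta^2E[U^c]W,W\rangle\gtrsim a^2+\|W'\|_{\mathcal{H}^0}^2\gtrsim\|W\|_{\mathcal{H}^0}^2$.

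\emph{Conclusion.} As $\partial_xU^c\in\text{Ker}(\delta^2E[U^c])$, all $\beta$ terms vanish and
$$\langle\delta^2E[U^c]U,U\rangle=\alpha^2\langle\delta^2E[U^c]\delta P[U^c],\delta P[U^c]\rangle+2\alpha\langle\delta^2E[U^c]\delta P[U^c],W\rangle+\langle\delta^2E[U^c]W,W\rangle.$$
The first term is $\geq-C\alpha^2$; since by \eqref{bilin} the bilinear form $\langle\delta^2E[U^c]\cdot,\cdot\rangle$ is continuous on $\mathcal{H}^0$ and $\delta P[U^c]$ is fixed, $|\langle\delta^2E[U^c]\delta P[U^c],W\rangle|\leq C\|W\|_{\mathcal{H}^0}$, so Young's inequality absorbs the cross term into half of the coercive term from the previous step at the cost of another $-C\alpha^2$, giving $\langle\delta^2E[U^c]U,U\rangle\gtrsim\|W\|_{\mathcal{H}^0}^2-C\alpha^2$. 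I expect the coercivity on the orthogonal complement to be the genuine obstacle: bounding the negative-direction coefficient $a$ by $\|W'\|_{\mathcal{H}^0}$ through Cauchy--Schwarz alone is too lossy to dominate the term $-\mu a^2$, so one really needs the auxiliary vector $w=W+t\,\partial_cU^c$ — that is, the identity \eqref{cdiff} together with the sign of $\tfrac{d}{dc}P[U^c]$ — to produce the second, sign-definite bound.
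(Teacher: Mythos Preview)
Your proof is correct and takes a genuinely different route from the paper's. The paper proceeds in two steps: first a qualitative contradiction argument showing that if some nonzero $W\in(\delta P[U^c],\partial_xU^c)^\perp$ satisfied $\langle\delta^2E W,W\rangle\leq 0$, then $\delta^2E$ would be nonpositive on the three-dimensional space $\text{vect}(\partial_cU^c,\partial_xU^c,W)$, contradicting the Morse-index information of Lemma~\ref{posE}; second, a weak-compactness argument to upgrade strict positivity to uniform coercivity $\gtrsim\|W\|_{\mathcal{H}^0}^2$. Your argument is instead fully quantitative from the start: the test vector $w=W+t\,\partial_cU^c$, together with \eqref{cdiff} and the sign of $\tfrac{d}{dc}P[U^c]$, yields directly $\langle\delta^2E W,W\rangle\gtrsim a^2$, and the convex combination with the block-diagonal bound dispenses with compactness altogether. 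The paper's approach is more abstract and isolates the Morse-index obstruction cleanly, while yours is constructive and would in principle give explicit constants. The paper also treats the final expansion of $\langle\delta^2E U,U\rangle$ as routine and does not spell out the cross-term absorption; you do so explicitly via Young's inequality, which is not strictly necessary but is reassuring.
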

\begin{rmq}
To underline the unity between this decomposition 
and \eqref{decompkink} in the case of a kink, let us point 
out that since the reference state is constant
\begin{equation*}
 \partial_xV^c=\partial_xU^c.
\end{equation*} 
\end{rmq}

\begin{proof}
The momentum being invariant by translation, $\langle \delta P[U^c],\partial_xU^c\rangle=0$ 
and according to \eqref{kernel}, $\partial_xU^c\in \text{Ker}(\delta^2E)$.
Therefore the only thing to prove is $\langle 
\delta^2EW,W\rangle\gtrsim \|W\|_{\mathcal{H}^0}^2$.
By contradiction we assume the existence of  $W\in (\delta P[U^c],\partial_x U^c)^\perp
\setminus\{0\}$ such that
$\langle \delta^2EW,W\rangle\leq 0$, then for any $(\alpha,\beta,\gamma)\in \R^3$, using identities \eqref{kernel},
\eqref{jordan}
\begin{eqnarray*}
\langle \delta^2E (\alpha \partial_c U^c+\beta\partial_x U^c+\gamma W),\ \alpha \partial_c U_c&+&\beta\partial_x U^c
+\gamma W\rangle \\
&=&\langle \delta^2E (\alpha \partial_c U^c+\gamma W),\ \alpha \partial_c U^c
+\gamma W\rangle \\
&=&\alpha^2\langle \delta P[U^c],\partial_cU^c\rangle +\gamma^2\langle \delta^2E W,W\rangle \\
&&+2\alpha \gamma
\langle \delta P[U^c],W\rangle\\
&=& \alpha^2\langle \delta P[U^c],\partial_cU^c\rangle +\gamma^2\langle \delta^2E W,W\rangle,
\end{eqnarray*}
by orthogonality. By definition,  $\langle \partial_xU^c,
W\rangle =0$ and $\langle \partial_cU^c,\delta P[U^c]\rangle <0$ therefore $(\partial_c U^c,\partial_xU^c,W)$ is free. But $\delta^2E$ is thus nonpositive 
on a dimension $3$ space, which contradicts lemma \ref{posE}. As a consequence 
\begin{equation}\label{weakpos}
\forall\, W\in (\delta P[U^c],\ \partial_x U^c)^{\perp}\setminus\{0\} ,\ 
\langle \delta^2EW,W\rangle >0.
\end{equation}
The improved inequality $\langle \delta^2EW,W\rangle \gtrsim \|W\|_{\mathcal{H}^0}^2$ follows from a (probably standard) compactness 
argument : consider a sequence $V_n$ of $(\delta P[U^c],\ \partial_x U^c)^{\perp}$ such that $\|V_n\|_{\mathcal{H}^0}=1$ 
and $\langle \delta^2EV_n,V_n\rangle\to 0$. Using lemma $\ref{posE}$ we write $V_n=\alpha_nU_-+\beta_n\partial_xU^c+
W_n$, $W_n\in \mathcal{G}$. 
By assumption, $\beta_n=0$ and up to an extraction, $\alpha_n\to_{n\to \infty} \alpha,\ W_n\rightharpoonup 
W \in \mathcal{G}$. Denoting $-\lambda_-$ the negative eigenvalue,
\begin{eqnarray*}
1&=&\|V_n\|_{\mH}^2=\alpha_n^2+\|W_n\|_{\mathcal{H}^0}^2=\alpha^2+\lim_n \|W_n\|_{\mH}^2,\\ 
0&=&\lim_{n\to \infty} \langle \delta^2EV_n,V_n\rangle =\lim_{n\to \infty}-\lambda_-\alpha_n^2
+\langle\delta^2EW_n,W_n\rangle\geq -\lambda_-\alpha^2+c\liminf_n \|W_n\|_{\mathcal{H}^0}^2.
\end{eqnarray*}
This implies $\alpha\neq 0$.
Let $V=\alpha U_-+W$, then by weak convergence
\begin{equation*}
\langle \delta^2EV,V\rangle =-\lambda_-\alpha^2+\langle \delta^2EW,W\rangle 
\leq -\lambda_-\alpha^2+\lim_n \langle \delta^2EW_n,W_n\rangle =0,
\end{equation*}
but since $V$ is the weak limit of $V_n$, it belongs to $(\delta P[U^c],\partial_xU^c)^\perp$, and \eqref{weakpos} implies $V=0$, 
which contradicts $\alpha\neq 0$.

\end{proof}

As a consequence, we deduce the following linear stability result, whose proof 
will be a guideline for the computations in the multi-soliton case.
\begin{theo}\label{stabmono}
 Under the stability assumption \eqref{stabass}, the solution of 
 \begin{equation*}
 \left\{
 \begin{array}{lll}
 \partial_tU(x,t)&=&J\partial_x\delta^2H[U^c(x-ct)]U(x,t),\\
 U|_{t=0}&=&U_0,
 \end{array}\right.
 \end{equation*} 
 satisfies for $t\in \R$
 \begin{equation*}
 \|U(t)\|_{\mathcal{H}^0}\lesssim (1+|t|) \|U_0\|_{\mathcal{H}^0}.
 \end{equation*}
\end{theo}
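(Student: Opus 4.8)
The plan is to pass to the frame moving with the soliton, split the solution along the (generalized) kernel of the linearized operator using the decomposition \eqref{pseudodiag}, and then combine two conservation laws with the coercivity of $\delta^2E[U^c]$; the linear-in-time growth will come solely from the Jordan relation \eqref{jordan} (equivalently, from the fact that $\partial_xU^c$ is a genuine, not generalized, kernel element).

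First I would reduce to smooth, rapidly decaying $U_0$ by density, and set $\tilde U(x,t)=U(x+ct,t)$. Since $\delta^2P=-J$ and $J^2=\mathrm{Id}$, one has $J\partial_x(-c\,\delta^2P)=c\,\partial_x$, so $\tilde U$ solves $\partial_t\tilde U=\mathcal L\tilde U$ with $\mathcal L:=J\partial_x\delta^2E[U^c]$ and $\tilde U(0)=U_0$, while $\|U(t)\|_{\mathcal{H}^0}=\|\tilde U(t)\|_{\mathcal{H}^0}$ because the $\mathcal{H}^0$-norm is translation invariant. Because $\delta^2E[U^c]$ is symmetric and $J\partial_x$ antisymmetric, $\frac{d}{dt}\langle\delta^2E[U^c]\tilde U,\tilde U\rangle=2\langle\delta^2E[U^c]\tilde U,J\partial_x\delta^2E[U^c]\tilde U\rangle=0$; and, using $\delta P[U^c]=-JU^c$, the identity $\langle Ja,Jb\rangle=\langle a,b\rangle$ and \eqref{kernel}, $\frac{d}{dt}\langle\delta P[U^c],\tilde U\rangle=\langle\delta P[U^c],J\partial_x\delta^2E[U^c]\tilde U\rangle=\langle\delta^2E[U^c]\,\partial_xU^c,\tilde U\rangle=0$. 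Hence both $\langle\delta^2E[U^c]\tilde U(t),\tilde U(t)\rangle$ and $\langle\delta P[U^c],\tilde U(t)\rangle$ equal their values at $t=0$, which are $O(\|U_0\|_{\mathcal{H}^0}^2)$ and $O(\|U_0\|_{\mathcal{H}^0})$ by Cauchy--Schwarz and the continuity of the bilinear form (cf.\ \eqref{bilin}).

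Next I would write $\tilde U(t)=\alpha(t)\delta P[U^c]+\beta(t)\partial_xU^c+W(t)$ as in \eqref{pseudodiag}. Since $\delta P[U^c]$ and $\partial_xU^c$ are $L^2$-orthogonal, $\alpha(t)=\langle\tilde U(t),\delta P[U^c]\rangle/\|\delta P[U^c]\|_{L^2}^2$ and $\beta(t)=\langle\tilde U(t),\partial_xU^c\rangle/\|\partial_xU^c\|_{L^2}^2$. Conservation of $\langle\delta P[U^c],\tilde U(t)\rangle$ then gives $\alpha(t)\equiv\alpha(0)$ with $|\alpha(0)|\lesssim\|U_0\|_{\mathcal{H}^0}$, and the coercivity in \eqref{pseudodiag} combined with conservation of the quadratic form yields $\|W(t)\|_{\mathcal{H}^0}^2\lesssim\langle\delta^2E[U^c]U_0,U_0\rangle+C\alpha(0)^2\lesssim\|U_0\|_{\mathcal{H}^0}^2$, uniformly in $t$. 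Only $\beta$ is left; differentiating and using $\mathcal L\,\partial_xU^c=J\partial_x\delta^2E[U^c]\partial_xU^c=0$ (by \eqref{kernel}), so that the $\beta$-term drops out,
\[
\|\partial_xU^c\|_{L^2}^2\,\dot\beta(t)=\langle\mathcal L\tilde U(t),\partial_xU^c\rangle=\alpha(t)\langle\mathcal L\,\delta P[U^c],\partial_xU^c\rangle+\langle\mathcal L W(t),\partial_xU^c\rangle .
\]
Moving $\mathcal L$ onto the Schwartz-class profile $\partial_xU^c$ (formal adjoint $-\delta^2E[U^c]J\partial_x$) bounds the right-hand side by $C\big(|\alpha(t)|+\|W(t)\|_{\mathcal{H}^0}\big)\lesssim\|U_0\|_{\mathcal{H}^0}$, whence $|\dot\beta(t)|\lesssim\|U_0\|_{\mathcal{H}^0}$ and $|\beta(t)|\le|\beta(0)|+C|t|\,\|U_0\|_{\mathcal{H}^0}\lesssim(1+|t|)\|U_0\|_{\mathcal{H}^0}$. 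Putting the three pieces together, $\|\tilde U(t)\|_{\mathcal{H}^0}\le|\alpha(t)|\,\|\delta P[U^c]\|_{\mathcal{H}^0}+|\beta(t)|\,\|\partial_xU^c\|_{\mathcal{H}^0}+\|W(t)\|_{\mathcal{H}^0}\lesssim(1+|t|)\|U_0\|_{\mathcal{H}^0}$, which is the claim.

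The computations are routine once this structure is set up; the only genuinely delicate point is the term $\langle\mathcal L W,\partial_xU^c\rangle$ with $W$ merely in $\mathcal{H}^0$, which must be handled by integrating by parts so that every derivative falls on the exponentially decaying profile $\partial_xU^c$. The other point needing care is the upstream justification of the two conservation laws by approximation with smooth solutions; since $\mathcal L$ generates a reversible flow, this also yields the estimate for $t<0$, consistent with the $(1+|t|)$ bound stated.
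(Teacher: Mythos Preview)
Your proof is correct and follows essentially the same strategy as the paper: conserve the quadratic form $\langle\delta^2E[U^c]\,\cdot\,,\cdot\rangle$, use the decomposition \eqref{pseudodiag} together with its coercivity to control $W$ and $\alpha$, and then bound $\beta'$ directly to obtain the linear-in-time growth. The only substantive difference is that you first pass to the comoving frame, obtaining the autonomous generator $\mathcal L=J\partial_x\delta^2E[U^c]$ and a second (linear) conserved quantity $\langle\delta P[U^c],\tilde U\rangle$ which immediately gives $\alpha(t)\equiv\alpha(0)$; the paper instead stays in the original frame and recovers the same facts via the commutator identity $[\partial_t+c\partial_x,\delta^2H]=0$ and a direct computation of $\alpha'(t)$. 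As the paper itself remarks right after the proof, your route is the more natural one for a single soliton, but the fixed-frame computation was chosen deliberately as a warm-up for Section~\ref{stabdur}, where no global Galilean frame is available for the multi-soliton.
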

\begin{proof}
For conciseness we write $\delta^2H$ for $\delta^2H[U^c]$. Using 
$\delta^2P=-J$
\begin{eqnarray*}
\frac{d}{dt}\langle (\delta^2H-c\delta^2P)U,U\rangle &=&
\langle [\partial_t,\delta^2H]U,U\rangle +2\langle \delta^2HJ\partial_x\delta^2HU,U
\rangle +c\langle [\partial_x,\delta^2H]U,U\rangle \\
&=& \langle [\partial_t+c\partial_x,\delta^2H]U,U\rangle.
\end{eqnarray*}
Since the coefficients of the operator $\delta^2H$ only depend on $x-ct$, 
$[\partial_t+c\partial_x,\delta^2H]=0$, so
\begin{equation}\label{enlin}
\frac{d}{dt}\langle 
(\delta^2H-c\delta^2P) U,U\rangle=0.
\end{equation}
We use the decomposition \eqref{pseudodiag} for the solution 
$U(t)=\alpha(t) \delta P[U^c(x-ct)]+\beta(t) \partial_x U^c+W(t)$.
Since 
$\partial_x U^c \in \text{Ker}(\delta^2E)$ 
\begin{equation*}
 \alpha'(t)=\frac{\langle J\partial_x\delta^2HU,\delta^2PU^c\rangle+\langle 
 U,-c\partial_x\delta^2PU^c\rangle }{\langle \delta P[U_c], \delta P[u_c]\rangle }
 =\frac{\langle U,(\delta^2H-c\delta^2P)\partial_xU^c\rangle }
 {\langle \delta P[U_c], \delta P[u_c]\rangle }=0.
\end{equation*}
By the conservation \eqref{enlin} and the continuity of $\delta^2H$ as a bilinear form 
\eqref{bilin}
\begin{eqnarray*}
\langle \delta^2EU(0),U(0)\rangle=\langle \delta^2EU(t),U(t)\rangle =\langle \delta^2E(\alpha \delta P[U^c]+W), 
\alpha \delta P[U^c]+W\rangle
\gtrsim \|W\|_{\mathcal{H}^0}^2-C\alpha^2\\
\Rightarrow  \|W(t)\|_{\mathcal{H}^0}^2\lesssim \alpha(0)^2+\|U(0)\|_{\mathcal{H}^0}^2.
\end{eqnarray*}
Moreover $|\alpha(0)|=\langle U(0),\delta P[U^c]\rangle /\|\delta P[U^c]\|_{\mH}^2\rangle \lesssim \|U(0)\|_{\mathcal{H}^0}$.
The last term is estimated thanks to the bounds on $\alpha,V$ :
\begin{equation*}
| \beta'(t)|=\bigg|\frac{d}{dt}\frac{\langle U(t),\partial_xU^c\rangle}
{\|\partial_xU^c\|_{L^2}^2}\bigg|=
\bigg|\frac{\langle J\partial_x\delta^2E(\alpha \delta P[U^c]+V),\partial_xU^c\rangle}{\|\partial_xU^c\|_{L^2}^2}\bigg|
\lesssim \|U(0)\|_{L^2},
 \end{equation*}
and by integration, $|\beta(t)|\leq |\beta(0)|+|t|\|U(0)\|_{\mathcal{H}^0}$.
\end{proof}
\begin{rmq}
 The linear growth in time is unavoidable, indeed we have $$J\partial_x\delta^2E\partial_c U^c
 =\partial_xJ\delta P[U^c]=\partial_xU^c\in \text{Ker}(J\partial_x\delta^2E),$$ therefore $(\partial_cU^c,\partial_xU^c)$
 is  associated to a Jordan block of the eigenvalue $0$ of $J\partial_x\delta^2E$.
\end{rmq}
\begin{rmq}
Of course to study the stability of a single soliton it is much more natural 
to do the galilean change of variable $y=x-ct$ and consider the \emph{autonomous} linear 
problem $\partial_tU=(\delta^2H-c\delta^2P)[U^c(y)]U$. The proof of theorem 
\ref{stabmono} is simplified in this frame. Nevertheless, when considering 
multi-soliton such a change variable is not available and this first simple case 
is a good warm up before the more technical computations of section \ref{stabdur}.
\end{rmq}
\subsection{Stability near multiple traveling waves}\label{stabdur}
We recall that
the multi-soliton is defined as 
$$S(x,t)=V^{c_1}(x-c_1t)+\sum_{k=2}^nU^{c_k}(x-c_kt-\sum_{j=2}^kA_j)=V_1+\sum_2^nU_k,\ A_j\geq A,$$ where $V^{c_k}=V^{c_k}_{\text{ref}}+U^{c_k}$ are traveling waves, $c_1<c_2\cdots<c_n$.
We have exponential decay
\begin{equation}\label{decay}
\forall\,p\geq 0,\ 1\leq k\leq n,\ 
\exists\,\alpha>0:\ |\partial_x^pU_k(x,t)|\lesssim e^{-\alpha|x-c_kt-\sum_2^kA_j|}.
\end{equation}
The aim of this section is to get bounds on the flow 
associated to $J\partial_x\delta^2H[S+\eta]$ where $\eta$ is a small perturbation of limited 
smoothness that 
depends on $x$ and $t$. 
When there is no ambiguity, we write $\delta^2H$ for $\delta^2H[S+\eta]$ and 
\begin{equation*}
\delta^2E_k:=\delta^2H[V_k]-c_k\delta^2P.
\end{equation*}

\begin{lemma}\label{estim0}
For $s\geq 0$, let $U$ solve
\begin{equation}\label{linmultisoliton}
\left\{
\begin{array}{ll}
\partial_tU=J\partial_x\delta^2H[S+\eta]U,\\
U|_{t=s}=U_0,
\end{array}\right.
\end{equation}
with $\eta$ a smooth perturbation. 
There exist $C$ and $\varepsilon_0$ such that for 
$\varepsilon:=1/A+\|\eta\|_{X^1}\leq \varepsilon_0$, 
\begin{equation*}
\forall\, t\geq 0,\ \|U(t)\|_{\mathcal{H}^0}\leq C(1+|t-s|)e^{C\varepsilon^{1/4}|t-s|}
\|U_0\|_{\mathcal{H}^0}.
\end{equation*}
\end{lemma}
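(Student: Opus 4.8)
The plan is to mimic the proof of Theorem~\ref{stabmono}, replacing the single conserved quantity $\langle\delta^2E[U^c]U,U\rangle$ by a spatially \emph{localized} modified energy and handling each traveling wave $V_k=V^{c_k}$ with its own modulation parameters. First I would fix a time-dependent partition of unity $\sum_{k=1}^n\chi_k(x,t)=1$ with $\chi_k\equiv1$ on a neighbourhood of the $k$-th trajectory $x=c_kt+\sum_{j\le k}A_j$ (so $\chi_k$ moves at speed $c_k$ near $U_k$) and with the $\chi_k/\chi_{k+1}$ transition zone carried by a line of intermediate speed in $(c_k,c_{k+1})$. Since the traveling waves drift apart at a linear rate, on $\mathrm{supp}\,\partial_x\chi_k$ every $U_j$ and the non-constant part of the coefficients of $\delta^2H[V_j]$ are $O(e^{-\alpha A})$, hence negligible in front of $\varepsilon$. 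Writing the solution of \eqref{linmultisoliton} as $U=\sum_k\chi_kU$, I would then decompose, via the lemma leading to \eqref{pseudodiag} (resp. \eqref{decompkink} when $V^{c_1}$ is a kink, in which case there is no $\alpha_1$),
\[
\chi_kU=\alpha_k\,\delta P[U_k]+\beta_k\,\partial_xV_k+W_k,\qquad W_k\perp\big(\delta P[U_k],\,\partial_xV_k\big),
\]
so that $\|U\|_{\mathcal{H}^0}^2\sim\sum_k(\alpha_k^2+\beta_k^2+\|W_k\|_{\mathcal{H}^0}^2)$ up to exponentially small overlap terms.

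Next I would introduce the localized energy $\mathcal E(t)=\sum_k\langle\delta^2E_k\,\chi_kU,\chi_kU\rangle$ (possibly corrected by a localized momentum $\sum_kc_k\int\chi_k\langle\delta^2P\,U,U\rangle$ to neutralise the jump of $c_k$ across the transition zones, as in the Martel--Merle localization), and prove the almost-conservation $|\mathcal E'(t)|\lesssim\varepsilon\,\|U(t)\|_{\mathcal{H}^0}^2$. The mechanism is that of \eqref{enlin}: since $\partial_tU=J\partial_x\delta^2H[S+\eta]U$, the antisymmetry of $J\partial_x$, the symmetry of $\delta^2H$ and $[\partial_t+c_k\partial_x,\delta^2E_k]=0$ make the would-be main term vanish; what remains is controlled by (i) the cutoff commutators $[\chi_k,J\partial_x\delta^2H]$ and the drift terms $(\partial_t+\mathrm{speed}\cdot\partial_x)\chi_k$, supported where everything but $\eta$ is $O(e^{-\alpha A})$; (ii) the replacement of $\delta^2H[S+\eta]$ by $\delta^2H[V_k]$ on $\mathrm{supp}\,\chi_k$, whose error is $O(e^{-\alpha A})+O(\|\eta\|_{X^1})$; (iii) a $\partial_t\eta$ contribution bounded by $\|\partial_t\eta\|_{\mathcal{H}^1}\le\|\eta\|_{X^1}$. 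Combined with the coercivity of Lemma~\ref{posE} this yields
\[
\sum_k\|W_k(t)\|_{\mathcal{H}^0}^2\ \lesssim\ \mathcal E(t)+\sum_k\alpha_k(t)^2\ \lesssim\ \|U_0\|_{\mathcal{H}^0}^2+\sum_k\alpha_k(t)^2+\varepsilon\int_s^t\|U\|_{\mathcal{H}^0}^2.
\]

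Then I would follow the modulation computations of Theorem~\ref{stabmono} essentially verbatim: for $\alpha_k$ the leading term cancels thanks to $\delta^2E_k\,\partial_xV_k=0$, $\delta^2P=-J$ and the moving-frame identity $\partial_t\delta P[U_k]=-c_k\partial_x\delta P[U_k]$ (cf. \eqref{kernel}, \eqref{cdiff}), leaving $|\alpha_k'(t)|\lesssim\varepsilon\,\|U(t)\|_{\mathcal{H}^0}$; for $\beta_k$ the $\beta_k\partial_xV_k$-contribution drops because $\partial_xV_k\in\ker(J\partial_x\delta^2E_k)$, so $|\beta_k'(t)|\lesssim|\alpha_k(t)|+\|W_k(t)\|_{\mathcal{H}^0}+\varepsilon\|U(t)\|_{\mathcal{H}^0}$, which is the only source of genuine growth. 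Finally I would close by a bootstrap on time intervals of length $\sim\varepsilon^{-1/3}$: on such an interval the three displayed inequalities give $\sum_k(\alpha_k^2+\|W_k\|_{\mathcal{H}^0}^2)\lesssim\|U(s)\|_{\mathcal{H}^0}^2$ and, after integrating the $\beta_k$ equation, $\sum_k\beta_k^2\lesssim\|U(s)\|_{\mathcal{H}^0}^2(1+|t-s|)^2$, hence $\|U(t)\|_{\mathcal{H}^0}\lesssim\|U(s)\|_{\mathcal{H}^0}(1+|t-s|)$ --- the length $\varepsilon^{-1/3}$ being exactly what makes the feedback of $\beta$ into the energy (a term of size $\varepsilon|t-s|^3$) absorbable. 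Iterating over $\sim\varepsilon^{1/3}|t-s|$ such intervals turns the per-step factor $1+\varepsilon^{-1/3}$ into $e^{C\varepsilon^{1/3}|\ln\varepsilon|\,|t-s|}\le e^{C\varepsilon^{1/4}|t-s|}$ for $\varepsilon\le\varepsilon_0$, which together with the linear factor gives the claim.

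The step I expect to be the real obstacle is the almost-conservation of $\mathcal E$, and within it the bookkeeping of the transition zones: their widths must be chosen (growing in time, or a suitable fraction of $A$) so that the terms carrying $\partial_x\chi_k$ land where the solitons are exponentially small, while the mismatch of the speeds $c_k$ across those zones does not destroy the coercivity of $\sum_k\langle\delta^2E_k\chi_kU,\chi_kU\rangle$ --- this is what forces the momentum correction. The Gronwall bootstrap that produces precisely the exponent $\varepsilon^{1/4}$ is the other delicate point, but once the linear growth of the kernel directions $\beta_k$ has been isolated it is essentially dictated by the scaling $\varepsilon|t-s|^3\lesssim1$.
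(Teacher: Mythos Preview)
Your strategy is the right one and matches the paper's proof closely: localize via a partition of unity following the trajectories, decompose each $\chi_kU$ along $(\delta P[U_k],\partial_xV_k,W_k)$, prove an almost-conservation $|\mathcal E'|\lesssim\varepsilon\|U\|_{\mathcal H^0}^2$, and derive the same modulation equations $|\alpha_k'|\lesssim\varepsilon\|U\|$, $|\beta_k'|\lesssim|\alpha_k|+\|W_k\|+\varepsilon\|U\|$. Two points of comparison are worth recording.

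First, a cosmetic but convenient choice: the paper takes $\sum_k\chi_k^2=1$ rather than $\sum_k\chi_k=1$, and defines the energy as $\widetilde E=\langle\delta^2H[S+\eta]U,U\rangle-\sum_kc_k\langle\delta^2P\,\chi_kU,\chi_kU\rangle$. The quadratic partition lets one write $\langle\delta^2H\,U,U\rangle=\sum_k\langle\delta^2H\,\chi_k^2U,U\rangle$ exactly and then commute one $\chi_k$ across, so the lower bound $\widetilde E\ge m\sum\|W_k\|^2-C_0\sum\alpha_k^2-C_1\varepsilon\sum\beta_k^2$ drops out cleanly; your $\sum\chi_k=1$ works too but produces cross terms $\langle\delta^2E_k\chi_kU,\chi_jU\rangle$ that need the decorrelation argument one extra time.

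Second, and more substantively, the closing step. Your bootstrap on intervals of length $\varepsilon^{-1/3}$ followed by iteration does close (with a careful choice of the proportionality constant so that $\varepsilon T^3$ is small, not merely $O(1)$), and the conversion $\varepsilon^{1/3}|\ln\varepsilon|\le C\varepsilon^{1/4}$ yields the stated exponent. The paper avoids this two-scale argument by building the $\beta$-weight directly into the energy: set $\widehat E=\widetilde E+(C_0+m)\sum\alpha_k^2+\varepsilon^{1/2}\sum\beta_k^2$; then $\widehat E\gtrsim\sum(\alpha_k^2+\|W_k\|^2)+\varepsilon^{1/2}\sum\beta_k^2$ and, using Young with the splitting $\varepsilon^{1/2}|\beta_k|(|\alpha_k|+\|W_k\|)\le\varepsilon^{1/4}(\alpha_k^2+\|W_k\|^2)+\varepsilon^{3/4}\beta_k^2$, one gets $|\widehat E'|\le C\varepsilon^{1/4}\widehat E$. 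A single Gronwall then gives the exponential factor; the linear prefactor $(1+|t-s|)$ is recovered afterwards by plugging the resulting bound on $\alpha_k,W_k$ back into $|\beta_k'|$ and integrating once. This produces the exponent $\varepsilon^{1/4}$ on the nose rather than through a logarithmic loss, and makes the $\varepsilon\to0$ limit (pure linear growth, as in Theorem~\ref{stabmono}) transparent.
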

\begin{rmq}
The estimate is not true for $t\leq 0$ as the key argument is that the distance 
between the traveling waves must be (in some sense) larger than $A$.
\end{rmq}

The proof requires some preliminaries that will be used through the section. 
Let $2c_0=\inf_{j<k} c_k-c_j$, and for $1\leq k<n$, $c_{k+1/2}=(c_k+c_{k+1})/2$.
We first define localizing functions : pick a nondecreasing $\chi\in C^\infty(\R)$, 
$\text{supp}(\chi)= [0,\infty],\ \chi|_{[1/2,\infty)}=1, 0<\chi<1$ on $(0,1/2)$, and set
\begin{eqnarray*}
\varphi_1(x,t)&=&1-\chi\bigg(\frac{x-c_{1+1/2}t-A_2/2}{A_2}\bigg),\\
\forall\, 2\leq k<n,\ \varphi_k(x,t)&=&\chi\bigg(\frac{x-c_{k-1/2}t-\big(
\sum_{2}^{k}A_j-A_k/2\big)}{A_k}\bigg)
\\&&\hspace{2cm}-\chi\bigg(\frac{x-c_{k+1/2}t-\big(\sum_2^kA_j+A_{k+1}/2\big)}{A_{k+1}}\bigg),\\
\varphi_n(x,t)&=&\chi\bigg(\frac{x-c_{n-1/2}t-\big(\sum_2^{n}A_j-A_{n}/2\big)}
{A_n}\bigg).
\end{eqnarray*}
It is easily seen that 
\begin{eqnarray*}
\text{supp}(\varphi_1)&=& (-\infty, c_{3/2}t+A_2],\\
\forall\,2\leq k\leq n-1,\  \text{supp}(\varphi_k)&=&[c_{k-1/2}t
+\sum_2^{k}A_j-\frac{A_k}{2},c_{k+1/2}t+\sum_2^{k+1}A_j],\\
\text{supp}(\varphi_n)&=&[c_{n-1/2}t+\sum_2^{n}A_j-A_n/2,\infty),
\end{eqnarray*}
and  $\di \sum_{k=1}^n \varphi_k^2\geq c>0$ for some constant independent of $x$. The localizing functions are then defined as 
\begin{equation}\label{partition}
 \chi_j=\frac{\varphi_j}{\sqrt{\sum_1^n \varphi_j^2}}\text{ so that } \sum \chi_j^2=1.
\end{equation}
Note that $\varphi_j$ and $\chi_j$ have same support. Thanks to \eqref{decay} we have the following
estimates, uniformly for $A$ large
\begin{eqnarray}
\label{bornechiprime}
\|\partial_x^j\partial_t^k\chi_j\|_{L^\infty_{x,t}}&=&O(1/A^{k+j})\text{ (slow variation )},\\
\label{bornechiu}
\ \forall\,j\neq k,\ (p,q)\in \N^2,\ r\geq 1,\ \exists\,\alpha>0:
\|\partial_x^{p}\partial_t^{q}U_k\|_{L^r_x(\text{supp}(\chi_j))}
&=&O(e^{-\alpha c_0t}/A), \\
\text{if } (p,q)\neq (0,0),\ r\geq 1,\
\|\partial_x^{p}\partial_t^{q}V_k\|_{L^r_x(\text{supp}(\chi_j))}
&=&O(e^{-\alpha c_0t}/A),\\
\nonumber&&\text{ (support decorrelation)}.
\end{eqnarray}

\begin{proof}[Proof of lemma \ref{estim0}]
In the spirit of the proof of theorem \ref{stabmono} we define the modified energy 
\begin{equation}
\widetilde{E}(t)=\langle \delta^2H[S+\eta]U(t),U(t)\rangle 
-\sum_{k=1}^n c_k\langle \delta^2P\chi_k U(t),\chi_k U(t)\rangle.
\end{equation}
Similarly to theorem \ref{stabmono}, the proof has three steps :
1) Control of $d\widetilde{E}/dt$, 2) control of $\|U(t)\|_{\mathcal{H}^0}^2$ by 
$\widetilde{E}$ up to a finite number of parameters, 3) Control of these parameters.\\
\vspace{2mm}\\
\textit{Step $1$:   Control of $d\widetilde{E}/dt$.} 
From basic computations, using $\delta^2P=-J,J^2=I$,
\begin{eqnarray*}
\frac{d}{dt} \widetilde{E}&=&\langle ([\partial_t,\delta^2H]+\delta^2HJ\partial_x\delta^2H)U,U\rangle 
+\sum_{k=1}^n2c_k\langle  \chi_k' JU,\chi_kU\rangle\\
&&+\sum_{k=1}^nc_k\bigg(\langle \chi_k\partial_x\delta^2H U,\chi_kU\rangle 
+\big(\langle \chi_kU,\chi_k\partial_x\delta^2H U\rangle\bigg)\\
&=&\langle [\partial_t,\delta^2H]U,U\rangle 
+\sum_{k=1}^n2c_k\langle  \chi_k' JU,\chi_kU\rangle\\
&&+\sum_{k=1}^nc_k\bigg(\langle [\chi_k^2,\partial_x\delta^2H] U,U\rangle 
+\langle [\partial_x,\delta^2H]\chi_k^2U,U\rangle\bigg)\\
&=&\sum_{k=1}^n \langle ([\partial_t,\delta^2H]+c_k[\partial_x,\delta^2H])\chi_k^2 U,U\rangle 
+c_k\big(2\langle  \chi_k' JU,\chi_kU\rangle
+c_k\langle [\chi_k^2,\partial_x\delta^2H]U,U\rangle\big)\\
&=&\sum_{k=1}^n C_{1,k}(t)+C_{2,k}(t)+C_{3,k}(t).
\end{eqnarray*}
We first point out that $X^1$ controls the 
$L^\infty$ norm, therefore for $\|\eta\|_{X^1}$ small enough
the density of $S+\eta$ remains bounded away from $0$ and the 
computations rules in \eqref{calcsobo},\eqref{calcsobo2} can be 
applied.\\
$C_{2,k}$ and $C_{3,k}$ are not difficult to control : let us write $[\chi_k^2,\partial_x\delta^2H]=(L_{i,j})_{1\leq i,j\leq 2}$ as a matrix of 
operators, $S+\eta=\begin{pmatrix}\rho_1\\v_1\end{pmatrix}$ and detail the estimate for 
$\langle L_{1,1}r,r\rangle$
\begin{eqnarray}
\nonumber
\langle L_{1,1}r,r\rangle&=&\langle \big[\chi_k^2, \partial_x\big(
(g'-K''(\partial_x\rho_1)^2-K'\partial_x^2\rho_1)-\partial_xK\partial_x\big)\big]r,\ 
r\rangle \\
\label{detail}
&=&-\langle 2\chi_k\partial_x(\chi_k)(g'-K''(\partial_x\rho_1)^2-K'\partial_x^2\rho_1)r,r
\rangle \\
\label{detail2}
&&-2\langle \chi_k\partial_x\chi_k\partial_x(K\partial_xr),r\rangle -
\langle [\chi_k^2,\partial_xK\partial_x]r,\partial_xr\rangle .
\end{eqnarray}
Using the Sobolev estimates \eqref{calcsobo} and \eqref{bornechiprime}, we find 
$\eqref{detail}\lesssim\|r\|_{H^1}^2/A$, the second one is estimated 
by an integration by part
\begin{equation*}
\big|-2\langle \chi_k\partial_x\chi_k\partial_x(K\partial_xr),r\rangle\big|
\leq \int_{\R}\big|K\partial_xr\partial_x\big(2\chi_k\partial_x\chi_k\,r\big)\big|dx
\lesssim \frac{1}{A}\|r(t)\|_{H^1}^2.
\end{equation*}
The last term in \eqref{detail2} is estimated similarly with the explicit commutator 
formula $[\chi_k^2,\partial_xK\partial_x]r=-2\partial_x(\chi_k^2)K\partial_xr
-\partial_x(\chi_k^2)\partial_x(Kr)$. Similar computations eventually 
lead to 
\begin{equation}\label{controlC2C3}
|C_{2,k}+C_{3,k}|\lesssim \frac{1}{A}\|U\|_{\mathcal{H}^0}^2.
\end{equation}
To bound $C_{1,k}$ we introduce the (bilinear) operator $\delta^3H$ such that 
\begin{equation}\label{derH}
[\partial_t,\delta^2H[S+\eta]]U=\delta^3H[S+\eta](U,\partial_t(S+\eta)).
\end{equation}
This is merely a convenient notation, as writing 
$S+\eta=\begin{pmatrix}  
\rho_1\\ v_1
\end{pmatrix}
$,
$\delta^3H(\cdot,\partial_t(S+\eta))$ is explicitly 
\begin{eqnarray}
\label{structop}
\delta^3H[S+\eta](\cdot,\partial_t(S+\eta))&=&
\begin{pmatrix}
\mathcal{M}_t& \partial_tv_1\\
\partial_t v_1 & \partial_t\rho_1
\end{pmatrix},\\
\nonumber 
\text{with }\mathcal{M}_tr&=&\bigg(g''\partial_t\rho_1
-\frac{K'''\partial_t\rho_1(\partial_x\rho_1)^2
+2K''\partial_x\rho_1\partial_{xt}\rho_1}{2}\bigg)r\\
\nonumber
&&-(K''\partial_t\rho_1\partial_x^2\rho_1+K'\partial_{xxt}\rho_1)r
-\partial_x (K'\partial_t\rho_1\partial_xr),
\end{eqnarray}
and we use the same notation for $[\partial_x,\delta^2H]:=\delta^3H[S](\cdot,\partial_xS)$.
We can thus rewrite using $S=V_1+\sum_{j=2}^n U_j$, with 
$\partial_t V_1=-c_1\partial_xV_1$,
$\partial_tU_j=-c_j\partial_xU_j$, for $k>1$
\begin{eqnarray*}
 C_{1,k}(t)&=&\langle \delta^3H(U,\partial_tS+c_k\partial_xS)\chi_k^2U,U\rangle
 +\langle \delta^3H(U,\partial_t\eta+c_k\partial_x\eta)\chi_k^2U,U\rangle\\
 &=& \langle \delta^3H(U,-c_1\partial_xV_1
 -\sum_{j\neq k}c_j\partial_xU_j)\chi_k^2U,U\rangle
 +\langle \delta^3H(U,\partial_t\eta+c_k\partial_x\eta)\chi_k^2U,U\rangle,
\end{eqnarray*}
and if $k=1$
\begin{eqnarray*}
 C_{1,1}(t)&=&
 \langle \delta^3H(U,-\sum_{j=2}^nc_j\partial_xU_j)\chi_1^2U,U\rangle
 +\langle \delta^3H(U,\partial_t\eta+c_1\partial_x\eta)\chi_1^2U,U\rangle.
\end{eqnarray*}
Now using the support decorrelation property \eqref{bornechiu} and the explicit form \eqref{structop} 
of $\delta^3H$ 
we obtain in both cases
\begin{equation}
 \label{estimC1}
|C_{1,k}(t)|\lesssim  \frac{e^{-\alpha c_0 t}}{A}\|U(t)\|_{\mathcal{H}^0}^2+(\|\eta(t)\|_{\mathcal{H}^2}+\|\partial_t\eta(t)\|_{\mathcal{H}^1})
\|U(t)\|_{\mathcal{H}^0}^2.
\end{equation}
Adding this estimate with \eqref{controlC2C3} gives
\begin{equation}
 \label{derivenergie}
 \big|\widetilde{E}'(t)\big|\lesssim \varepsilon\|U(t)\|_{\mathcal{H}^0}^2.
\vspace{2mm}
 \end{equation}
\textit{Step $2$: Lower bounds for $\widetilde{E}$.} The key here is the decompositions \eqref{decompkink} and
\eqref{pseudodiag}. For  $1\leq k\leq n$ we set 
$\chi_kU(t)=\alpha_k(t)\delta P[U_k]+\beta_k(t)\partial_x V_k+W_k(t)$, with the convention that if $V_1$ is a kink, $\alpha_1=0$ (in this case the relevant decomposition is \eqref{decompkink}).
Using the translation invariance of the $L^2$ norm, the lower bound in \eqref{pseudodiag} gives for 
some $m,C>0$
$$\forall\,1\leq k\leq n,\ \langle \delta^2E[U_k]\chi_kU,\chi_kU\rangle 
\geq m\|W_k\|_{\mathcal{H}^0}^2-C\alpha_k^2.$$  
According to this, we split $\widetilde{E}$ as a sum of localized terms and remainders:
\begin{eqnarray*}
\nonumber
 \widetilde{E}&=&\langle \delta^2H[S+\eta]U,U\rangle -\sum_{k=1}^n c_k\langle \delta^2P\chi_kU,
 \chi_kU\rangle\\
 \nonumber
 &=&\sum_{k=1}^n\langle \delta^2H[S+\eta]\chi_k^2U,U\rangle - c_k\langle \delta^2P\chi_kU,
 \chi_kU\rangle,
 \end{eqnarray*}
 so commuting $\delta^2H$ and $\chi_k$ we obtain
 \begin{eqnarray}
 \nonumber \widetilde{E}(t)
 &=&\sum_{k=1}^n\langle \delta^2E_k\chi_kU,\chi_kU\rangle 
 +\langle (\delta^2H[S+\eta]-\delta^2H[U_k])\chi_kU,\chi_kU\rangle\\
\nonumber
 &&\hspace{1cm}+\langle [\delta^2H[S+\eta],\chi_k]\chi_kU,U\rangle\\
 \nonumber
 &\geq & \sum_{k=1}^n m\|W_k(t)\|_{\mathcal{H}^0}^2-C\alpha_k^2(t)
 +\langle (\delta^2H[S+\eta]-\delta^2H[V_k])\chi_kU,\chi_kU\rangle\\
 \label{minorEstep1}&&\hspace{1cm}+\langle [\delta^2H,\chi_k]\chi_kU,U\rangle.
 \end{eqnarray}
The last term is estimated as in \eqref{detail2},
\begin{equation}\label{resteE1}
\big|\langle [\delta^2H,\chi_k]\chi_kU(t),U(t)\rangle\big|
\lesssim \varepsilon\|U(t)\|_{\mathcal{H}^0}^2.
\end{equation}
Thanks to the support decorrelation \eqref{bornechiu} and calculus rules \eqref{calcsobo}, one can check 
\begin{equation}\label{resteE2}
\langle (\delta^2H[S+\eta]-\delta^2H[V_k])\chi_kU,\chi_kU\rangle\lesssim 
\bigg(\frac{e^{-\alpha c_0t}}{A}+\|\eta\|_{\mathcal{H}^2}\bigg)\|U(t)\|_{\mathcal{H}^0}^2.
\end{equation}
For example the term associated to $\partial_x(K\partial_xr)$ is controlled as follows
\begin{eqnarray*}
\big|\langle \partial_x\big((K(S+\eta)-K(V_k))\partial_x (\chi_kr)\big),\,\chi_kr\rangle\big|
&\leq& 
\big|\|K(S+ \eta)-K(V_k)\|_{L^\infty(\text{supp}(\chi_k))}\|\partial_x(\chi_kr)\|_{L^2}^2\\
&\lesssim & \bigg(\frac{e^{-\alpha c_0t}}{A}+\|\eta(t)\|_{\mathcal{H}^0}\bigg)
\|r(t)\|_{H^1}^2.
\end{eqnarray*}
Note that $\|U\|_{\mathcal{H}^0}^2\lesssim \sum_1^n\|W_k\|_{\mathcal{H}^0}^2+\alpha_k^2+\beta_k^2$,
so for $\varepsilon$ small enough, from \eqref{minorEstep1},\eqref{resteE1},\eqref{resteE2}, there exists constants
$m,C_0,C_1$ ($m$ is not the same as in \eqref{minorEstep1}) such that
\begin{equation}\label{minorEalter}
\widetilde{E}(t)\geq \sum_{k=1}^n m\|W_k(t)\|_{\mathcal{H}^0}^2-C_0\alpha_k^2(t)
-C_1\varepsilon\beta_k^2(t).
\end{equation}
\textit{Step $3$: Control of the parameters.} Once more it is a matter of repeating 
the proof of theorem \ref{stabmono} with some commutators. Let us start with 
$(\alpha_k(t))_{1\leq k\leq n}$ ($k>2$ when $V_1$ is a kink): 
\begin{eqnarray*}
\alpha_k'(t)&=&\frac{d}{dt}\frac{\langle \chi_kU,
\delta^2P U_k\rangle}{\|\delta^2P U_k\|_{L^2}^2}\\
&=&\frac{\langle (\partial_t\chi_k)U,\delta^2P U_k\rangle 
+\langle \chi_kJ\partial_x\delta^2H[S+\eta]U,\delta^2P U_k\rangle +
\langle \chi_kU,-c_k\delta^2P \partial_xU_k\rangle}{\|\delta^2P U_k\|_{L^2}^2}\\
&=&\frac{\langle (\partial_t\chi_k)U,\delta^2P U_k\rangle 
+\langle [\chi_k,J\partial_x\delta^2H[S+\eta]]U,\delta^2P U_k\rangle
}{\|\delta^2P U_k\|_{L^2}^2}
\\
&&+\frac{\langle J\partial_x(\delta^2H[S+\eta]-\delta^2H[V_k])
\chi_kU,\delta^2P U_k\rangle+\langle \chi_kU,\delta^2E_k\partial_xU_k\rangle}
{\|\delta^2P U_k\|_{L^2}^2}.
\end{eqnarray*}
There are four terms. The fourth one is actually $0$, thanks to identity 
\eqref{kernel}. From the same argument as for $C_{2,k},C_{3,k}$ in \eqref{detail2}, 
the first and second ones are $O(|U(t)\|_{\mathcal{H}^0}/A)$.
Using integration by part, the smoothness of $U_k$ and the
same argument as for \eqref{resteE2}, the third one is 
$O(\|U(t)\|_{\mathcal{H}^0}(e^{-\alpha c_0t}/A+\|\eta\|_{\mathcal{H}^2}))$. To summarize
\begin{equation}\label{estimalpha}
\forall\,1\leq k\leq n,\ t\geq 0,\ |\alpha_k'(t)|\lesssim \varepsilon\|U(t)\|_{\mathcal{H}^0}.
\end{equation}
We bound now $\beta_k(t)$:
\begin{eqnarray*}
 \|\partial_xV_k\|_{L^2}^2\beta_k'(t)&=&\frac{d}{dt}\langle \chi_kU,\partial_x V_k\rangle
 \\
 &=&\langle (\partial_t\chi_k)U,\partial_xV_k\rangle
 +\langle \chi_kJ\partial_x\delta^2HU,\partial_xV_k\rangle 
 +\langle \chi_kU,-c_k\partial_x^2V_k\rangle
\\
&=&\langle (\partial_t\chi_k)U,\partial_xU_k\rangle
 +\langle J\partial_x\delta^2E_k(\chi_kU),\partial_x V_k\rangle \\
&& +\langle \chi_kJ\partial_x(\delta^2H[S+\eta]-\delta^2H[V_k])U,\partial_xV_k\rangle
 +\langle [\chi_k,J\partial_x\delta^2H[V_k]]U,\partial_xV_k \rangle.
 \end{eqnarray*}
Since $\chi_kU=\alpha_k \delta P[U_k]+\beta_k\partial_xV_k+W_k$, with 
$\delta^2E_k\partial_xV_k=0$, we have 
\begin{equation*}
\big|\langle J\partial_x\delta^2E_k\big(\chi_kU(t)\big),\partial_x V_k(t)\rangle \big|
\lesssim |\alpha_k(t)|+\|W_k(t)\|_{\mathcal{H}^0}.
\end{equation*}
The other terms are estimated as for $\alpha_k'$, leading to 
\begin{equation}\label{estimbeta}
 |\beta_k'(t)|\lesssim |\alpha_k(t)|+\|W_k(t)\|_{\mathcal{H}^0}+
 \varepsilon\|U(t)\|_{\mathcal{H}^0}.
\end{equation}
\textit{Conclusion.}
Let us rewrite \eqref{derivenergie},\eqref{estimalpha},\eqref{estimbeta} : there exists some $C>0$ such that 
for $\varepsilon$ small enough 
\begin{eqnarray*}
\big| \widetilde{E}'(t)\big|&\leq& C\varepsilon\|U(t)\|_{\mathcal{H}^0}^2ds,
\\
|\alpha_k'(t)|&\leq& C\varepsilon\|U(t)\|_{\mathcal{H}^0},
\\
|\beta_k'(t)|&\leq& C\big(|\alpha_k(t)|
+\|V_k(t)\|_{\mathcal{H}^0}+\varepsilon\|U(t)\|_{\mathcal{H}^0}\big).
\end{eqnarray*}
With the same constants as in \eqref{minorEalter}, let 
$\di \widehat{E}(t):=\widetilde{E}(t)+\sum_1^n (C_0+m)\alpha_k^2+\varepsilon^{1/2}\beta_k^2$, 
then for $\varepsilon$ small enough $\widehat{E}(t)\gtrsim \sum_1^n \alpha_k^2+\|W_k\|_{\mathcal{H}^0}^2+\varepsilon^{1/2}\beta_k^2$, and
\begin{eqnarray*}
\big|\widehat{E}'(t)\big|&\leq& \sum_{k=1}^nC\varepsilon \big( \|W_k\|_{\mathcal{H}^0}^2
+\alpha_k^2+\beta_k^2\big)+C\varepsilon^{1/2}|\beta_k|\big(|\alpha_k|+\|W_k\|_{\mathcal{H}^0}
+\varepsilon\|U\|_{\mathcal{H}^0}\big)\\
&\leq& C\varepsilon^{1/2}\widetilde{E}+C\varepsilon^{1/2}\bigg(\frac{\alpha_k^2
+\|W_k\|_{\mathcal{H}^0}^2}{\varepsilon^{1/4}}+\varepsilon^{1/4}\beta_k^2\bigg)\\
&\leq& C\varepsilon^{1/4}\widehat{E}(t).
\end{eqnarray*}
With Gronwall's lemma and thanks to \eqref{minorEalter} we get 
\begin{equation*}
\sum_{k=1}^nm\big(\|W_k(t)\|_{\mathcal{H}^0}^2+\alpha_k^2(t)\big)
+\varepsilon^{1/2}\beta_k^2(t)\leq
\widehat{E}(t)\leq \widehat{E}(s)e^{C\varepsilon^{1/4}|t-s|}.
\end{equation*}
We can assume $\varepsilon\leq 1/(4C_1^2)$ so that $\varepsilon^{1/2}-C_1\varepsilon\geq \varepsilon^{1/2}/2$, and
\begin{equation*}
\sum_{k=1}^n\big(\|W_k(t)\|_{\mathcal{H}^0}^2+\alpha_k(t)^2\big)\lesssim
\|U(s)\|_{\mathcal{H}^0}^2e^{C\varepsilon^{1/4}|t-s|}
=\|U_0\|_{\mathcal{H}^0}^2e^{C\varepsilon^{1/4}|t-s|}.
\end{equation*}
To bound $\beta_k$ independently of $\varepsilon$, we plug the estimate above in the 
differential inequality \eqref{estimbeta}
\begin{eqnarray}
\nonumber\sum_{k=1}^n |\beta_k'(t)|&\leq& M\bigg(\sum_{k=1}^n|\alpha_k(t)|
+\|W_k(t)\|_{\mathcal{H}^0}+\varepsilon \|U(t)\|_{\mathcal{H}^0}\bigg)\\
&\leq& M_1\|U_0\|_{\mathcal{H}^0}e^{C\varepsilon^{1/4}|t-s|/2}
\label{diffbeta}
+\frac{C\varepsilon^{1/4}}{2}\sum_{k=1}^n |\beta_k(t)|.
\end{eqnarray}
(for $\varepsilon$ small enough so that $M\varepsilon\leq C\varepsilon^{1/4}/2$). 
\eqref{diffbeta} has the form $(e^{-\delta |t-s|}\varphi(t))'\leq
Me^{\mu|t-s|}$, so by integration on $[s,t]$
\begin{eqnarray*}
\sum_{k=1}^n |\beta_k(t)|&\leq& \sum_{k=1}^n|\beta_k(s)|e^{C\varepsilon^{1/4}|t-s|/2}+
\frac{2M_1\|U_0\|_{\mathcal{H}^0}}{C\varepsilon^{1/4}}\big(e^{C\varepsilon^{1/4}|t-s|}-1\big)\\
&\lesssim& \|U_0\|_{\mathcal{H}^0}\big(1+|t-s|\big)e^{C\varepsilon^{1/4}|t-s|}.
\end{eqnarray*}
Combining this with the estimate on $W_k,\alpha_k$, we conclude
\begin{equation*}
\|U(t)\|_{\mathcal{H}^0}\lesssim \|U_0\|_{\mathcal{H}^0}(1+|t-s|)e^{C\varepsilon^{1/4}|t-s|}.
\end{equation*}
\end{proof}
It is useful to restate the result of lemma \ref{estim0} in a slightly more abstract way: 
\begin{coro}\label{estimresolvent}
Let $R_\eta(t,s)$ the resolvent operator associated to $\partial_tU=J\partial_x\delta^2H[S+\eta]U$. \\
There exists $\varepsilon_0$ and $C$ such that if $1/A+\|\eta\|_{X^1}:=\varepsilon\leq \varepsilon_0,\ 
t, s\geq 0$:
\begin{equation*}
\|R_\eta(t,s)\|_{\mathcal{L}(\mathcal{H}^0)}\leq (1+|t-s|)Ce^{C\varepsilon^{1/4}|t-s|}.
\end{equation*}
\end{coro}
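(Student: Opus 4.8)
The plan is to observe that this corollary merely repackages lemma \ref{estim0}, so the only genuinely new points are (i) checking that the solution operator $R_\eta(t,s)$ is well defined and bounded on $\mathcal{H}^0$, and (ii) removing the smoothness assumption on $\eta$ used in lemma \ref{estim0}. For point (i) I would first note that once $1/A+\|\eta\|_{X^1}\leq\varepsilon_0$ is small the density of $S+\eta$ stays bounded away from $0$, so the linear operator $J\partial_x\delta^2H[S+\eta]$ is amenable to exactly the change of unknown (due to Coquel) and the gauge of section \ref{secenergie}. Running the energy estimate underlying proposition \ref{mainenergy} on the linear equation itself (i.e.\ with $f^a=0$ and $V^a\equiv S+\eta$, so that the inequality becomes an a priori bound on a genuine solution rather than on a difference) yields $\|U(t)\|_{\mathcal{H}^0}\lesssim e^{C|t-s|}\|U_0\|_{\mathcal{H}^0}$; together with a standard regularization/uniqueness argument this produces a strongly continuous two-parameter family $R_\eta(t,s)\in\mathcal{L}(\mathcal{H}^0)$ with $R_\eta(s,s)=\mathrm{Id}$ and the cocycle identity, defined for all $t,s\geq0$ (the ``backward'' range $t<s$ is allowed since the a priori bound controls $|t-s|$ in either direction).

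Next I would record the quantitative bound in the smooth case: by definition $R_\eta(t,s)U_0=U(t)$, where $U$ solves \eqref{linmultisoliton} with $U|_{t=s}=U_0$, so lemma \ref{estim0} gives directly $\|R_\eta(t,s)U_0\|_{\mathcal{H}^0}\leq C(1+|t-s|)e^{C\varepsilon^{1/4}|t-s|}\|U_0\|_{\mathcal{H}^0}$ with $\varepsilon=1/A+\|\eta\|_{X^1}$ and $C$ independent of $U_0$. Taking the supremum over the unit ball of $\mathcal{H}^0$ is exactly the asserted operator estimate.

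Finally, for a general $\eta\in X^1$ with $1/A+\|\eta\|_{X^1}\leq\varepsilon_0$ I would approximate by mollifications $\eta_\delta\to\eta$ in $X^1$ with $\|\eta_\delta\|_{X^1}\leq\|\eta\|_{X^1}$ (possibly after shrinking $\varepsilon_0$ slightly), apply the smooth case to each $\eta_\delta$, and pass to the limit: writing the equation satisfied by $R_{\eta_\delta}(t,s)U_0-R_\eta(t,s)U_0$ and treating $\delta^2H[S+\eta_\delta]-\delta^2H[S+\eta]$ as a forcing term, one more energy estimate of the section \ref{secenergie} type gives convergence in $\mathcal{H}^0$, locally uniformly in $(t,s)$, while the bound of the smooth case survives the limit since its constant depends only on $\varepsilon_0$. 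I do not expect a serious obstacle here, since the substantive work is entirely contained in lemma \ref{estim0}; the only mildly technical point is the linear well-posedness of the flow for low-regularity $\eta$, which nonetheless follows from the same energy machinery already in place in section \ref{secenergie}.
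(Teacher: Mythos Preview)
Your proposal is correct and matches the paper's approach: the paper presents this corollary explicitly as a restatement of lemma \ref{estim0} (``It is useful to restate the result of lemma \ref{estim0} in a slightly more abstract way'') and gives no proof whatsoever. Your points (i) and (ii) about linear well-posedness and removing the smoothness hypothesis on $\eta$ are additional rigor that the paper leaves implicit; in the applications of section \ref{secnewton} the perturbations $\eta$ are in any case constructed in high-regularity Sobolev spaces, so the density argument, while correct, is not strictly needed for the paper's purposes.
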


We deduce now similar estimates at any level of regularity.
\begin{theo}[Higher order estimates]\label{estimN}
Let $\varepsilon:=1/A+\|\eta\|_{X^{2n}}$, $n\in \N^*$. There exists 
$\varepsilon_n$, $C_{e,n}$, $C_n$ such that for any $\varepsilon\leq \varepsilon_n$,
we have the resolvent estimate
\begin{equation*}
\|R_\eta(t,s)\|_{\mathcal{L}(\mathcal{H}^{2n})}\leq (1+|t-s|)C_n
e^{C_{e,n}\varepsilon^{1/4}|t-s|} 
\end{equation*}
\end{theo}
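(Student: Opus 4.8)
The strategy is to bootstrap from the $\mathcal{H}^0$ estimate of Lemma \ref{estim0} (equivalently Corollary \ref{estimresolvent}) by differentiating the equation and treating the commutator terms perturbatively, exactly in the spirit of the energy estimate of Proposition \ref{mainenergy}. First I would set $U(t) = R_\eta(t,s)U_0$ and, for $0 \le k \le n$, apply the gauged differential operator $\varphi_k \partial_x^{2k}$ (the same gauge function $\varphi_k = a^{k/2}\sqrt{\rho_1}$ used in Section \ref{secenergie}, with $a = \sqrt{\rho_1 K}$ built from $\rho_1$, the density of $S+\eta$) to the equation $\partial_t U = J\partial_x \delta^2 H[S+\eta] U$. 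Writing $U_k := \varphi_k\partial_x^{2k}U$ and introducing, as in Section \ref{secenergie}, the associated Schrödinger-type variable so that the top-order skew-symmetric part $i\partial_x(a\partial_x\cdot)$ is preserved, the commutations produce an equation of the form $\partial_t U_k = J\partial_x\delta^2 H[V_k] U_k + (\text{lower-order in }U_k) + R_k$, where $R_k$ is a remainder involving at most $2k$ derivatives of $U$ and finitely many derivatives of $S$ and $\eta$. The gauge is chosen precisely so that the only genuinely dangerous top-order transport-type term vanishes identically, as in the computation $\varphi_k w + 2k\partial_x(a)\varphi_k - 2a\partial_x\varphi_k = 0$ of Proposition \ref{mainenergy}.

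Next I would run the three-step scheme of Lemma \ref{estim0} on each gauged level $U_k$ rather than redo it from scratch: define the modified energy $\widetilde{E}_k(t) = \langle \delta^2 H[S+\eta] U_k, U_k\rangle - \sum_j c_j \langle \delta^2 P\, \chi_j U_k, \chi_j U_k\rangle$, control $\widetilde{E}_k'(t)$ by $\varepsilon\|U_k\|_{\mathcal{H}^0}^2$ plus the contribution of the remainder $R_k$, decompose $\chi_j U_k = \alpha_{j,k}\delta P[U_j] + \beta_{j,k}\partial_x V_j + W_{j,k}$ using \eqref{pseudodiag}/\eqref{decompkink}, bound the parameters by ODE inequalities, and close with Gronwall. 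The only new feature compared with Lemma \ref{estim0} is the forcing term $R_k$: using the Sobolev calculus rules \eqref{calcsobo}, \eqref{calcsobo2}, the support-decorrelation estimates \eqref{bornechiprime}--\eqref{bornechiu}, and the equivalence of norms \eqref{equivnorm}, one gets $\|R_k\|_{L^2} \lesssim (\varepsilon + 1)\|U\|_{\mathcal{H}^{2k-1}} \lesssim \|U\|_{\mathcal{H}^{2k-1}}$, i.e. it is controlled by strictly lower-regularity norms of $U$, which by the inductive hypothesis already satisfy the desired resolvent bound. Hence $\|R_k\|_{L^2}(t) \lesssim (1+|t-s|)C_{n-1}e^{C_{e,n-1}\varepsilon^{1/4}|t-s|}\|U_0\|_{\mathcal{H}^{2n-2}}$, and feeding this into the differential inequality for $\widehat{E}_k$ yields, after Gronwall and absorbing the polynomial prefactor into a slightly larger exponent (or keeping it as $(1+|t-s|)$ times an exponential), the bound $\|U_k(t)\|_{\mathcal{H}^0} \lesssim (1+|t-s|)C_n e^{C_{e,n}\varepsilon^{1/4}|t-s|}\|U_0\|_{\mathcal{H}^{2n}}$. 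Summing the $k=0$ and $k=n$ contributions and invoking \eqref{equivnorm} to pass from the gauged quantities back to $\|U(t)\|_{\mathcal{H}^{2n}}$ gives the claim; since $U_0$ is arbitrary this is the operator-norm estimate on $R_\eta(t,s)$.

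The induction is on $n$, the base case $n=0$ being Corollary \ref{estimresolvent}; at each step one shrinks the threshold $\varepsilon_n \le \varepsilon_{n-1}$ so that $\|\eta\|_{X^{2n}} \le \varepsilon$ is small enough for the Sobolev calculus on $S+\eta$ (in particular $\rho_1$ bounded away from $0$, so $a$ and the $\varphi_k$ are well defined and smooth) and so that the various constants multiplying $\|U_k\|_{\mathcal{H}^0}^2$ can be absorbed. The main obstacle is bookkeeping: keeping precise track of which derivatives of $\eta$ are needed (one needs $\eta \in X^{2n}$, matching the statement, because the remainder $R_n$ contains $\delta^3 H$ acting on $\partial_t \eta$ and $\partial_x \eta$ differentiated up to order $2n-1$), and checking that every commutator term of top order either cancels by the gauge choice or is genuinely of lower order and thus covered by the inductive hypothesis. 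The transport term $v\partial_x U_k$ contributes only a harmless $\|v\|_{L^\infty}\|U_k\|_{L^2}^2$ to the energy identity as in \eqref{controlz}, and the terms $\langle [\chi_j,\cdot]\cdot\rangle$ and $\langle (\delta^2 H[S+\eta]-\delta^2 H[V_j])\cdot\rangle$ are controlled exactly as in \eqref{resteE1}--\eqref{resteE2}, uniformly in $k$. No genuinely new analytic difficulty arises beyond Lemma \ref{estim0} and Proposition \ref{mainenergy} combined.
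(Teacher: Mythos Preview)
Your proposal has a genuine gap in the size of the commutator remainder. The gauge cancellation of Section~\ref{secenergie} only kills the top $(2k{+}1)$-th derivative; the remainder $R_k$ still carries $2k$-th derivatives of the perturbation with coefficients such as $\partial_x v_1,\ \partial_x a$, which are $O(1)$ on the support of each traveling wave, \emph{not} $O(\varepsilon)$. For instance $[\varphi_k\partial_x^{2k},\, v_1\partial_x]$ has leading term $\bigl(2k\varphi_k\,\partial_x v_1 - v_1\,\partial_x\varphi_k\bigr)\partial_x^{2k}(\cdot)$. Hence $\|R_k\|_{L^2}\lesssim \|U\|_{\mathcal{H}^{2k}}$ with an $O(1)$ prefactor, contradicting your claim that it is controlled by $\|U\|_{\mathcal{H}^{2k-1}}$. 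Feeding such a forcing back into Duhamel or into $\widetilde{E}_k$ yields a closed differential inequality with $O(1)$ rate and only produces $e^{C|t-s|}$ growth, destroying precisely the $\varepsilon^{1/4}$ in the exponent that the theorem is about. The gauge of Section~\ref{secenergie} is built for \emph{generic} energy estimates with order-one exponential rate (which is all Proposition~\ref{mainenergy} requires); it does not encode the structure $(\partial_t+c_k\partial_x)V_k=0$ that is responsible for the small rate here. There is also a mismatch you glide over: the gauge acts on the scalar $z$-variable while the modified energy and the decomposition \eqref{pseudodiag} live on the vector $(r,u)$ side.

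The paper uses a different commutation. Instead of $\partial_x^{2k}$, it iterates the modified operator $\widetilde{L}:=L+\sum_k c_k\chi_k^2\partial_x$ (with $L=J\partial_x\delta^2H[S+\eta]$), chosen so that $[\partial_t-L,\widetilde{L}]$ is $O(\varepsilon)$ as an operator. Using $\sum_k\chi_k^2=1$ one obtains
\[
[\partial_t-L,\widetilde{L}]V=\sum_k\chi_k^2\,\delta L\bigl(V,(\partial_t+c_k\partial_x)(S+\eta)\bigr)+2c_k\chi_k(\partial_t\chi_k)\partial_xV-c_k[L,\chi_k^2]\partial_xV,
\]
and every piece is small: $(\partial_t+c_k\partial_x)V_k=0$, so on $\text{supp}(\chi_k)$ the first term only sees the decorrelated waves and $\eta$ (estimate \eqref{bornechiu}), while $\partial_t\chi_k$ and $[L,\chi_k^2]$ are $O(1/A)$ by \eqref{bornechiprime}. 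Consequently $\widetilde{L}^nU$ solves $\partial_tV=LV+f$ with $\|f\|_{\mathcal{H}^0}\lesssim\varepsilon\|U\|_{\mathcal{H}^{2n}}$; the $\mathcal{H}^0$ resolvent estimate via Duhamel, the elliptic lower bound $\|\widetilde{L}^nU\|_{\mathcal{H}^0}\gtrsim \|U\|_{\mathcal{H}^{2n}}-C_n\|U\|_{\mathcal{H}^0}$, and Gronwall then close the argument in one step from Corollary~\ref{estimresolvent}, with no induction on $n$ and no gauge. The localized transport correction $\sum_k c_k\chi_k^2\partial_x$ is exactly what makes the commutator small; pure spatial differentiation, gauged or not, does not carry this information.
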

\begin{proof}
The hamiltonian structure is useless here, so we 
denote for conciseness $L:=J\partial_x\delta^2H[S+\eta]$.
As for the energy estimate, an important issue is that $J\partial_x\delta^2H[S+\eta]$ does 
not commute with $\partial_t$, this will be overcome with the same method as 
for the zero order estimate.  The commutator  
$[\partial_t,L]=J\partial_x\delta^3H(\,\cdot\,,\partial_t(S+\eta)):=\delta L(\,\cdot\,,\partial_t(S+\eta))$ is not zero 
(for the definition of $\delta^3H$, see \eqref{derH} in the proof of lemma \ref{estim0}), 
thus to get higher order estimates it is more natural 
to use the operator $\di \widetilde{L}:=L+\sum_{k=1}^n c_k\chi_k^2\partial_x$.
Indeed using
$$[L,\widetilde{L}]=\sum_{k=1}^n[L,\ c_k\chi_k^2\partial_x]=
\sum_{k=1}^nc_k[L,\chi_k^2]\partial_x-c_k\chi_k^2\delta L(\cdot,\partial_x(S+\eta)),$$ we find for any $V$
\begin{eqnarray}\label{quasicommute}
 [\partial_t-L,\widetilde{L}]V =\sum_{k=1}^n\chi_k^2\delta L(V,(\partial_t+c_k\partial_x)(S+\eta))+
 2c_k\chi_k(\partial_t\chi_k)\partial_xV-[L,c_k\chi_k^2]\partial_xV,
\end{eqnarray}
therefore recalling $\partial_x V_j=\partial_xU_j$ for 
$j\geq 2$
\begin{eqnarray}
\partial_t\big(\widetilde{L}^nU\big)
&=&L\big(\widetilde{L}^nU\big)+\sum_{q=0}^{n-1}\widetilde{L}^q[\partial_t-L,\widetilde{L}]L^{n-q-1}U
\nonumber\\
&=&L(\widetilde{L}^nU)+\sum_{q=0}^{n-1}\widetilde{L}^q\bigg(\sum_{j\neq k}
\chi_k^2\delta L(\cdot,(c_k-c_j)\partial_x V_j)+\chi_k^2\delta L(\cdot,(\partial_t+c_k\partial_x)\eta)\bigg)
\widetilde{L}^{n-q-1}U\nonumber \\
\nonumber
&&+\sum_{q=0}^{n-1}\widetilde{L}^q\bigg(
\sum_{k=1}^n 2c_k\chi_k(\partial_t\chi_k)\partial_x-[L,c_k\chi_k^2]\partial_x\bigg)\widetilde{L}^{n-q-1}U
\\
\label{eqLU}
&=&L(\widetilde{L}^nU)+C_1+C_2.
\end{eqnarray}
Let us recall that if $S+\eta=\begin{pmatrix}\rho_1\\ v_1                              \end{pmatrix}$, for any $U=(r,u)^t$, we have 
\begin{equation*}
 LV=-\partial_x
 \begin{pmatrix}
u\rho_1+rv_1\\
 \bigg(g'-\frac{K''(\partial_x\rho_1)^2}{2}-K'\partial_x^2\rho_1\bigg)\,r -\partial_x(K\partial_x r) + 
uv_1
\end{pmatrix},
\end{equation*}
the first coefficient contains derivatives of $r,u,
\rho_1,v_1$ of order at most $1$, 
the second contains derivatives of $v_1,v$ of order at most $1$, and derivatives 
of $\rho_1,l$ of order at most $3$ so using the rules of section \ref{calcsobo},
\begin{eqnarray}\label{borneL}
\forall\,N\geq 0,\ 
\| LV\|_{\mathcal{H}^N}&\leq& C_N(\|\eta\|_{\mathcal{H}^{N+2}})
\|V\|_{\mathcal{H}^{N+2}},
\\
\Rightarrow \|\widetilde{L}V\|_{\mathcal{H}^N}&\leq& M_N(\|\eta\|_{\mathcal{H}^{N+2}})
\|V\|_{\mathcal{H}^{N+2}},
\end{eqnarray}
with $C_N,M_N$ positive locally bounded functions (we recall 
that $S$ is smooth and is unimportant in the estimates).
With this observation, estimate \eqref{bornechiprime} and computations similar 
to \eqref{detail2} we deduce 
\begin{equation*}
\|C_2(t)\|_{\mathcal{H}^0}
\lesssim  \frac{\|U(t)\|_{\mathcal{H}^{2n}}}{A}\leq \varepsilon\|U(t)\|_{\mathcal{H}^{2n}}.
\end{equation*}
Similarly, $C_1(t)$ is estimated thanks to \eqref{bornechiu} as for \eqref{estimC1},
\begin{eqnarray*}
\bigg\|C_1(t)\bigg\|_{\mathcal{H}^0}\lesssim 
\bigg(\frac{e^{-\alpha c_0t}}{A}+\|\eta\|_{X^{2n}}\bigg)
\|U(t)\|_{\mathcal{H}^{2n}}\leq \varepsilon\|U(t)\|_{\mathcal{H}^{2n}}.
\end{eqnarray*}
Conversely thanks to the interpolation estimate 
$\|\partial_x^p\varphi\|_{L^2}\leq \|\varphi\|_{L^2}^{1-p/q}\|\partial_x^q\varphi
\|_{L^2}^{p/q}$, and Young's inequality, 
$\|\partial_x^p\varphi\|_{L^2}\leq C(\varepsilon)\|\varphi\|_{L^2}
+\varepsilon\|\partial_x^q\varphi\|_{L^2}$ for any $\varepsilon>0$, $p<q$, therefore 
using once more the explicit formula for $\widetilde{L}V$, we obtain 
\begin{equation}\label{garding}
\|\widetilde{L}V\|_{\mathcal{H}^0}\gtrsim \|V\|_{\mathcal{H}^2}-C\|V\|_{\mathcal{H}^0},
\text{ and by induction }
\|\widetilde{L}^nU\|_{\mathcal{H}^0}\gtrsim \|U\|_{\mathcal{H}^{2n}}
-C_n\|U\|_{\mathcal{H}^0}.
\end{equation}
Equation \eqref{eqLU} is of the form $\partial_t V=LV+f$. We apply Duhamel's formula 
and corollary \ref{estimresolvent}
\begin{eqnarray}
\|\widetilde{L}^nU(t)\|_{\mathcal{H}^0}&\lesssim& (1+|t-s|)\|U_0\|_{\mathcal{H}^{2n}}
e^{C\varepsilon^{1/4} |t-s|}+\varepsilon
\int_s^t (1+|t-\tau|)e^{C\varepsilon^{1/4}|t-\tau|}\|U(\tau)\|_{\mathcal{H}^{2n}}ds\\
\label{duhamel1}
&\lesssim & (1+|t-s|)\|U_0\|_{\mathcal{H}^{2n}}e^{C\varepsilon^{1/4} |t-s|}+\varepsilon^{3/4}\int_s^t
e^{2C\varepsilon^{1/4}|t-\tau|}\|U(\tau)\|_{\mathcal{H}^{2n}}d\tau,
\end{eqnarray}
where the last estimate simply follows from $(1+s)e^s\lesssim e^{2s}$.\\
We use \eqref{duhamel1}, the bound $\|U(t)\|_{\mathcal{H}^0}\leq C(1+|t-s|)
e^{C\varepsilon^{1/4} |t-s|}\|U_0\|_{\mathcal{H}^0}$ of lemma \ref{estim0} and the lower bound \eqref{garding} with Gronwall's lemma
\begin{eqnarray*}
\|U(t)\|_{\mathcal{H}^{2n}}
&\lesssim& (1+|t-s|)e^{C\varepsilon^{1/4} |t-s|}\|U_0\|_{\mathcal{H}^{2n}}+
\varepsilon^{3/4}\int_s^te^{2C\varepsilon^{1/4}(t-\tau)}\|U(s)\|_{\mathcal{H}^{2n}}ds,\\
\Rightarrow \|U(t)\|_{\mathcal{H}^{2n}}&\lesssim&(1+|t-s|)\|U_0\|_{\mathcal{H}^{2n}}
e^{C(\varepsilon^{1/4}+\varepsilon^{3/4})|t-s|}.
\end{eqnarray*}
For $\varepsilon$ small $\varepsilon^{3/4}+\varepsilon^{1/4}=O(\varepsilon^{1/4})$ 
and the proof is complete.
\end{proof}

\section{Construction of an approximate solution}\label{secnewton}
We construct here an approximate solution $V^a$ close to the multi-soliton and 
such that the error 
$\partial_tV^a-J\partial_x\delta H[V^a]$ is rapidly decaying in $t$. It is done 
with Newton's algorithm, initialized with $S$ as the first approximate solution. 
\begin{theo}\label{solapp}
For any $n\in \N$, $\varepsilon>0, C_e>0$, there exists $A_0$ such that for 
$A\geq A_0$, there exists $U^a\in L^\infty_t\mathcal{H}^n$, 
$\alpha>0$ such that 
 \begin{equation*}
\forall\,t\geq 0,\ 
\|\partial_t(S+U^a)-J\partial_x\delta H[S+U^a]\|_{\mathcal{H}^n}\leq 
\varepsilon e^{-C_et}\text{ and }\|U^a(t)\|_{\mathcal{H}^n}\lesssim
\frac{e^{-\alpha c_0t}}{A}.
 \end{equation*}
\end{theo}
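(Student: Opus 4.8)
The plan is to construct $U^a$ by a \emph{finite} Newton iteration for the map $F(V):=\partial_tV-J\partial_x\delta H[V]$, initialized at the multi-soliton $S$. Set $V^0:=S$, $U^0:=0$, $E^0:=F(S)$. The first step is to control the initial error: since every traveling wave $V^{c_k}(x-c_kt)$ solves \eqref{EK}, one has $\partial_tV_k=J\partial_x\delta H[V_k]$ (with the obvious meaning for the kink $V_1$) and $\partial_tU_k=\partial_tV_k$ because the reference states are time independent, so $E^0=J\partial_x\big(\sum_k\delta H[V_k]-\delta H[S]\big)$ only involves the nonlinear self-interaction terms of $\delta H$ coupling distinct profiles. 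By multilinearity and the composition rules \eqref{calcsobo} each such term carries a product of derivatives of at least two distinct $U_k$; using the exponential decay \eqref{decay} and that consecutive solitons are at distance $\gtrsim A+2c_0t$ (cf. \eqref{bornechiu}), one gets $\|E^0(t)\|_{\mathcal{H}^m}\lesssim e^{-\alpha(A+c_0t)}$ for every $m$. In particular $E^0$ is smooth with arbitrarily many derivatives controlled.

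Next the Newton step. Given $V^j=S+U^j$ with error $E^j=F(V^j)$, define the correction $w^j$ as the solution of the linearized Cauchy problem $\partial_tw^j=J\partial_x\delta^2H[V^j]w^j-E^j$ that vanishes at $t=+\infty$, i.e.
\begin{equation*}
w^j(t)=\int_t^{+\infty}R_{U^j}(t,s)\,E^j(s)\,ds,
\end{equation*}
where $R_{U^j}$ is the resolvent of Theorem \ref{estimN} applied with perturbation $\eta=U^j$ — legitimate by the inductive smallness $\|U^j\|_{X^{2n}}\leq\varepsilon_n$, closed below. Since $\|R_{U^j}(t,s)\|_{\mathcal{L}(\mathcal{H}^m)}\leq(1+|t-s|)C_me^{C_{e,m}\varepsilon^{1/4}|t-s|}$ with $\varepsilon\simeq 1/A$, taking $A$ large makes $C_{e,m}\varepsilon^{1/4}$ strictly smaller than the decay rate of $E^j$; the integral then converges and $\|w^j(t)\|_{\mathcal{H}^m}\lesssim\delta_je^{-\mu_jt}$ with the \emph{same} rate $\mu_j$ as $E^j$ (a fixed number of derivatives are lost when bounding $\partial_tw^j$ through the equation, which is harmless given the smoothness of $E^0$). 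Set $V^{j+1}:=V^j+w^j$, $U^{j+1}:=U^j+w^j$.

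Now the error propagates quadratically. By construction $\partial_tw^j-J\partial_x\delta^2H[V^j]w^j=-E^j$, hence
\begin{equation*}
E^{j+1}=F(V^{j+1})=E^j+\big(\partial_tw^j-J\partial_x\delta^2H[V^j]w^j\big)-J\partial_xR^j=-J\partial_xR^j,
\end{equation*}
where $R^j:=\delta H[V^j+w^j]-\delta H[V^j]-\delta^2H[V^j]w^j$ is the quadratic Taylor remainder. By \eqref{calcsobo2} and the explicit form of $\delta H$, $\|R^j(t)\|_{\mathcal{H}^{m+1}}\lesssim\|w^j(t)\|_{\mathcal{H}^{m+2}}^2$, so $\|E^{j+1}(t)\|_{\mathcal{H}^m}\lesssim\delta_j^2e^{-2\mu_jt}$; that is $\delta_{j+1}\lesssim\delta_j^2$ and $\mu_{j+1}=2\mu_j$, starting from $\delta_0\simeq e^{-\alpha A}$ and $\mu_0=\alpha c_0$. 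Thus the scheme converges doubly-exponentially in $A$ and doubles the time-decay rate at each step. Moreover $\|U^{j+1}\|_{X^{2n}}\leq\|U^j\|_{X^{2n}}+C\delta_j$ and $\sum_j\delta_j\lesssim\delta_0\lesssim 1/A$, so the smallness $\|U^j\|_{X^{2n}}\leq\varepsilon_n$ used above is recovered for $A$ large, which closes the induction.

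Finally, given $n,\varepsilon,C_e$, fix $N$ with $2^N\alpha c_0>C_e$, run the iteration $N$ times (requiring $E^0$, hence $S$, smooth up to order $n+O(N)$, which holds), and set $U^a:=U^N=\sum_{j=0}^{N-1}w^j$. Then $\|E^N(t)\|_{\mathcal{H}^n}\lesssim\delta_Ne^{-2^N\alpha c_0t}\leq\delta_Ne^{-C_et}$, and $\delta_N\to 0$ as $A\to\infty$, so $\|E^N(t)\|_{\mathcal{H}^n}\leq\varepsilon e^{-C_et}$ for $A\geq A_0$; likewise $\|U^a(t)\|_{\mathcal{H}^n}\leq\sum_j\|w^j(t)\|_{\mathcal{H}^n}\lesssim\big(\sum_j\delta_j\big)e^{-\alpha c_0t}\lesssim e^{-\alpha c_0t}/A$, giving both claimed bounds. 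The main obstacle is the tension between the polynomial-times-exponential growth in time of the resolvent — unavoidable because $J\partial_x\delta^2E$ carries a Jordan block at the eigenvalue $0$ — and the need for a time-decaying correction: it is resolved by taking $A$ large so that $\varepsilon^{1/4}$ is small enough that the decay of $E^j$ dominates in the backward Duhamel integral, while the apparent circularity of needing $U^j$ small in order to apply the resolvent estimates at all is broken by the doubly-exponential (hence summable, with sum $O(1/A)$) smallness of the successive corrections.
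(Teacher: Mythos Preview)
Your proposal is correct and follows essentially the same approach as the paper: a finite Newton iteration initialized at $S$, with corrections defined by backward Duhamel integrals using the resolvent bounds of Theorem~\ref{estimN}, quadratic improvement of the error via the Taylor estimate (the paper's Lemma~\ref{taylor}), and a careful bookkeeping of the loss of derivatives at each step. The only cosmetic differences are that the paper states the initial error bound as $O(e^{-\alpha c_0 t}/A)$ rather than your sharper $O(e^{-\alpha(A+c_0 t)})$, and it tracks the regularity drop explicitly as $\mathcal{H}^{2n+2(k-j)}$ at step $j$; neither affects the argument.
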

We recall Newton's algorithm: $S^0=S,\ f^0=\partial_tS^0-J\partial_x\delta H[S^0]$, $\eta^0=0$, and 
 recursively
 \begin{eqnarray*}
 \eta^{j+1}\text{ is the solution of }
 \left\{
 \begin{array}{ll}
 \partial_t\eta^{j+1}=J\partial_x\delta^2H[S^j]\eta^{j+1}
 -f^{j},\\ 
 \di \lim_{t\to\infty} \eta^{j+1}=0,
 \end{array}
 \right.\\
 S^{j+1}=S^j+\eta^{j+1},\ f^{j+1}=\partial_t S^{j+1}-J\partial_x\delta H[S^{j+1}].
 \end{eqnarray*}
Of course since $\partial_t S^{j+1} -J\partial_x \delta H[S^{j+1}]=
J\partial_x\delta H[S^j]+J\partial_x\delta^2H[S^j]\eta^{j+1}-J\partial_x\delta H[S^{j+1}]$
we need some Taylor expansion estimate :
\begin{lemma}\label{taylor}
For $V:=S+U$, $(U,\eta)\in (\mathcal{H}^{n+2})^2$ such that 
$\|V\|_{L^\infty}+\|\eta\|_{L^\infty}\leq \inf \rho_S/2$ (non vacuum 
condition), then
\begin{equation*}
\|J\partial_x\delta H[V+\eta]-J\partial_x\delta H[V]-J\partial_x\delta^2H[V]\eta\|
_{\mathcal{H}^n}\leq C(\|\eta\|_{\mathcal{H}^{n+2}}+
\|U\|_{\mathcal{H}^{n+2}})\|\eta\|_{\mathcal{H}^{n+2}}^2,
\end{equation*}
with $C$ continuous.
\end{lemma}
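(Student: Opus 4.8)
The plan is to reduce the statement to scalar estimates in ordinary Sobolev spaces and then expand the nonlinearity term by term. Write $V=S+U$ with $U=(r_U,u_U)^t$, $\eta=(r_\eta,u_\eta)^t$, and set $Q:=\delta H[V+\eta]-\delta H[V]-\delta^2H[V]\eta=(Q_1,Q_2)^t$. Since $J$ merely swaps components, $J\partial_x Q=(-\partial_x Q_2,-\partial_x Q_1)^t$, so by the definition of $\mathcal{H}^n$,
\[
\|J\partial_x Q\|_{\mathcal{H}^n}^2=\|\partial_x Q_2\|_{H^{n+1}}^2+\|\partial_x Q_1\|_{H^n}^2\le\|Q_2\|_{H^{n+2}}^2+\|Q_1\|_{H^{n+1}}^2,
\]
and it is enough to bound $\|Q_2\|_{H^{n+2}}$ and $\|Q_1\|_{H^{n+1}}$ by $C\big(\|\eta\|_{\mathcal{H}^{n+2}}+\|U\|_{\mathcal{H}^{n+2}}\big)\|\eta\|_{\mathcal{H}^{n+2}}^2$. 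Throughout, the non-vacuum hypothesis ensures that the densities $\rho:=\rho_S+r_U$ and $\rho+r_\eta$ stay in a fixed compact subset of $\R^{+*}$ on which $K,K',K'',g$ are smooth; this makes the composition estimates \eqref{calcsobo}--\eqref{calcsobo2} and the Banach algebra property of $H^m$ ($m\ge1$, hence all $H^{n+1}$ since $n\ge0$) applicable, with constants depending continuously on $\|U\|_{\mathcal{H}^{n+2}}+\|\eta\|_{\mathcal{H}^{n+2}}$ and on the fixed smooth profile $S$ (for which $\rho_S\in\mathcal{C}^m_b$ and $\partial_x\rho_S,\partial_x^2\rho_S\in H^m$ for all $m$, by exponential decay of the derivatives of the traveling waves).

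The second component is immediate: $(\delta H[V])_2=\rho v$ is bilinear, so its exact second-order remainder is $Q_2=r_\eta u_\eta$, and since $H^{n+2}$ is an algebra, $\|Q_2\|_{H^{n+2}}\lesssim\|r_\eta\|_{H^{n+2}}\|u_\eta\|_{H^{n+2}}\le\|\eta\|_{\mathcal{H}^{n+2}}^2$. For the first component, $(\delta H[V])_1=\Psi(\rho)+v^2/2$ with $\Psi(\rho):=-K(\rho)\partial_x^2\rho-\tfrac12K'(\rho)(\partial_x\rho)^2+g(\rho)$; the $v$-part contributes the harmless term $u_\eta^2/2$, bounded in $H^{n+1}$ by $\|u_\eta\|_{H^{n+1}}^2\le\|\eta\|_{\mathcal{H}^{n+2}}^2$. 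A direct computation (using $-\partial_x(K(\rho)\partial_x r_\eta)=-K'(\rho)\partial_x\rho\,\partial_x r_\eta-K(\rho)\partial_x^2 r_\eta$) shows that the $r_\eta$-part of $(\delta^2H[V]\eta)_1$ is exactly the linearization $D\Psi(\rho)\cdot r_\eta$, so $Q_1=\mathcal{R}+u_\eta^2/2$ with $\mathcal{R}:=\Psi(\rho+r_\eta)-\Psi(\rho)-D\Psi(\rho)\cdot r_\eta$. I would then expand $\mathcal{R}$ over the three pieces of $\Psi$: from $-K(\rho)\partial_x^2\rho$ one gets, up to sign, $\big[K(\rho+r_\eta)-K(\rho)-K'(\rho)r_\eta\big]\partial_x^2\rho+\big[K(\rho+r_\eta)-K(\rho)\big]\partial_x^2 r_\eta$; from $-\tfrac12K'(\rho)(\partial_x\rho)^2$, after expanding $(\partial_x\rho+\partial_x r_\eta)^2$, one gets (up to constants) $\big[K'(\rho+r_\eta)-K'(\rho)-K''(\rho)r_\eta\big](\partial_x\rho)^2+\big[K'(\rho+r_\eta)-K'(\rho)\big]\partial_x\rho\,\partial_x r_\eta+K'(\rho+r_\eta)(\partial_x r_\eta)^2$; and from $g(\rho)$ the plain second-order remainder $g(\rho+r_\eta)-g(\rho)-g'(\rho)r_\eta$.

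Each of these terms is then bounded in $H^{n+1}$ by combining the second-order composition rule \eqref{calcsobo2} on the brackets of the form $F(\rho+r_\eta)-F(\rho)-F'(\rho)r_\eta$ (which are $\lesssim C(\cdots)\|r_\eta\|_{H^{n+1}}^2$), the first-order rule \eqref{calcsobo} on the brackets $F(\rho+r_\eta)-F(\rho)$ (which are $\lesssim C(\cdots)\|r_\eta\|_{H^{n+1}}$), and the algebra property of $H^{n+1}$ to multiply by the $S$- and $\rho$-dependent factors $\partial_x^2\rho,\partial_x\rho,(\partial_x\rho)^2$ — which lie in $H^{n+1}$ with norm controlled by $C(\|U\|_{\mathcal{H}^{n+2}})$ since $r_U\in H^{n+3}$ — and by the remaining factors $\partial_x^2 r_\eta,\partial_x r_\eta,(\partial_x r_\eta)^2$, which lie in $H^{n+1}$ with norm $\le\|\eta\|_{\mathcal{H}^{n+2}}$ (resp. its square). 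Since every summand carries a factor $\|r_\eta\|_{H^{n+1}}^2$, $\|r_\eta\|_{H^{n+1}}\|\partial_x r_\eta\|_{H^{n+1}}$, or $\|\partial_x r_\eta\|_{H^{n+1}}^2$, all $\le\|\eta\|_{\mathcal{H}^{n+2}}^2$, we obtain $\|\mathcal{R}\|_{H^{n+1}}\le C(\|\eta\|_{\mathcal{H}^{n+2}}+\|U\|_{\mathcal{H}^{n+2}})\|\eta\|_{\mathcal{H}^{n+2}}^2$, and together with the bound on $Q_2$ this closes the estimate; continuity of $C$ is inherited from the composition rules. The one point that needs genuine care is the term $\big[K(\rho+r_\eta)-K(\rho)\big]\partial_x^2 r_\eta$ coming from the quasilinear term $K(\rho)\partial_x^2\rho$: it pairs a mere first-order difference with the top-order derivative $\partial_x^2 r_\eta$, so one must exploit that $\partial_x^2 r_\eta\in H^{n+1}$ (whence the hypothesis $\eta\in\mathcal{H}^{n+2}$, i.e. $r_\eta\in H^{n+3}$), that $H^{n+1}$ is an algebra, and that the difference is $O(\|r_\eta\|_{H^{n+1}})$ by \eqref{calcsobo} — an application which is legitimate precisely because the non-vacuum condition keeps $\rho$ and $\rho+r_\eta$ in a region where $K$ is smooth.
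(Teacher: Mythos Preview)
Your proof is correct and follows essentially the same route as the paper: an explicit expansion of the Taylor remainder $Q=\delta H[V+\eta]-\delta H[V]-\delta^2H[V]\eta$ followed by term-by-term Sobolev estimates using the composition rules \eqref{calcsobo}--\eqref{calcsobo2} and the algebra property. Your decomposition is in fact slightly more complete than the paper's displayed formula (you record the term $-\tfrac12\big(K'(\rho+r_\eta)-K'(\rho)-K''(\rho)r_\eta\big)(\partial_x\rho)^2$, which the paper omits), and you are more explicit about why $\partial_x\rho,\partial_x^2\rho\in H^{n+1}$ via the smoothness of $S$ and $r_U\in H^{n+3}$.
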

\begin{proof}
We set $V=\begin{pmatrix}\rho\\ v\end{pmatrix},\ 
U=\begin{pmatrix}r\\ u\end{pmatrix},\ \eta=\begin{pmatrix}\gamma\\ \omega\end{pmatrix}$. \\
Elementary  computations lead to 
\begin{eqnarray*}
J\partial_x\delta H[V+\eta]&-&J\partial_x\delta H[V]-J\partial_x\delta^2H[V]\eta\\
&=&
-\partial_x
\begin{pmatrix}
\gamma \omega \\
\di \frac{\omega^2}{2}+\big(g(\rho+\gamma)-g(\rho)-\gamma g'(\rho)\big)
-\big( K(\rho+\gamma)-K(\rho)\big)\partial_x^2\gamma\\
+\big(K(\rho)+\gamma K'(\rho)-K(\rho+\gamma)\big)\partial_x^2\rho
-\big( K'(\rho+\gamma)-K'(\rho)\big)\partial_x \rho\partial_x\gamma\\
-\frac{1}{2}K'(\rho+\gamma)(\partial_x \gamma)^2
\end{pmatrix}.
\end{eqnarray*}
We have using the rules in \ref{calcsobo} 
\begin{equation*}
 \|\partial_x(\gamma \omega)\|_{H^{n+1}}\lesssim \|\gamma\|_{H^{n+2}}\|\omega\|_{H^{n+2}}
 \leq \|\eta\|_{\mathcal{H}^{n+2}}^2.
\end{equation*}
For the second coordinate, we only treat a few terms. 
Thanks to the smallness assumption in $L^\infty$, 
$\inf\rho,\inf\rho+\gamma\geq \inf \rho_S/2$ so that we can 
use the composition rules again
\begin{eqnarray*}
\|\partial_x\big((K(\rho+\gamma)-K(\rho))\partial_x^2\gamma\big)\|_{H^n}&\lesssim& \|K(\rho+\gamma)-K(\rho)\|_{H^{n+1}}
\|\gamma\|_{H^{n+3}}\lesssim \|\gamma\|_{H^{n+3}}^2,\\
\|\partial_x\big(K(\rho)+\gamma K'(\rho)-K(\rho+\gamma))\partial_x^2\rho\big)\|_{H^n}&\lesssim& \|K(\rho)+\gamma K'(\rho)-K(\rho+\gamma)\|_{H^{n+1}}
\|r\|_{H^{n+3}}\\
&\lesssim& \|\gamma\|_{H^{n+2}}^2.
\end{eqnarray*}
After similar estimates for the other terms, we end up with 
\begin{equation*}
\|J\partial_x\delta H[U+\eta]-J\partial_x\delta H[U]-J\partial_x\delta^2H[U]\eta\|
_{\mathcal{H}^n}\lesssim \|\omega\|_{H^{n+2}}^2+\|\gamma\|_{H^{n+3}}^2=\|\eta\|_{\mathcal{H}^{n+2}}^2.
\end{equation*}

\end{proof}
\begin{proof}[Proof of theorem \ref{solapp}]
We fix some $k\in \N$ large, and iterate Newton's algorithm $k$ times.\\
The proof is divided in the following steps : 1. Control of $f^0$, 2. Control of 
$\eta^1$, 3. Iteration argument.\\
\textit{Step 1: Control of $f^0$.}
We use the partition of unity \eqref{partition}:
\begin{eqnarray*}
 \partial_tS-J\partial_x\delta H[S]=\sum_{j,k,\ j\neq k} \chi_j^2J\partial_x\delta H[V_k]
 +\sum_j\chi_j^2J\partial_x\big(\delta H[V_j]-\delta H\big[S\big]\big).
 \end{eqnarray*}
We set $V_j=\begin{pmatrix}\rho_j\\v_j\end{pmatrix}$, 
$S=\begin{pmatrix}\rho_S\\v_S\end{pmatrix}$, and explicit 
the second term
\begin{eqnarray*}
 \delta H[V_j]-\delta H[S]=
\begin{pmatrix}
\di -(\rho_S-\rho_j)v_j-\rho_S(v_S-v_j)\\
\di (u_S-u_j)(u_j+u_S)
+\big(K(\rho_j)-K(S)\big)\partial_x^2\rho_j\\
+K\big(S\big)\partial_x^2\big(\rho_j-S\big)\\
+\frac{1}{2}K'(\rho_j)(\partial_x \rho_j)^2-\frac{1}{2}K'(S)(\partial_x S)^2
\end{pmatrix}.
\end{eqnarray*}
Since all terms are smooth and spatially decorrelated, they
are small and exponentially decaying in $t$ for any 
Sobolev norms, for example
\begin{eqnarray*}
\bigg\|\chi_j\partial_x \big((K(\rho_j)-K(\rho_S))\partial_x^2\rho_j\big)\bigg\|_{H^{2n+2k}}
&\lesssim& \|\partial_x\rho_j\|_{H^{2n+2(k+1)}}\|K(\rho_j)-K(S)\|_{H^{2n+2k+1}(\text{supp}(\chi_j))}\\
&\lesssim& \frac{e^{-\alpha c_0t}}{A}.
\end{eqnarray*}
With similar computations, we find the existence of $C_0$ only depending on $S,n,k$ such that 
\begin{equation*}
\|f^0\|_{\mathcal{H}^{2n+2k}}\leq C_0\frac{e^{-\alpha c_0 t}}{A}.
\end{equation*}
\textit{Step 2: Control of $\eta^1$ and $\eta^2$.}
At this point some care with the constants is required. We denote $R_{\eta}$ the resolvent operator associated to 
$J\partial_x\delta^2H[S+\eta]$. Following the notation and result of theorem \ref{estimN}, we set
$C_e=\max_{n\leq p\leq n+k} C_{e,p}$. There exists a constant 
$C_R$ such that for $1\leq l\leq k$, under the conditions
\begin{equation}\label{petit1}
C_e(1/A+\|\eta\|_{X^{2n+2l}})^{1/4}\leq \alpha c_0/4,\ 
1/A+\|\eta\|_{X^{2n+2l}}\leq \min_{1\leq j\leq k} \varepsilon_{n+j},
\end{equation}
and using $te^t\leq e^{2t}$ then
\begin{equation}\label{resunif}
\|R_\eta(t,s)\|_{\mathcal{L}(\mathcal{H}^{2n+2l})}\leq C(1+|t-s|)e^{\alpha c_0|t-s|/4}\leq 
C_Re^{\alpha c_0|t-s|/2}.
\end{equation}
According to lemma \ref{taylor}, there exists $C_{Tayl}$ such that for 
\begin{eqnarray}
\label{condinfini}
j\leq n+k,\ \eta\in \mathcal{H}^{2j+2},U\in \mathcal{H}^{2j+2},
\ V=S+U, \
\|\eta\|_{L^\infty}+\|U\|_{L^\infty}\leq \inf S/2,\\
\label{taylunif}
\Rightarrow \|J\partial_x\delta H[V+\eta]-J\partial_x\delta H[V]-J\partial_x\delta^2H[V]\eta\|
_{\mathcal{H}^{2j}}\leq C_{Tayl}\|\eta\|_{\mathcal{H}^{2j+2}}^2.
\end{eqnarray}
With these notations we bound $\eta^1$ using the Duhamel formula, 
\begin{eqnarray*}
\eta^1&=&\int_t^\infty R_0(t,s)f^0(s)ds,\\
\Rightarrow 
\|\eta^1\|_{\mathcal{H}^{2n+2k}}&\leq& 
\int_t^\infty C_0C_Re^{\frac{\alpha c_0}{2}|t-s|}\frac{e^{-\alpha c_0 s}}{A}ds
= \frac{2C_0C_R}{A\alpha c_0}e^{-\alpha  c_0t}.
\end{eqnarray*}
We define $\delta =2C_0C_R/(A\alpha c_0)$, which can be as small as needed. \\
To bound $f^1=J\partial_x\delta H[S]+J\partial_x\delta H[S]\eta^1
-J\partial_x\delta H[S+\eta^1]$ we can use estimate 
\eqref{taylunif} (note that up to decreasing $\min_{1\leq j\leq k}
\varepsilon_{n+j}$, condition \eqref{petit1} is stronger than 
\eqref{condinfini})
\begin{equation*}
\|f^1\|_{\mathcal{H}^{2n+2(k-1)}}\leq C_{Tayl}\delta^2e^{-2\alpha c_0t}.
\end{equation*}
Next to use the resolvent estimate \eqref{resunif}, we need to bound $\|\eta^1\|_{X^{2n+2(k-1)}}$.
This is done thanks to a general estimate : if 
$\partial_t\eta=J\partial_x\delta H[S+U]\eta+f$, then using \eqref{borneL}
\begin{equation}\label{estdt}
\forall\,N\geq 2,
\|\partial_t\eta\|_{\mathcal{H}^{N-2}}\leq C_{X,N}(\|U\|_{\mathcal{H}^{N}})
\|\eta\|_{\mathcal{H}^{N}}+\|f\|_{\mathcal{H}^{N-2}}.
\end{equation}
We define $\di C_X=\max_{1\leq j\leq k}
C_{X,2n+2j}(\varepsilon_{n+j})$.
Since $\partial_t\eta^1=J\partial_x\delta^2H[S]\eta^1-f^0$, \eqref{estdt} gives
\begin{equation*}
\|\partial_t\eta^1\|_{\mathcal{H}^{2n+2(k-1)}}\leq C_X\|\eta^1\|_{\mathcal{H}^{2n+2k}}
+\frac{C_0e^{-\alpha c_0t}}{A}\leq C_X\delta+\frac{C_0}{A}.
\end{equation*}
In particular, $\|\eta^1\|_{X^{2n+2(k-1)}}\leq 
(C_X+1)\delta+C_0/A$. Therefore (up to increasing 
$A$) condition \eqref{petit1} is satisfied (with $l=k$), Duhamel's formula gives again
\begin{equation*}
\|\eta^2\|_{\mathcal{H}^{2n+2(k-1)}}\leq \int_t^\infty C_RC_{Tayl}\delta^2
e^{\alpha c_0|t-s|/2}e^{-2\alpha c_0s}ds
\leq \frac{C_RC_{Tayl}\delta}{\alpha c_0}\delta e^{-2\alpha c_0t}
:=q\delta e^{-2\alpha c_0t}.
\end{equation*}
We note for later use that for $A$ large enough, we have 
$q\leq 1/2$ so that $\sum_{j\geq 0}q^j\leq 1$.
\vspace{2mm}\\
\textit{Step 3. Induction.}
Assume we have constructed $(\eta^i)_{1\leq i\leq j}$ for some $j<k$, with 
\begin{equation*}
\|\eta^1\|_{\mathcal{H}^{2n+2k}}\leq \delta e^{-\alpha c_0t},\ 
\forall\,i\geq 2,\  \|\eta^i\|_{\mathcal{H}^{2n+2(k-i+1)}}\leq q^{2^{i-1}}\delta
e^{-2^{i-1}\alpha c_0t}.
\end{equation*}
In particular, $\|\sum_1^j\eta^i\|_{\mathcal{H}^{2n+2(k-i+1)}}\leq 2\delta$ and
from \eqref{taylunif} 
\begin{equation*}
\forall\, 1\leq i\leq j,\ \|f^i(t)\|_{\mathcal{H}^{2n+2(k-i)}}\leq C_{Tayl}q^{2^i}\delta^2
e^{-2^i\alpha c_0 t}.
\end{equation*}
Estimating $\partial_t\eta^i$ as for $\partial_t\eta^1$ we have 
\begin{equation*}
\bigg\|\partial_t(\sum_{i=1}^j\eta^i)\bigg\|_{\mathcal{H}^{2n+2(k-j)}}\leq \sum_{i=1}^jC_X
\|\eta^i\|_{\mathcal{H}^{2n+2(k-i+1)}}
+\|f^{i-1}\|_{\mathcal{H}^{2n+2(k-i)}}\leq 2C_X\delta +2C_{Tayl}\delta^2+\frac{C_0}{A},
\end{equation*}
therefore $\|\sum_1^j \eta^i\|_{X^{2n+2(k-j)}}\leq 2(C_X+1)\delta +2C_{Tayl}\delta^2+C_0/A$,
and the smallness conditions \eqref{petit1} are satisfied (with $l=k-j$) for $A$ large enough 
independent of $j$.\\
We can use the uniform resolvent estimate \eqref{resunif} and Taylor estimate \eqref{taylunif} 
as for the construction of $\eta^2$
\begin{eqnarray*}
\|\eta^{j+1}\|_{\mathcal{H}^{2n+2(k-j)}}&\leq& \int_t^\infty C_RC_{Tayl}q^{2^j}\delta^2
e^{\alpha c_0|t-s|/2}e^{-2^j\alpha c_0s}ds
\leq q^{2^j}\delta e^{-2^j\alpha c_0t}.
\end{eqnarray*}
By induction, we obtain $(\eta^j)_{1\leq j\leq k}$, the function $U^a\sum_1^k\eta^j$ is sufficient to end the proof since by construction and estimate \eqref{taylunif}
\begin{equation*}
\partial_t(S+\sum_1^k\eta^j)=J\partial_x\delta H[S+\sum_1^k\eta^j]+f^k,
\text{ with }\|f^k\|_{\mathcal{H}^{2n}}\leq C_{Tayl}q^{2^k}\delta e^{-2^kt},
\end{equation*}
so that the remainder $f^k$ is as small and rapidly decreasing as required for $k$ large enough.
\end{proof}
\section{Proof of the main result}\label{secpreuve}
This section is a compactness argument.
Let $V^a=\begin{pmatrix}\rho^a\\v^a\end{pmatrix}=S+U^a$ be an
approximate solution given by theorem \ref{solapp} with 
$U^a\in \mathcal{H}^{2n+2}$ and 
$n,\varepsilon,\ C_e$ to choose later.
Define $V^k$ the solution of \eqref{EK} 
with Cauchy data $V^k(k)=V^a(k)$. According to the energy estimate of proposition 
\ref{mainenergy}, we have for $\Delta U^k:=V^k-V^a$
\begin{equation*}
\bigg|\frac{d}{dt}\widetilde{\|}\Delta U^k\|_{\mathcal{H}^{2n}}\bigg|\leq C\big(\|\Delta U^k\|_{\mathcal{H}^{2n}}
+\|U^a\|_{\mathcal{H}^{2n+2}}+\|1/\rho^k+1/\rho^a\|_{L^\infty}\big)\big(
\widetilde{\|}\Delta U^k\|_{\mathcal{H}^{2n}}+\varepsilon e^{-C_et}\big).
\end{equation*}
Let $\di m=\inf_{(x,t)\in \R\times \R^+} \rho^a$, we pick $\varepsilon$ such that
$\widetilde{\|}\Delta U^k\|_{\mathcal{H}^{2n}}<2\varepsilon\Rightarrow \inf \rho^k\geq m/2$, 
and $\|\Delta U^k\|_{\mathcal{H}^{2n}}\leq C_1\widetilde{\|}\Delta U^k\|_{\mathcal{H}^{2n}}$.
Set 
$C=C(2C_1\varepsilon+\|U^a\|_{\mathcal{H}^{2n+2}}+3/m)$, and fix $C_e\geq 2C$. We can assume 
$C\geq 1$. Since $\Delta U^k(k)=0$, the energy estimate backwards in time gives 
(as long as $\|U(t)\|_{\mathcal{H}^{2n}}\leq 2\varepsilon$)
\begin{eqnarray*}
\frac{d}{dt}\big(e^{Ct}\widetilde{\|}\Delta U^k\|_{\mathcal{H}^{2n}}\big)\geq -\varepsilon e^{-(C_e-C)t}
\Rightarrow\widetilde{\|}\Delta U^k(t)\|_{\mathcal{H}^{2n}}^2\leq \frac{\varepsilon}{C_e-C}e^{-C_et}\leq \varepsilon e^{-C_et}.
\end{eqnarray*}
From a (backwards) continuation argument, the solution is well defined on $[0,k]$ for $\varepsilon$ small enough, 
and independently of $k$
$$\forall\,0\leq t\leq k,\ \|\Delta U^k(t)\|_{\mathcal{H}^{2n}}\leq C_1\varepsilon e^{-C_et}.$$ 
For $n$ large (actually $n=2$ is enough), we have from the equation 
$\partial_tV^k\in \mathcal{C}_b(\R^+,\mathcal{H}^{2n-2})$, 
indeed we recall that $\partial_x S$ is smooth and rapidly decaying:
\begin{eqnarray*}
\partial_t\rho^k=-\partial_x(\rho^ku^k), \text{ with }\rho^k\in (\rho_S+H^{2n+1}),\ u^k\in (u_S+H^{2n}),\text{ thus }
\partial_t\rho^k \in H^{2n-1} ,\\
\partial_tu^k=-\partial_x\bigg(g(\rho^k)+\underbrace{\frac{(u^k)^2}{2}}_{(u_S)^2+H^{2n}}-\underbrace{K\partial_x^2\rho^k}_{H^{2n-1}}
-\underbrace{\frac{1}{2}K'(\partial_x\rho^k)^2}_{H^{2n}}\bigg)\in H^{2n-2}.
\end{eqnarray*}
Similarly, $\partial_tU^a\in C_b(\R^+,\mathcal{H}^{2n})$. We deduce that $V^k-S=U^a+\Delta U^k$ is bounded in $C_b([0,k],\mathcal{H}^{2n})$ and $C^1_b([0,k],\mathcal{H}^{2n-2}$.
By weak* compactness, up to an extraction $V^k-S$ converges
weakly to some $U\in L^\infty(\R^+,\mathcal{H}^{2n})$. Moreover
for any bounded interval $J$, we have the compact embedding $\mathcal{H}^{2n-2}(J)\subset \mathcal{H}^{2n-3}(J)$,
so using the Ascoli-Arzela theorem, up to another extraction $V^k-S$ converges to $U$ in 
$C_{\text{loc}}(\R^+,\mathcal{H}_{\text{loc}}^{2n-2})$. For $2n-3\geq 2$ it is not hard to check that $S+U$ is a
solution of the Euler-Korteweg system \eqref{EK}.\\ 
Now due to the uniform estimate $\|V^k(t)-V^a(t)\|_{\mathcal{H}^{2n}}\leq \varepsilon e^{-C_et}\ (t\leq k)$, passing to the (weak) limit 
$$\|(S+U)(t)-V^a(t)\|_{\mathcal{H}^{2n}}\leq \varepsilon e^{-C_et}\ \text{(a.e.)}.$$
From theorem \ref{solapp}, we also know  $\|V^a-S\|_{\mathcal{H}^{2n}}\lesssim \frac{e^{-\alpha c_0t}}{A}$, therefore 
we can conclude 
$$\lim_{t\to \infty}\|U(t)-S(t)\|_{\mathcal{H}^{2n}}=0.$$
\begin{rmq}
A priori, the pointwise $\mathcal{H}^{2n}$ convergence holds 
only almost everywhere in 
$t$, however using the well-posedness theorem $1.1$ 
in \cite{BDD2}, one can prove that $U$ coincides with the 
$C(\R^+,\mathcal{H}^{2n})$ solution,
and by continuity the convergence holds for all $t$.
\end{rmq}
\appendix 
\section{Complements on traveling waves}\label{appendix}
\paragraph{Existence of kinks}
A traveling wave satisfy 
\begin{equation*}
 \left\{
 \begin{array}{lll}
  -c\partial_x \rho+\partial_x(\rho v)&=&0,\\
-c\partial_x v+\partial_x(v^2/2)+\partial_xg(\rho)&=&
\partial_x\bigg(K\partial_x^2\rho+\frac{1}{2}K'(\partial_x\rho)^2\bigg).
\end{array}
\right.
\end{equation*}
A first integration gives 
\begin{equation*}
 \left\{
 \begin{array}{lll}
  \rho(v-c)&=&j,\\
 \di  \frac{(v-c)^2}{2}+g(\rho)-K\partial_x^2\rho-\frac{1}{2}K'(\partial_x\rho)^2&=&q.
\end{array}
\right.
\end{equation*}
Assuming $\lim_{\pm \infty}\rho=\rho_\pm,\ \lim_{\pm \infty}
v= v_\pm$, we have
\begin{eqnarray}\label{masse}
j&=&\rho(v-c)=\rho_+(v_+-c)=\rho_-(v_--c),\\
\label{moment}
q&=&\frac{(v-c)^2}{2}+g(\rho)-K\partial_x^2\rho-\frac{1}{2}K'(\partial_x\rho)^2\\
&=&\frac{(v_+-c)^2}{2}+g(\rho_+)\\
&=&\frac{(v_--c)^2}{2}+g(\rho_-).
\end{eqnarray}
This implies
\begin{eqnarray}\label{edorho}
q=\frac{j^2}{2\rho^2}+g(\rho)-K\partial_x^2\rho
-\frac{1}{2}K'(\partial_x\rho)^2
\label{infoq}&=& \frac{j^2}{2\rho_+^2}+g(\rho_+)\\
&=&\frac{j^2}{2\rho_-^2}+g(\rho_-).
\end{eqnarray}
Set 
$f(\rho)= \frac{j^2}{2\rho^2}-q+g(\rho)$,  
we get two conditions 
\begin{equation}\label{condf}
f(\rho_+)=f(\rho_-)=0. 
\end{equation}
Multiplying \eqref{edorho} by $\partial_x\rho$ and integrating from $\rho_-$ to $\rho$
\begin{equation}\label{integrho}
\frac{1}{2}K(\partial_x\rho)^2=-q(\rho-\rho_-)
-\frac{j^2}{2}\bigg(\frac{1}{\rho}-\frac{1}{\rho_-}\bigg)
+G(\rho):=F(\rho),
\end{equation}
with $G$ the primitive of $g$ such that $G(\rho_-)=0$. 
From this integrated momentum equation 
we get one condition :
\begin{equation}\label{condF}
F(\rho_+)=0.
\end{equation}
This condition can be written only in term of $\rho_-,\rho_+$:
\begin{equation}\label{conditionintegrale}
\frac{G(\rho_+)-G(\rho_-)}{\rho_--\rho_-}=\frac{g(\rho_+)\rho_++g(\rho_-)\rho_-}{\rho_++\rho_-}.
\end{equation}
Lastly according to \eqref{moment} $\rho$ satisfies the following system of ODE 
\begin{equation*}
\left\{
\begin{array}{lll}
\sqrt{K}\partial_x\rho&=&w\\
\sqrt{K}\partial_xw&=&j^2/2\rho^2+g(\rho)-q,
\end{array}
\right.
\end{equation*}
up to a change of variable it is hamiltonian (with energy 
$F(\rho)$) therefore steady states can only be centers or saddles,
and a traveling wave connects two saddle points. 
So $(\rho_\pm,0)$ should be a
saddle point, which leads to a last condition: 
the characteristic equation at $(\rho_\pm,0)$ is 
\begin{equation*}
\lambda ^2+j^2/\rho_\pm^3-g'(\rho_\pm)=0,
\end{equation*}
and the roots in $\lambda$ are real with opposite sign under the 
condition
\begin{equation}
\label{condj}
j^2<\rho_\pm^3g'(\rho_\pm)\Leftrightarrow (v_\pm-c)^2<\rho_\pm g'(\rho_\pm)\Leftrightarrow f'(\rho_\pm)>0.
\end{equation}
(we will see several interpretations of this condition).
Conversely, assume \eqref{condf},\eqref{condF},\eqref{condj} are satisfied, and 
that $f$ only changes sign once on $(\rho_-,\rho_+)$. Due to \eqref{condf},
\eqref{condj}, $f'(\rho_{\pm}>0$ thus $f$ must be 
positive then negative on $(\rho_-,\rho_+)$, and from \eqref{condF}, 
$F$ remains positive on $(\rho_-,\rho_+)$, but vanishes at second order at $\rho_\pm$. 
The existence of a kink then just follows from the integration of 
$\pm\sqrt{K}\partial_x\rho/\sqrt{2F(\rho)}=1$ (with a choice of sign adapted to the one of $\rho_--\rho_+$).
To summarize, provided this equation is satisfied $c$ is a free parameter, and 
either $\rho_+$ or $\rho_-$ is used to fully parametrize the traveling waves. Kinks should 
thus form locally two dimensional manifolds.
\begin{rmq}
As the construction of the profile $\rho$ depends on $(v_+-c)^2$, we can assume $c-v_+>0$.
\end{rmq}

\paragraph{The speed of kinks, some geometry}
The momentum equation is 
\begin{equation*}
\frac{j^2}{2\rho^2}-\frac{j^2}{2\rho_+^2}+g(\rho)=K\partial_x^2\rho
+\frac{1}{2}K'(\partial_x\rho)^2,
\end{equation*}
\begin{equation}
\Rightarrow -\frac{j^2(\rho-\rho_+)^2}{2\rho\rho_+^2}+G(\rho)
=\frac{1}{2}K(\partial_x\rho)^2\geq 0.
\end{equation}
Letting $x\to \pm\infty$, from the sign condition we find again \eqref{condj}
\begin{equation}\label{condspeed}
\frac{j^2}{\rho_\pm^3}=\frac{(v_\pm-c)^2}{\rho_\pm}\leq g'(\rho_\pm).
\end{equation}
This inequality gives a geometric interpretation of \eqref{condf}, that we rewrite
\begin{equation*}
g(\rho_-)=\frac{-j^2}{2\rho_-^2}+q,\ g(\rho_+)=\frac{-j^2}{2\rho_+^2}+q,
\end{equation*}
meaning that $\rho_\pm$ are intersection points of the curves $g, -j^2/2\rho^2+q$, and conditions 
\eqref{condj} mean that the curves  intersect transversally at $\rho_\pm$. 
Condition \eqref{condF} means that 
the total signed area between the two curves from $\rho_-$ to $\rho_+$ must be zero. 
See figure \ref{figwaals}.
When $g$ follows a Van Der Waals law, such conditions can be met we refer to \cite{BDD2} for some relevant examples.
\begin{figure}[h]\label{figwaals}
\begin{center}
 \includegraphics[scale=0.5]{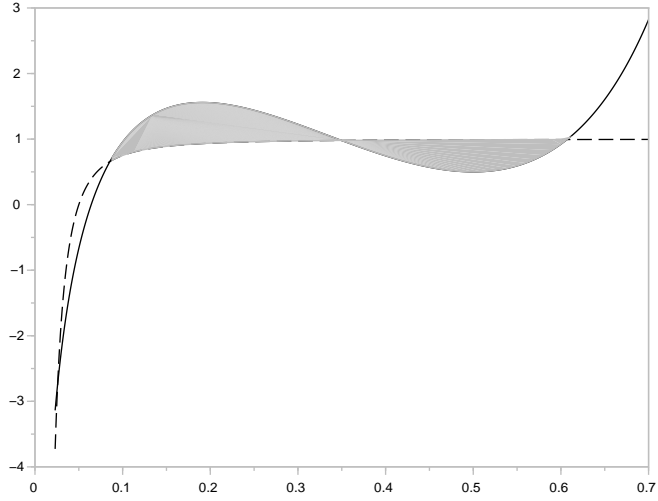}
 \end{center}
 \caption{Full line $g$, dashed line $-j^2/(2\rho^2)+q$, the two shaded areas should be equal.}
\end{figure}

\paragraph{The dimension of families of kinks.} There exists a kink provided equations \eqref{condf}, \eqref{condF} are met (\eqref{condj} is open and therefore plays no role 
for the dimension), namely
\begin{equation*}
 f(\rho_+)=f(\rho_-)=\int_{\rho_-}^{\rho_+}f(\rho)d\rho=0,\text{ where $f$ depends on }\rho_\pm,c,j,q.
\end{equation*}
Consider the application $\varphi:(\rho_\pm,j,q,c)\to \begin{pmatrix} f(\rho_+)\\f(\rho_-)\\
\int_{\rho_-}^{\rho_+}f(\rho)d\rho\end{pmatrix}$, we have 
\begin{eqnarray*}
D\varphi &=&
\begin{pmatrix}
-j^2/\rho_-^3+g'(\rho_-) & 0 & j/\rho_-^2 & -1 & 0\\
0 & -j^2/\rho_+^3+g'(\rho_+) & j/\rho_+^2 & -1 & 0\\
-f(\rho_-) & f(\rho_+) & \frac{j^2}{2}(1/\rho_--1/\rho_+) & \rho_--\rho_+ & 0
\end{pmatrix}\\
&=&\begin{pmatrix}
-j^2/\rho_-^3+g'(\rho_-) & 0 & j/\rho_-^2 & -1 & 0\\
0 & -j^2/\rho_+^3+g'(\rho_+) & j/\rho_+^2 & -1 & 0\\
0 & 0 & \frac{j^2}{2}(1/\rho_--1/\rho_+) & \rho_--\rho_+ & 0
\end{pmatrix}.
\end{eqnarray*}
According to the sign condition \eqref{condj}, in the generic case 
$-j^2/\rho_\pm^3+g'(\rho_\pm)>0$, so the rank of the matrix is three and the
kinks form a manifold of dimension two.

\paragraph{The case of solitons} Kinks can not provide
a nontrivial soliton in the limit $\rho_-\to \rho_+$, indeed kinks are monotonous 
therefore the ``soliton'' limit of a kink is actually a constant solution.
Nevertheless, the construction of solitons follows the same lines. We denote $\rho_+
=\lim_{\pm \infty}\rho$.
Since $g$ is a primitive of $g'$, we can assume $g(\rho_+)=0$. 
Equation \eqref{condf} gives
\begin{equation*}
f(\rho_+)=\frac{j^2}{2\rho_+^2}-q=0\Rightarrow f(\rho)=\frac{j^2}{2\rho^2}
-\frac{j^2}{2\rho_+^2}+g(\rho).
\end{equation*}
Then \eqref{condF} is free so 
\begin{equation}\label{intmom2}
\frac{1}{2}K(\rho')^2=\frac{-j^2}{2\rho\rho_+^2}(\rho_+-\rho)^2+G(\rho)
=\frac{-(c-v_+)^2}{2\rho}(\rho_+-\rho)^2+G(\rho).
\end{equation}
For $j^2<\rho_+^3g'(\rho_+)\Leftrightarrow (v_+-c)^2<\rho_+g'(\rho_+)$, we can 
define 
$$\di\rho_m=\sup\bigg\{\rho<\rho_+:\ \frac{-j^2}{2\rho\rho_+^2}(\rho_+-\rho)^2
+G(\rho)=0\bigg\}.$$
From basic ODE arguments, there exists a homoclinic orbit to $\rho_+$ with minimal 
value $\rho_m$; a ``bubble'' decreasing from $\rho_+$ to $\rho_m$ then increasing back 
to $\rho_+$. 
\begin{rmq}\label{compaspeed}
We recall that a kink of speed $c_k$ and right endstate $(\rho_+,v_+)$ 
satisfies $(v_+-c_k)^2<\rho_+g'(\rho_+)$, and since its construction depends 
on $(v_+-c_k)^2$ rather than $v_+-c_k$, we may assume $c_k-v_+\geq 0$.
In particular since there exists solitons of speed $c_s$ with $(v_+-c_s)^2$ 
arbitrarily close to $\rho_+g'(\rho_+)$, there always exists solitons faster than the kink and sharing the same endstate. 
\end{rmq}

\paragraph{Existence of kink-stable solitons configuration}
According to remark \ref{compaspeed}, given a kink with right endstate $(\rho_+,v_+)$, 
there exists solitons with same endstate and larger speed satisfying $c-v_+>0$. We are left to check wether such solitons are stable.\\
We assume here that the asymptotic state 
$(\rho_+,v_+)$ is fixed, so that solitons only depend on the speed $c$,
and we also assume $g''(\rho_+)\geq 0$ (this is true for the Van Der Waals case).\\
For consistency, we first prove that the stability condition $dP/dc<0$ 
is indeed equivalent to the stability condition of Benzoni et 
al\cite{BDD2}. To do so, we recall 
the definition of momentum of instability from \cite{BDD2}.
The equations satisfied by a soliton are
\begin{equation*}
\left\{
\begin{array}{lll}
 -c(v-v_+)+v^2/2+g(\rho)-K\partial_x^2\rho-\frac{1}{2}K'(\partial_x\rho)^2&=&
 v_+^2/2+g(\rho_+),\\
  -c(\rho-\rho_+)+\rho v&=&\rho_+v_+.
\end{array}
\right.
\end{equation*}
Defining $\di H=\frac{1}{2}\int \rho v^2-\rho_+v_+^2+K(\partial_x\rho)^2+2G(\rho)dx$, and
recalling $P=\int (\rho-\rho_+) (v-v_+)$, they can be expressed in an abstract way 
\begin{equation}\label{abstract}
\delta H-c\delta P=\big(u_+^2/2+g(\rho_+)\big)\delta P_1+\rho_+v_+\delta P_2:=\lambda_1\delta P_1+\lambda_2\delta P_2,
\end{equation}
where $P_1=\int \rho-\rho_+dx,\ P_2=\int v-v_+dx$.
The momentum of instability is then 
\begin{equation}
m(c)=H-cP-\lambda_1 P_1-\lambda_2P_2,
\end{equation}
and the stability condition of \cite{BDD2} is $m''(c)>0$. 
\begin{lemma}
 The condition $m''(c)>0$ is equivalent to 
 \begin{equation}\label{condstab}
 \frac{dP}{dc}=
  \frac{d}{dc}\int_\R \frac{(\rho-\rho_+)^2}{\rho}(c-v_+)dx<0.
 \end{equation}
\end{lemma}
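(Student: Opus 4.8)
The plan is to exploit the variational characterisation \eqref{abstract}: the soliton $V^c$ is, for each $c$, a critical point of the functional $\mathcal{L}_c := H - cP - \lambda_1 P_1 - \lambda_2 P_2$, i.e. $\delta H[V^c] - c\delta P[V^c] = \lambda_1\delta P_1 + \lambda_2\delta P_2$, where $\lambda_1 = v_+^2/2 + g(\rho_+)$ and $\lambda_2 = \rho_+v_+$ do \emph{not} depend on $c$ because the endstate $(\rho_+,v_+)$ is frozen throughout this discussion. By definition $m(c) = \mathcal{L}_c(V^c)$, so computing $m''$ reduces to differentiating this composition twice in $c$.

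First I would differentiate $m$ once. Since $c\mapsto V^c$ is smooth with locally uniform exponential decay of the profiles (so that differentiation under the integral sign is licit and $\partial_c V^c\in\mathcal{H}$), the chain rule together with the splitting of the $c$-dependence into its explicit part (the term $-cP$) and its implicit part (evaluation at $V^c$) gives
\begin{equation*}
m'(c) = \langle \delta H[V^c] - c\delta P[V^c] - \lambda_1\delta P_1[V^c] - \lambda_2\delta P_2[V^c],\ \partial_c V^c\rangle - P(V^c) = -P(V^c),
\end{equation*}
the scalar product vanishing exactly by \eqref{abstract}. Differentiating once more yields $m''(c) = -\frac{d}{dc}P(V^c)$, hence $m''(c) > 0$ if and only if $\frac{dP}{dc} < 0$.

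To identify this with the concrete condition \eqref{condstab} I would use the first integral of the mass equation, $\rho v - \rho_+v_+ = c(\rho - \rho_+)$, which rearranges as $\rho(v - v_+) = (c - v_+)(\rho - \rho_+)$, so that $v - v_+ = (c - v_+)(\rho - \rho_+)/\rho$. Substituting into $P(V^c) = \int_\R (\rho - \rho_+)(v - v_+)\,dx$ gives $P(V^c) = (c - v_+)\int_\R \frac{(\rho - \rho_+)^2}{\rho}\,dx$, and therefore $m''(c) > 0 \iff \frac{d}{dc}\int_\R \frac{(\rho - \rho_+)^2}{\rho}(c - v_+)\,dx < 0$, which is precisely \eqref{condstab}.

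There is essentially no serious obstacle: this is the familiar identity from the Grillakis–Shatah–Strauss framework ($d'(c) = -$ charge). The only points that need a little care are that the functionals $H,P,P_1,P_2$ are finite on $V^c$ — which follows from the exponential convergence of the profile to $(\rho_+,v_+)$ and the normalisations $g(\rho_+) = 0$, $G(\rho_+) = 0$ — and that $\lambda_1,\lambda_2$ are genuinely independent of $c$, which is why the endstate must be kept fixed in this part of the appendix.
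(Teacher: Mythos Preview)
Your proof is correct and follows essentially the same route as the paper: both compute $m'(c)=-P$ via the critical-point equation \eqref{abstract} (the paper writes the chain rule more tersely as $H'-cP'-P-\lambda_1P_1'-\lambda_2P_2'=-P$), and both then rewrite $P$ using the mass first integral $\rho(v-c)=\rho_+(v_+-c)$. There is no substantive difference.
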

\begin{proof}
Denote $'$ the derivative with respect to $c$, using \eqref{abstract} we have
\begin{equation}
m'(c)=H'-cP'-P-\lambda_1P'_1-\lambda_2P'_2=-P,
\end{equation}
We differentiate again and use the identity  $\rho(v-c)=\rho_+(v_+-c)$
\begin{eqnarray*}
m''(c)=-P' &=&-\frac{d}{dc}\int_\R (\rho-\rho_+)(v-v_+)dx\\
&=&-\frac{d}{dc}\int_\R \frac{(\rho-\rho_+)^2}{\rho}(c-v_+)dx.
\end{eqnarray*}
The condition $m''>0$ gives the expected result.
\end{proof}
The so-called transonic limit corresponds to $j^2/\rho_+^2=(v_+-c)^2\to 
\rho_+g'(\rho_+),$ so we set $j^2=\rho_+^3g'(\rho_+)(1-\varepsilon)$. From numerical 
computations it was conjectured in \cite{BDD2} that solitons are stable in the transonic 
limit, and this is rigorously proved with the following result.
As it gives the existence of stable solitons with 
speed arbitrarily close to $\sqrt{\rho_+g'(\rho_+)}$, it also 
provides the existence of kink-stable soliton configurations.
\begin{lemma}
For $\varepsilon$ small enough and $g''(\rho_+)>0$, 
``bubble'' solitons of speed $\sqrt{\rho_+g'(\rho_+)(1-\varepsilon)}$
are stable.
\end{lemma}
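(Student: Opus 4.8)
The plan is to compute the rescaled momentum $P$ in the transonic limit and read off the sign of $dP/dc$ via \eqref{condstab}. Normalise $g(\rho_+)=G(\rho_+)=0$, take $c-v_+>0$ (see remark \ref{compaspeed}), and set $j^2=\rho_+^3g'(\rho_+)(1-\varepsilon)$, so that $(c-v_+)^2=\rho_+g'(\rho_+)(1-\varepsilon)$ and $\varepsilon\mapsto c$ is smooth with $dc/d\varepsilon=-\rho_+g'(\rho_+)/\big(2(c-v_+)\big)<0$. Since the preceding lemma identifies the stability condition $dP/dc<0$ with $m''(c)>0$, it suffices to prove $dP/d\varepsilon>0$ for $\varepsilon$ small. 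From the first integral \eqref{intmom2}, on each monotone branch of the bubble $\sqrt{K(\rho)}\,\rho'=\mp\sqrt{2F(\rho)}$ with $F(\rho)=G(\rho)-\frac{(c-v_+)^2}{2\rho}(\rho_+-\rho)^2$; using the symmetry of the profile and the identity $(\rho-\rho_+)(v-v_+)=(c-v_+)(\rho-\rho_+)^2/\rho$ this gives
\begin{equation*}
P=2(c-v_+)\int_{\rho_m}^{\rho_+}\frac{(\rho-\rho_+)^2}{\rho}\,\frac{\sqrt{K(\rho)}}{\sqrt{2F(\rho)}}\,d\rho,
\end{equation*}
where $\rho_m=\rho_m(\varepsilon)$ is the zero of $F$ just below $\rho_+$.

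Expanding $F$ near $\rho_+$ with $s=\rho-\rho_+$ one gets $F(\rho_++s)=\tfrac12 g'(\rho_+)\,\varepsilon\,s^2+b(\varepsilon)s^3+O(s^4)$, with $b(0)=b_0:=\tfrac16 g''(\rho_+)+\tfrac{g'(\rho_+)}{2\rho_+}>0$ (positivity uses $g'(\rho_+)>0$, cf. \eqref{condj}, and $g''(\rho_+)>0$). Thus the bubble has amplitude of order $\varepsilon$, with $\rho_+-\rho_m(\varepsilon)=\tfrac{g'(\rho_+)}{2b_0}\varepsilon+O(\varepsilon^2)$, which suggests the Korteweg--de Vries rescaling $\rho=\rho_++\varepsilon\sigma$. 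Under it $F(\rho_++\varepsilon\sigma)=\varepsilon^3\Phi(\sigma,\varepsilon)$ with $\Phi$ smooth in $(\sigma,\varepsilon)$, $\Phi(\sigma,0)=\sigma^2\big(\tfrac12 g'(\rho_+)+b_0\sigma\big)$, and $\sigma_m(\varepsilon):=(\rho_m(\varepsilon)-\rho_+)/\varepsilon\to-g'(\rho_+)/(2b_0)$. Substituting into the integral above,
\begin{equation*}
P=2(c-v_+)\,\varepsilon^{3/2}\,I(\varepsilon),\qquad I(\varepsilon)=\int_{\sigma_m(\varepsilon)}^{0}\frac{\sigma^2\sqrt{K(\rho_++\varepsilon\sigma)}}{(\rho_++\varepsilon\sigma)\sqrt{2\Phi(\sigma,\varepsilon)}}\,d\sigma,
\end{equation*}
and in the stretched space variable the profile converges to the classical soliton bump, so a direct computation gives $I(0)>0$.

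To finish, one differentiates in $\varepsilon$. Once $I\in C^1$ near $0$ (see below), then since $c-v_+$ is smooth in $\varepsilon$ one has $P=2(c-v_+)\varepsilon^{3/2}\big(I(0)+O(\varepsilon)\big)$ and
\begin{equation*}
\frac{dP}{d\varepsilon}=3\sqrt{\rho_+g'(\rho_+)}\,I(0)\,\varepsilon^{1/2}+o(\varepsilon^{1/2})>0
\end{equation*}
for $\varepsilon$ small; combined with $dc/d\varepsilon<0$ this yields $dP/dc<0$, i.e. the stability condition.

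The delicate point is the regularity of $I$ at $\varepsilon=0$: its integrand is singular at $\sigma=0$ and at the moving endpoint $\sigma=\sigma_m(\varepsilon)$, so naive differentiation fails (the boundary term involves the integrand evaluated where $\Phi$ vanishes). I would resolve this by writing $\Phi(\sigma,\varepsilon)=\sigma^2\big(\sigma-\sigma_m(\varepsilon)\big)R(\sigma,\varepsilon)$, with $\sigma_m(\varepsilon)$ smooth by the implicit function theorem and $R$ smooth and positive on the relevant range for $\varepsilon$ small (because $\sigma_m(\varepsilon)$ is a simple zero of $\Phi(\cdot,\varepsilon)/\sigma^2$), and then substituting $\sigma=\sigma_m(\varepsilon)+t^2$; after this the integrand is jointly smooth in $(t,\varepsilon)$ over the interval $t\in[0,\sqrt{-\sigma_m(\varepsilon)}]$ whose endpoints depend smoothly on $\varepsilon$, so $I\in C^1$ by differentiation under the integral sign, and $I(0)>0$ since the integrand is nonnegative and not identically zero. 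The auxiliary checks — the non-vacuum condition $\inf\rho^c>0$ and the absence of other zeros of $F$ on $(\rho_m,\rho_+)$ — are immediate in this regime since the whole profile lies in an $O(\varepsilon)$ neighbourhood of $\rho_+$.
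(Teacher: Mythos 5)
Your proof is correct and follows essentially the same route as the paper: expand $F$ near $\rho_+$ in the transonic limit, factor out the double zero at $\rho_+$ and the simple zero at $\rho_m$, rescale to show $P=\mathrm{const}\cdot\varepsilon^{3/2}(I(0)+o(1))$ with $I(0)>0$, and differentiate. The only differences are cosmetic (you parametrize by $\varepsilon$ where the paper uses the amplitude $\delta=\rho_+-\rho_m$, the two being smoothly equivalent) plus your more careful justification, via the substitution $\sigma=\sigma_m+t^2$, of differentiating the singular integral — a point the paper passes over quickly.
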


\begin{proof} The condition $dP/dc<0$ is equivalent to 
$dP/d\varepsilon>0$ and 
equation \eqref{intmom2} reads 
\begin{eqnarray*}
\frac{1}{2}K(\partial_x\rho)^2&=&(\rho-\rho_+^2)\bigg(\frac{g'(\rho_+)}{2}+\frac{g''(\rho_+)(\rho-\rho_+)}{6}
-\frac{\rho_+g'(\rho_+)(1-\varepsilon)}{2\rho}+O(\rho-\rho_+)^2\bigg)\\
&=&(\rho-\rho_+)^2\bigg(\frac{\varepsilon\rho_+g'(\rho_+)}{2\rho}+
\bigg(\frac{g''(\rho_+)}{6}+\frac{g'(\rho_+)}{2\rho}\bigg)(\rho-\rho_+)+O(\rho-\rho_+)^2\bigg)\\
&:=&\frac{\rho_+g'(\rho_+)}{2\rho}(\rho-\rho_+)^2\big(\varepsilon+\alpha(\rho-\rho_+)\big)+O(\rho-\rho_+)^4.
\end{eqnarray*}
Note that $\alpha>0$, in the limit $\varepsilon\to 0+$, solitons have an amplitude $\rho_+-\rho_m\sim \varepsilon/\alpha\to 0$, where 
$\rho_m(\varepsilon)$ is the minimum of $\rho$, and in this regime 
$\rho_m'(\varepsilon)<0$.
Up to translation, we can assume that the minimum of $\rho$ is reached at $x=0$, and $\rho$ is strictly decreasing on $(-\infty,0)$.
Using on $x\in (0,\infty)$ 
the change of variable $\rho(x)=\rho,\ dx=\sqrt{\frac{K}{2F}}d\rho$ we find 
\begin{equation}
 P=2\int_{\rho_m}^{\rho_+}\frac{(\rho-\rho_+)^2(c-v_+)}{\rho}\sqrt{\frac{K}{2F}}d\rho.
\end{equation}
As is expectable, the situation is somewhat degenerate at $\varepsilon=0$, as one can 
check that $P(c)=P(\sqrt{\rho_+g'(\rho_+)(1-\varepsilon)})=O(\varepsilon^{3/2})$.
This is handled by a factorization of $F$ (see \eqref{integrho}): 
\begin{equation*}
F=\frac{\rho_+g'(\rho_+)(\rho-\rho_+)^2}{2\rho}\bigg(\varepsilon+\frac{2\rho G
}{\rho_+g'(\rho_+)
(\rho-\rho_+)^2 }-1\bigg):=
\frac{\rho_+g'(\rho_+)(\rho-\rho_+)^2}{2\rho}\big(\varepsilon+H(\rho)\big).
\end{equation*}
Here $H(\rho_+)=0$ and by construction $H(\rho_m(\varepsilon))+\varepsilon=0$. 
The condition $\alpha>0$ implies $H'(\rho_+)>0$, so 
$\varphi(\rho,\varepsilon):=(H+\varepsilon)/(\rho-\rho_m)$ is well defined 
near $(\rho,\varepsilon)=(\rho_+,0)$, smooth and does not cancel. To summarize,
$
F=\frac{\rho_+g'(\rho_+)(\rho-\rho_+)^2(\rho-\rho_m)}{2\rho}\varphi(\rho,\varepsilon)
$. Denoting $\delta(\varepsilon)=\rho_+-\rho_m$, we use the change of variables 
$\rho=\rho_+-\delta r$ : 
\begin{eqnarray*}
 P&=&2\int_{0}^{1}\frac{\delta^2r^2(c-v_+)}{\rho}\sqrt{\frac{\rho K}{\rho_+g'(\rho_+)\delta^2r^2\delta(1-r)\varphi(r,\varepsilon)}}\delta dr\\
&=& 2\int_{0}^{1}\frac{\delta^{3/2}r(c-v_+)}{\rho}\sqrt{\frac{\rho K}{\rho_+g'(\rho_+)(1-r)\varphi(r,\varepsilon)}}dr.
\end{eqnarray*}
From $\rho_m'(\varepsilon)<0$, $\varepsilon\to \delta(\varepsilon)$ is locally invertible 
and the stability condition is equivalent to $dP/d\delta>0$, but it is clear from the 
formula that 
$$dP/d\delta=\frac{3\delta^{1/2}}{2}\int_0^1\frac{r(c-v_+)}{\rho}
\sqrt{\frac{\rho K}{\rho_+g'(\rho_+)(1-r)\varphi(r,\varepsilon)}} dr+
O(\delta^{3/2}),
$$
which is positive for $\delta$ small enough.
\end{proof}
\bibliography{biblio}
 \bibliographystyle{plain}
\end{document}